\numberwithin{equation}{section}
\theoremstyle{theorem}
\newtheorem{thm}{Theorem}[section]
\newtheorem{cor}[thm]{Corollary}
\newtheorem{lem}[thm]{Lemma}
\newtheorem{prop}[thm]{Proposition}
\theoremstyle{definition}
\newtheorem{defn}[thm]{Definition}
\newtheorem{rem}[thm]{Remark}
\newtheorem{notation}[thm]{\rm\bfseries{Notation}}
\numberwithin{equation}{section}
          \newcommand{\nc}{\newcommand}
          \nc{\DMO}{\DeclareMathOperator}	
          \nc{\commentout}[1]{}
          \nc{\newnotation}{\nomenclature}
          \nc{\wrap}{\cW}
          \nc{\Tw}{\mathsf{Tw}}
          \nc{\loc}{\mathsf{Loc}}
          \nc{\Top}{Top}
          \nc{\emb}{\mathsf{emb}}
          \nc{\ind}{\mathsf{Ind}}
          \nc{\Ind}{\mathsf{Ind}}
          \nc{\Loc}{\mathsf{Loc}}
          \nc{\Cob}{\mathsf{Cob}}
          \nc{\mul}{\mathsf{Mul}}
          \nc{\fat}{\mathsf{fat}}
          \nc{\cob}{\mathsf{Cob}}
          \nc{\coh}{\mathsf{Coh}}
          \nc{\Liouaut}{\Aut_{\mathsf{Liou}}}
          \nc{\idem}{\mathsf{Idem}}
          \nc{\sets}{\mathsf{Sets}}
          \nc{\near}{\mathsf{near}}
          \nc{\sing}{\mathsf{Sing}}
          \nc{\Sing}{\mathsf{Sing}}
          \nc{\perf}{\mathsf{Perf}}
          \nc{\block}{\mathsf{block}}
          \nc{\ssets}{\mathsf{sSets}}
          \nc{\cmpct}{\mathsf{cmpct}}
          \nc{\compact}{\mathsf{cmpct}}
          \nc{\pwrap}{\mathsf{PWrap}}
          \nc{\coder}{\mathsf{Coder}}
          \nc{\bimod}{\mathsf{Bimod}}
          \nc{\grmod}{\mathsf{GrMod}}
          \nc{\Morita}{\mathsf{Morita}}
          \nc{\morita}{\mathsf{Morita}}
          \nc{\spaces}{\mathsf{Spaces}}
          \nc{\pwrms}{\mathsf{PWrFuk}_{M,S}}
          \nc{\pwrmf}{\mathsf{PWrFuk}_{M,F}}
          \nc{\pwrapmf}{\mathsf{PWrFuk}_{M,F}}
          \nc{\fuk}{\mathsf{Fukaya}}
          \nc{\infwr}{\mathsf{InfWr}}
          \nc{\fukaya}{\mathsf{Fukaya}}
          \nc{\autml}{\mathsf{Aut}_{M,\Lambda}}
          \nc{\fukml}{\mathsf{Fukaya}_{M,\Lambda}}
          \nc{\fukmle}{\mathsf{Fukaya}_{M,\Lambda,\epsilon}}
          \nc{\fukmod}{\wrfukcompact(M)\modules}
          \nc{\lag}{\mathsf{Lag}}
          \nc{\lagm}{\lag_M}
          \nc{\lago}{\lag^o}
          \nc{\lagml}{\lag_{M,\Lambda}} 
          \nc{\lagmle}{\lag_{M,\Lambda,\epsilon}}
          \nc{\Fun}{\mathsf{Fun}}
          \nc{\fun}{\mathsf{Fun}}
          \nc{\vect}{\mathsf{Vect}}
          \nc{\chain}{\mathsf{Chain}}
          \nc{\chainn}{Chain}
          \nc{\wrfuk}{\mathsf{WrFukaya}}
          \nc{\wrfukcompact}{\mathsf{WrFukaya}_{\mathsf{cmpct}}}
          \nc{\pwrfuk}{\mathsf{PWrFukaya}}
          \nc{\inffuk}{\mathsf{InfFuk}}
          \nc{\pwrfukml}{\mathsf{PWrFukaya}_{M,\Lambda}}
          \nc{\inffukml}{\mathsf{InfFuk}_{M,\Lambda}}
          \nc{\nattrans}{\mathsf{NatTrans}}
          \nc{\corres}{\mathsf{Corres}}
          \nc{\fukep}{\fukaya_\Lambda(M,\epsilon)}
          \nc{\fukepop}{\fukaya_\Lambda(M,\epsilon)^{\op}}
          \nc{\lagep}{\lag_\Lambda(M,\epsilon)}
          \DMO{\cyl}{cyl} 
          \nc{\dbcoh}{D^b\mathsf{Coh}}
          \nc{\corr}{\mathsf{Corr}}
          \nc{\Liouauto}{{\Aut^o}}
          \nc{\Liouautb}{\Aut^{b}}
          \nc{\Liouautgr}{{\Aut^{gr}}}
          \nc{\Liouautgrb}{\Aut^{gr,b}}
          \nc{\Fuk}{\mathsf{Fuk}}
          \DMO{\im}{im}
          \DMO{\ev}{ev}
          \DMO{\stable}{Ex}
          \DMO{\inj}{inj}
          \DMO{\fib}{fib}
          \DMO{\conf}{Conf}
          \DMO{\chains}{Chains}
          \DMO{\cochains}{Cochains}
          \DMO{\cone}{Cone}
          \DMO{\Map}{Map}
          \DMO{\ran}{Ran}
          \DMO{\rot}{Rot}
          \DMO{\leg}{Leg}
          \DMO{\imm}{imm}
          \DMO{\adj}{adj}
          \DMO{\symp}{Symp}
          \DMO{\tree}{Tree}
          \DMO{\cube}{Cube}
          \DMO{\deep}{deep}
          \DMO{\back}{back}
          \DMO{\Hoch}{Hoch}
          \DMO{\front}{front}
          \DMO{\flow}{Flow}
          \DMO{\floer}{Floer}
          \DMO{\Maps}{Maps}
          \DMO{\exact}{exact}
          \DMO{\excess}{Excess}
          \DMO{\Decomp}{Decomp}
          \DMO{\decomp}{Decomp}
          \DMO{\collar}{collar}
          \DMO{\yoneda}{Yoneda}
          \DMO{\hamspace}{Ham}
          \DMO{\sympspace}{Symp}
          \DMO{\holomaps}{Holomaps}
          \DMO{\comp}{Comp}
          \DMO{\crit}{Crit}
          \DMO{\test}{{test}}
          \DMO{\sign}{sign}
          \DMO{\topp}{top}
          \DMO{\indx}{Index}
          \DMO{\Break}{Break} 
          \DMO{\zero}{zero} 
          \DMO{\ob}{Ob}
          \DMO{\gr}{Gr} 
          \DMO{\Gr}{Gr} 
          \DMO{\cl}{Cl} 
          \DMO{\grlag}{GrLag}
          \DMO{\Pin}{Pin}
          \DMO{\Graph}{Graph}
          \DMO{\pin}{Pin}
          \DMO{\gap}{Gap}
          \DMO{\Ex}{Ex}
          \DMO{\id}{id}
          \DMO{\End}{End}
          \DMO{\sym}{Sym}
          \DMO{\aut}{aut}
          \DMO{\Aut}{Aut}
          \DMO{\haut}{hAut}
          \DMO{\hAut}{hAut}
          \DMO{\DK}{DK} 
          \DMO{\poly}{poly} 
          \DMO{\diff}{Diff}
          \DMO{\coll}{coll}
          \DMO{\dist}{dist} 
          \DMO{\coker}{coker} 
          \nc{\kernel}{\ker} 
          \DMO{\sspan}{span}
          \DMO{\hocolim}{hocolim}	
          \DMO{\holim}{holim}
          \DMO{\sk}{sk}
          \DMO{\ho}{ho}
          \DMO{\fin}{fin}
          \DMO{\tor}{Tor}
          \DMO{\ext}{Ext}
          \DMO{\ret}{Ret}
          \DMO{\ham}{Ham}
          \DMO{\con}{con}
          \DMO{\leaf}{leaf}
          \DMO{\supp}{supp}
          \DMO{\edge}{edge}
          \DMO{\colim}{colim}
          \DMO{\edges}{edges}
          \DMO{\Image}{image}
          \DMO{\roots}{roots}
          \DMO{\height}{height}
          \DMO{\finmod}{FinMod}
          \DMO{\leaves}{leaves}
          \DMO{\planar}{planar}
          \DMO{\vertices}{vertices}
\nc{\norm}[2]{{ \ensuremath{\|} #1 \ensuremath{\|}}_{#2}}
\nc{\Dbar}[1]{\ensuremath{{\bar{\partial}}_{#1}}}
\nc{\Ce}{\ensuremath{\mathbb{C}}}
\nc{\B}{\ensuremath{\mathbb{B}}}
\nc{\osc}{\operatorname{osc}}
\nc{\leng}{\operatorname{leng}}
          \nc{\lagg}{\lag^{\cG}}
          \nc{\iso}{\mathsf{Iso}}
          \nc{\Set}{\mathsf{Set}}
          \nc{\Ass}{\mathsf{ \bf Ass}}
          \nc{\Mod}{\mathsf{Mod}}
          \nc{\modules}{\mathsf{Mod}}
          \nc{\sset}{\mathsf{sSet}}
          \nc{\liou}{\mathsf{Liou}}
          \nc{\poset}{\mathsf{Poset}}
          \nc{\trno}{T^*\RR^n_{\geq 0}}
          \nc{\spectra}{\mathsf{Spectra}}
          \nc{\tensorfin}{\tensor^{\fin}}
          \nc{\lagptg}{\lag_{pt,pt}^{\cG}}
          \nc{\Fin}{\mathcal{F}\mathsf{in}}
          \nc{\lagnl}{\lag_{N,\Lambda}}
          \nc{\lagmlg}{\lag_{M,\Lambda}^{\cG}}
          \nc{\lagsplit}{\lag^{\mathsf{split}}}
          \nc{\lagktimes}{(\lag^{\dd k})^\times}
          \nc{\lagplanar}{\lag^{\times,\planar}}
          \nc{\Cont}{\text{\rm Cont}}
          \nc{\Ham}{\text{\rm Ham}}
          \nc{\Dev}{\text{\rm Dev}}
          \nc{\Lin}{\text{\rm Lin}}
          \nc{\Int}{\text{\rm Int}}
          \nc{\Hom}{\text{\rm Hom}}
          \nc{\Chord}{\text{\rm Chord}}
          \nc{\nbhd}{\mathcal{N}\text{\rm{bhd}}}
          \nc{\onef}{1_{\fukaya}}
          \nc{\smsh}{\wedge}
          \nc{\un}{\underline}
          \nc{\xto}{\xrightarrow}
          \nc{\xra}{\xto}
          \nc{\tensor}{\otimes}
          \nc{\del}{\partial}
          \nc{\dd}{\diamond}
          \nc{\tri}{\triangle}
          \nc{\CB}{\Box}
          \nc{\into}{\hookrightarrow}
          \nc{\onto}{\twoheadrightarrow}
          \nc{\contains}{\supset}
          \nc{\transverse}{\pitchfork}
          \nc{\uncirc}{\underline{\circ}}
          \nc{\thetacontact}{\theta} 
          \nc{\Jbar}{\overline{J}}
          \nc{\Fbar}{\overline{F}}
          \nc{\delbar}{\overline{\del}}
          \nc{\thetabar}{\overline{\theta}}
          \nc{\omegabar}{\overline{\omega}}
          \nc{\Liou}{\text{\rm Liou}}
          \nc{\Yhat}{\widehat{Y}}
          \nc{\Mliou}{M}
          \nc{\vece}{ {\vec \epsilon}}	
          \nc{\vecd}{ {\vec \delta}}
          \nc{\ov}{\overline}
          \DMO{\op}{op}
          \nc{\opp}{ ^{\op}}
          \nc{\hiro}{\textcolor{blue}}
          \nc{\YG}{\textcolor{orange}}
          \nc{\beastar}{\begin{eqnarray*}}
          \nc{\eeastar}{\end{eqnarray*}}
\numberwithin{equation}{section}
\def\R{{\mathbb R}}
\def\osc{{\hbox{\rm osc }}}
\def\Crit{{\hbox{Crit}}}
\def\E{{\mathbb E}}
\def\Z{{\mathbb Z}}
\def\C{{\mathbb C}}
\def\R{{\mathbb R}}
\def\P{{\mathbb P}}
\def\N{{\mathbb N}}
\def\11{{\mathbb I}}
\def\Jbar{{\widetilde J}}
\def\delbar{{\overline \partial}}
\def\C{\mathbb{C}}
\def\Z{\mathbb{Z}}
\def\Q{\mathbb{Q}}
\def\E{\ifmmode{\mathbb E}\else{$\mathbb E$}\fi} 
\def\N{\ifmmode{\mathbb N}\else{$\mathbb N$}\fi} 
\def\R{\ifmmode{\mathbb R}\else{$\mathbb R$}\fi} 
\def\Q{\ifmmode{\mathbb Q}\else{$\mathbb Q$}\fi} 
\def\C{\ifmmode{\mathbb C}\else{$\mathbb C$}\fi} 
\def\H{\ifmmode{\mathbb H}\else{$\mathbb H$}\fi} 
\def\Z{\ifmmode{\mathbb Z}\else{$\mathbb Z$}\fi} 
\def\P{\ifmmode{\mathbb P}\else{$\mathbb P$}\fi} 
\def\SS{\ifmmode{\mathbb S}\else{$\mathbb S$}\fi} 
\def\DD{\ifmmode{\mathbb D}\else{$\mathbb D$}\fi} 
\def\R{{\mathbb R}}
\def\osc{{\hbox{\rm osc}}}
\def\Crit{{\hbox{Crit}}}
\def\E{{\mathbb E}}
\def\Z{{\mathbb Z}}
\def\C{{\mathbb C}}
\def\R{{\mathbb R}}
\def\N{{\mathbb N}}
\def\CJ{{\mathcal J}}
\def\FF{{\mathcal F}}
\def\Jbar{{\widetilde J}}
\def\delbar{{\overline \partial}}
\def\Ev{\text{\rm Ev}}
\def\b{\beta}
\def\e{\varepsilon}
\def\o{\omega}  
  \def\S{\Sigma}
\def\w{\varphi}
\def\CB{{\mathcal B}}
\def\CC{{\mathcal C}}
\def\CD{{\mathcal D}}
\def\CF{{\mathcal F}}
\def\CH{{\mathcal H}}
\def\CJ{{\mathcal J}}
\def\CL{{\mathcal L}}
\def\CM{{\mathcal M}}
\def\CW{{\mathcal W}}
\def\darr#1{\raise1.5ex\hbox{$\leftrightarrow$}
\mkern-16.5mu #1}
\def\roughly#1{\raise.3ex\hbox{$#1$\kern-.75em
\lower1ex\hbox{$\sim$}}}
\def\opname#1{\mathop{\kern0pt{\rm #1}}\nolimits}
\def\End{\opname{End}}
\def\dist{\opname{dist}}
\def\rank{\opname{rank}}
\def\supp{\operatorname{supp}}
\def\Dev{\operatorname{Dev}}
\def\leng{\operatorname{leng}}
\def\End{\operatorname{End}}
\def\Aut{\operatorname{Aut}}
\def\coker{\operatorname{Coker}}
\def\span{\operatorname{span}}
\def\Cont{\operatorname{Cont}}
\def\Crit{\operatorname{Crit}}
\def\Spec{\operatorname{Spec}}
\def\Sing{\operatorname{Sing}}
\def\GFQI{\frak{G}}
\def\Index{\operatorname{Index}}
\def\Image{\operatorname{Image}}
\def\ev{\operatorname{ev}}
\def\Int{\operatorname{Int}}
\def\ben{\begin{enumerate}}
\def\een{\end{enumerate}}
\def\be{\begin{equation}}
\def\ee{\end{equation}}
\def\bea{\begin{eqnarray}}
\def\eea{\end{eqnarray}}
\def\beastar{\begin{eqnarray*}}
\def\eeastar{\end{eqnarray*}}
\def\bc{\begin{center}}
\def\ec{\end{center}}
\renewcommand{\b}{\beta}
\begin{document}

\quad \vskip1.375truein

\def\mq{\mathfrak{q}}
\def\mp{\mathfrak{p}}
\def\mH{\mathfrak{H}}
\def\mh{\mathfrak{h}}
\def\ma{\mathfrak{a}}
\def\ms{\mathfrak{s}}
\def\mm\mm{\mathfrak{m}}
\def\mn{\mathfrak{n}}
\def\mz{\mathfrak{z}}
\def\mw{\mathfrak{w}}
\def\Hoch{{\tt Hoch}}
\def\mt{\mathfrak{t}}
\def\ml{\mathfrak{l}}
\def\mT{\mathfrak{T}}
\def\mL{\mathfrak{L}}
\def\mg{\mathfrak{g}}
\def\md{\mathfrak{d}}
\def\mr{\mathfrak{r}}
\def\Cont{\operatorname{Cont}}
\def\Crit{\operatorname{Crit}}
\def\Spec{\operatorname{Spec}}
\def\Sing{\operatorname{Sing}}
\def\GFQI{\text{\rm g.f.q.i.}}
\def\Index{\operatorname{Index}}
\def\Cross{\operatorname{Cross}}
\def\Ham{\operatorname{Ham}}
\def\Fix{\operatorname{Fix}}
\def\Graph{\operatorname{Graph}}
\def\id{\text\rm{id}}

\title[Evaluation transversality of contact instantons]
{Generic jet evaluation transversality of contact instantons against contact distribution}

\author{Yong-Geun Oh}
\address{Center for Geometry and Physics, Institute for Basic Science (IBS),
77 Cheongam-ro, Nam-gu, Pohang-si, Gyeongsangbuk-do, Korea 790-784
\& POSTECH, Gyeongsangbuk-do, Korea}
\email{yongoh1@postech.ac.kr}
\thanks{This work is supported by the IBS project \# IBS-R003-D1}


\begin{abstract} For a given coorientable contact manifold $(M,\Xi)$ with contact distribution $\Xi$, 
we consider its contact forms $\lambda$ with $\ker \lambda = \Xi$, and the associated contact triads
$(M,\lambda, J)$. For a generic choice of contact form $\lambda$, 
we  prove the (0-jet)  the interior and boundary evaluation maps,
and  the 1-jet transversality of contact instantons (against contact distribution, for example).
\end{abstract}

\keywords{}
\subjclass[2010]{Primary 53D42; Secondary 58B15}

\maketitle

\tableofcontents

\section{Introduction}

It has been well-known that the evaluation transversality is one of the fundamental 
problem together with the mapping transversality in the Gromov's theory of pseudoholomorphic
curves in symplectic topology and in the Gromov-Witten theory. The higher jet-evaluation
transversality problem has been systematically studied by the present author in \cite{oh:higher-jet}
in his attempt to establish the symplectic version of the \emph{ramified Gromov-Witten invariants}
defined by Kim, Kresch and the present author in algebraic geometry \cite{kim-kresch-oh}.
More recently, Wendl  beautifully applied this line of study in his proof of
the super rigidity of embedded pseudoholomorphic curves \cite{wendl:super-rigidity}.

In the present paper, we prove the generic 1-jet evaluation transversality of the moduli space
of contact instantons and its Hamiltonian-perturbed counterpart, which is one of the essential
ingredients needed in our proof of Weinstien's conjecture \cite{oh:weinstein-conjecture}.
The scheme of our proof is an enhancement of the \emph{canonical scheme} of
the  proof of generic $0$-jet evaluation transversality utilized in \cite{oh:contacton-transversality}
which is an adaptation of the canonical scheme utilized  in \cite{oh-zhu:ajm}, \cite{oh:higher-jet}
for the study of pseudoholomorphic curves in symplectic geometry.

In  \cite{oh:contacton}, the present author  initiated  the
Fredholm theory of contact instantons by deriving an explicit tensorial formula of the linearized operator
of the contact instantons, denoted by  $D\Upsilon(w)$  from now on, in \cite[Theorem 10.1]{oh:contacton}.
(See also  Theorem \ref{thm:linearization} in the present paper.)

We note that there are three kinds of perturbations we can think of as mentioned above, i.e., $J$, 
$\lambda$ and the boundary Legendrian submanifolds $R_i$'s. The study of perturbations of $J$ is given in
\cite{oh:contacton} which is of no change in the present open string case, and perturbation of
contact forms is largely subsumed into that of $J$. The Fredholm
theory for the bordered contact instantons has been then given in \cite{oh:contacton-transversality}.

We now formulate the precise statement of the 1-jet transversality of the derivative 
of the evaluation map $\ev^+:\widetilde \CM_{(0,1)} (\dot \Sigma,M;J) \to M$ given by
$\ev^+(w,z^+) = w(z^+)$ where the subindex $(k,\ell)$ stands for the number of 
boundary marked points $\{z_1, \cdots, z_k\}$ and that of interior marked points 
$\{z_1^+, \cdots, z_\ell^+\}$, respectively.
We will be particularly interested in the transversality result against the manifold
$$
\Xi \subset TM
$$
of the contact distributions. For the study, the following reformulation of this transversality
when a contact form $\lambda$ turns out to be useful.

\begin{lem}\label{lem:lambda-transversality}
Let $\lambda$ be a contact form of a contact manifold $(M,\Xi)$.
Consider $\varphi: N \to M$ be any smooth map. Then the differential
$$
d\varphi: TN \to TM
$$
is transversal to $\Xi$ if and only if the function $TN \to \R$
defined by $(x,v) \mapsto \lambda_x(d\varphi(v))$ has 0 as a regular value.
\end{lem}

The following moduli space with 1-jet constraint will play a crucial role in our
proofs of Weinstein' conjecture and Arnold's chord conjecture \cite{oh:weinstein-conjecture}.

\begin{defn}[Moduli space with $\Xi$-tangency condition]
We call  the 1-marked moduli space
$$
\CM_{(1,0)}^\Xi(\dot \Sigma, M;J)
= \left\{(w,z^+) \, \Big|\,  w \in \CM_{(0,1);[0,K_0]}(\dot \Sigma, M;J), 
\lambda(dw(z^+)) = 0, \, z^+ \in \dot \Sigma \right\}
$$
\emph{the moduli space with $\Xi$-tangency condition}.
\end{defn}

We take the union of  moduli spaces 
$$
\widetilde \CM_{(0,1)}^\Xi(\dot \Sigma, M) = \bigcup_{J \in \CJ_\lambda}
\{J\} \times \widetilde \CM_{(0,1)}^\Xi(\dot \Sigma,M;J)
$$
where $\operatorname{Aut}(\Sigma)$ acts on $((\Sigma,j),w)$ by conformal
reparameterization for any $j$.

The following is the main theorem of the present paper.

\begin{thm}[$1$-jet evaluation transversality]\label{thm:1-jet}
The derivative
$$
D(\ev^+):T \widetilde \CM_{(0,1)}(\dot \Sigma,M) \to \Lambda^1((\cdot)^*\Xi)
$$
is transverse to $\Lambda^1(w^*\Xi)$, where its fiberwise restriction
$$
D_{{(J,(j,w),z)} }(\ev^+): T_{(J,(j,w),z)} \widetilde \CM_{(0,1)}(\dot \Sigma,M) \to \Lambda_z^1(w^*TM)
$$
is the linear map defined by
\be\label{eq:Dev+}
D_{{(J,(j,w),z)} }(\ev^+): T_{(J,(j,w),z)}(B, (a,X), v): = \nabla_{dw(z^+)(v)} X + \nabla_v dw.
\ee
at each $(J,(j,w),z) \in \widetilde \CM_{(0,1)}(\dot \Sigma,M)$.
\end{thm}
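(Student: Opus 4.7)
I will adapt the canonical scheme of \cite{oh:contacton-transversality,oh:higher-jet} from $0$-jet to $1$-jet evaluation transversality. By applying Lemma \ref{lem:lambda-transversality} fiberwise inside $\Lambda^1_z(w^*TM)$, transversality of $D(\ev^+)$ to $\Lambda^1(w^*\Xi)$ is equivalent to surjectivity of the composition
\[
T_{(J,(j,w),z)}\widetilde\CM_{(0,1)}(\dot\Sigma,M) \xto{D(\ev^+)} \Lambda^1_z(w^*TM) \xto{\lambda} T^*_z\dot\Sigma \otimes \R,
\]
which via \eqref{eq:Dev+} sends $(B,(a,X),v)$ to the covector $v' \mapsto \lambda\bigl(\nabla_{dw(v')} X + \nabla_{v'} dw\bigr)$. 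I then equip $\CJ_\lambda$ with a Floer-$C^\epsilon$ Banach manifold structure and, via the Fredholm package of \cite{oh:contacton,oh:contacton-transversality}, construct $\widetilde\CM_{(0,1)}(\dot\Sigma,M)$ as a Banach manifold whose tangent space at $(J,(j,w),z)$ consists of tuples $(B,(a,X),v)$ with $B\in T_J\CJ_\lambda$, $a$ a Teichm\"uller variation, $X\in\Gamma(w^*TM)$ in the kernel of the coupled linearized contact instanton operator, and $v\in T_z\dot\Sigma$.

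\textbf{Main analytic step.} The core is to show that the composed covector-valued map above is surjective at each relevant $(J,(j,w),z)$. The $(a,X,v)$-variations with $B=0$ produce only a linear subspace of $T^*_z\dot\Sigma$ constrained by the linearized equation, so the remaining directions must be supplied by a $J$-variation $B\in T_J\CJ_\lambda$ supported in a small neighborhood of $w(z)$ with prescribed $1$-jet at $w(z)$. By the Fredholm alternative applied to $D\Upsilon(w)$, such $B$ lifts to a corresponding solution $X$ whose contribution $v'\mapsto \lambda(\nabla_{dw(v')} X)$ realizes an arbitrary prescribed covector in $T^*_z\dot\Sigma$; a somewhere-injectivity and unique-continuation argument as in \cite{oh:contacton-transversality} ensures this contribution is non-degenerate at the isolated point $z$. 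Once surjectivity is established, the zero locus of the universal functional is a Banach submanifold, and Sard--Smale applied to its projection to $\CJ_\lambda$ yields transversality for generic $J$.

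\textbf{Main obstacle.} The hardest case of this surjectivity is the $\Xi$-tangency locus $\{\lambda(dw(z^+))=0\}$, where the $0$-jet has degenerated and a first-order Reeb-component contribution must be extracted from $(X,B)$. Since $\lambda$ is held fixed, $B$ must preserve the decomposition $TM=\Xi\oplus\langle R_\lambda\rangle$ and therefore only perturbs $J|_\Xi$; reconciling this compatibility constraint with the freedom to prescribe an arbitrary $1$-jet at $w(z)$ is the technical heart of the enhancement, and is the feature distinguishing the contact setting from the symplectic setting of \cite{oh:higher-jet}.
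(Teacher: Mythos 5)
There is a genuine gap at the heart of your ``Main analytic step.'' You propose to achieve surjectivity onto the $1$-jet component by choosing a $J$-variation $B$ supported near $w(z)$ with prescribed $1$-jet and then ``lifting'' it, via the Fredholm alternative for $D\Upsilon(w)$, to a solution $X$ whose pointwise contribution $v'\mapsto\lambda(\nabla_{dw(v')}X)$ realizes an arbitrary covector at $z$. Two things fail here. First, the way a variation $L=B$ of $J$ enters the jet component is through the term $\frac12 L(w(z_0))\circ d^\pi w(z_0)\circ j_{z_0}$ (see \eqref{eq:formula}); on the degenerate stratum that matters for the $1$-jet statement this term vanishes identically (the paper notes $d^\pi w(z_0)=0$ there), and in general it is far from realizing an arbitrary element of $\Lambda^1_z(w^*\Xi)$ --- so local $J$-perturbations with prescribed $1$-jet do not directly move the jet evaluation at $z$. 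Second, the Fredholm alternative gives solvability of the linearized equation modulo a finite-dimensional cokernel, but it gives no control whatsoever of the $1$-jet of the resulting $X$ at the specific point $z$; producing that control is exactly the content of the theorem, so your argument is circular at this step. The unique-continuation/somewhere-injectivity input is also used in the paper for a different purpose than you assign it (to kill the annihilating functional away from $z_0$, not to make a ``contribution non-degenerate at $z$'').

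The paper's actual route is dual: one assumes a pair $((\eta,f),X_p)$ in the dual space annihilates the image of the linearization of $\aleph_1$, shows by setting $Y=0$ and using unique continuation (with the Reeb-chord asymptotics replacing somewhere injectivity) that $(\eta,f)$ is a distribution supported at $\{z_0\}$, then invokes the structure theorem for point-supported distributions (Lemma \ref{lem:gelfand}) together with $\eta\in W^{-1,q}$ to force $\eta=\beta_{z_0}\delta_{z_0}$ of order zero, and finally a coordinate/cutoff argument (Lemma \ref{lem:eta=01jet}) to show $(\eta,f)$ is a global distributional solution of the adjoint equation, hence smooth, hence zero, after which $X_p=0$ follows trivially; surjectivity then comes from density plus ellipticity, and Sard--Smale finishes as you say. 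Your proposal also omits the regularity bookkeeping of Remark \ref{rem:3pversus2p}: on the natural $W^{2,p}$ completion the $1$-jet evaluation is not even continuous in general, and it is only after restricting to variations with $b=0$, $v=0$ and using the explicit formula \eqref{eq:formula} (which involves only one derivative of $\xi$) that the map $\aleph_1$ and its linearization are well defined on the off-shell spaces --- without this the Banach-manifold setup you invoke does not exist in the regularity range you are working in. Finally, the ``main obstacle'' you identify (that $B$ preserves $TM=\Xi\oplus\langle R_\lambda\rangle$) is a real constraint but is not where the difficulty lies, and you do not indicate how to resolve it; the first-order Reeb-component information is extracted in the paper through the $\Upsilon_2$-component and the duality argument, not through $J$-perturbations.
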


The following corollary will be an important ingredient in \cite{oh:weinstein-conjecture}.

\begin{cor}\label{cor:MM-Xi-tangency} Let $\dot \Sigma = \R \times [0,1]$ and wrote
$$
D_t(\ev^+) : T \widetilde \CM_{(0,1)}(\dot \Sigma,M) \to TM
$$
defined by 
$$
D_t(\ev^+)(w,z^+) = dw\left(\frac{\del}{\del t}\right) = \frac{\del w}{\del t}\left(\tau, t\right).
$$
Then for a generic choice of $J$, the moduli space
\beastar 
&{}& 
(D_t(\ev^+))^{-1}(\Xi)  \\
& =  & \Big \{(w,z^+) \, \Big|\,  w \in \CM_{[0,K_0]}(\dot \Sigma, M;J), 
\lambda(dw(z^+)) \left(\frac{\del}{\del t} \right) = 0, \,
 \, z^+ = (\tau,t), \, \tau \in \R  \Big\} 
\eeastar
is a smooth manifold of dimension equal to that of $\CM_{[0,K_0]}(\dot \Sigma, M;J)$.
\end{cor}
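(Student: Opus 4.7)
The plan is to derive Corollary \ref{cor:MM-Xi-tangency} directly from Theorem \ref{thm:1-jet} by specializing the $1$-jet evaluation along the distinguished tangent direction $\partial/\partial t$ on the strip $\dot \Sigma = \R \times [0,1]$. By Lemma \ref{lem:lambda-transversality} applied to $\varphi = D_t(\ev^+)$, the preimage $(D_t(\ev^+))^{-1}(\Xi)$ is smooth precisely when the scalar function
$$
F(J, w, z^+) \;=\; \lambda_{w(z^+)}\!\bigl(dw(z^+)(\partial/\partial t)\bigr)
$$
has $0$ as a regular value on the universal moduli space $\widetilde \CM_{(0,1)}(\dot \Sigma, M)$. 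Thus the task reduces to establishing, for a generic $J \in \CJ_\lambda$, the transversality of $D_t(\ev^+)$ to the contact hyperplane $\Xi \subset TM$.

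To obtain this, I would factor $D_t(\ev^+)$ through the full $1$-jet evaluation of Theorem \ref{thm:1-jet}: one has
$$
D_t(\ev^+) \;=\; \mathrm{ev}_{\partial/\partial t} \circ D(\ev^+),
$$
where $\mathrm{ev}_{\partial/\partial t} \colon \Lambda_z^1(w^*TM) \to T_{w(z^+)}M$ denotes the surjective linear contraction of a $1$-form by the non-vanishing tangent vector $\partial/\partial t$. Since this contraction maps $\Lambda_z^1(w^*\Xi)$ onto $\Xi_{w(z^+)}$, the transversality of $D(\ev^+)$ to $\Lambda^1(w^*\Xi)$ furnished by Theorem \ref{thm:1-jet} pushes forward, by standard functoriality of transversality under a surjective linear map on the target, to transversality of $D_t(\ev^+)$ to $\Xi$. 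Applying the Sard--Smale theorem to the forgetful projection of the universal moduli space to $\CJ_\lambda$ then produces a Baire-generic subset of $J$ for which $(D_t(\ev^+))^{-1}(\Xi)$ is a smooth submanifold of codimension $\operatorname{codim}_{TM}\Xi = 1$ inside $\widetilde \CM_{(0,1)}(\dot \Sigma, M; J)$. Since the interior marked point $z^+ = (\tau, t)$ contributes one net extra degree of freedom over $\CM_{[0,K_0]}(\dot \Sigma, M; J)$ after accounting for the $\R$-translation symmetry of the strip, the resulting dimension matches $\dim \CM_{[0,K_0]}(\dot \Sigma, M; J)$, as asserted.

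The hard part will be the factoring step: one must verify that, after post-composing formula \eqref{eq:Dev+} for $D(\ev^+)$ with contraction by $\partial/\partial t$, the resulting linear map on infinitesimal variations $(B, (a, X), v)$ still surjects onto $T_{w(z^+)}M / \Xi_{w(z^+)}$, i.e., onto the Reeb-direction residue. This is essentially a linear-algebraic verification using that $\partial/\partial t$ is nowhere-zero on $\dot \Sigma$, so that evaluation of $1$-forms on $\partial/\partial t$ is pointwise surjective onto $T_{w(z^+)}M$, which in turn forces surjectivity modulo $\Xi$. Once this surjectivity is in hand, the remainder of the proof is a direct application of Theorem \ref{thm:1-jet} in its specialized form, and no new analytic input beyond what has already been established in \cite{oh:contacton-transversality} is required.
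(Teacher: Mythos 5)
Your reduction via Lemma \ref{lem:lambda-transversality} and the concluding Sard--Smale step are the right framework, but the central ``factoring'' step does not actually deduce the corollary from Theorem \ref{thm:1-jet}. Transversality of a map to a submanifold is a condition only at points of the preimage of that submanifold, and this is where your appeal to ``functoriality of transversality under a surjective linear map on the target'' fails: writing $\pi$ for contraction with $\del/\del t$, one has $\pi\bigl(\Lambda^1_z(w^*\Xi)\bigr)=\Xi_{w(z)}$ but $\pi^{-1}(\Xi_{w(z)})=\{A\in \Hom(T_z\dot\Sigma,T_{w(z)}M)\mid A(\del/\del t)\in\Xi_{w(z)}\}$, which strictly contains $\Hom(T_z\dot\Sigma,\Xi_{w(z)})$. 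The corollary requires regularity of the scalar function $(J,w,z^+)\mapsto \lambda\bigl(dw(z^+)(\del/\del t)\bigr)$ at \emph{every} of its zeros, i.e.\ on a codimension-one locus, whereas Theorem \ref{thm:1-jet} only speaks about the codimension-two locus where the whole one-form $w^*\lambda(z^+)$ vanishes. At a point where $\lambda(dw(z^+)(\del/\del t))=0$ but $\lambda(dw(z^+)(\del/\del\tau))\neq 0$ --- and such points constitute essentially all of $(D_t(\ev^+))^{-1}(\Xi)$ --- the hypothesis of Theorem \ref{thm:1-jet} is vacuous, so nothing can be ``pushed forward''. (A toy model: $f(x)=(x^2,1)$ is vacuously transverse to the origin in $\R^2$, yet its first component is not transverse to $0\in\R$.) For the same reason the step you call ``essentially a linear-algebraic verification'' is not linear algebra: surjectivity of contraction by $\del/\del t$ on \emph{all} of $\Lambda^1_z(w^*TM)$ says nothing about the image of $D(\ev^+)$ modulo $\Xi$ at points outside the locus covered by the theorem.

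What is needed is to establish the transversality for the weaker constraint directly: run the universal argument for the section whose pointwise component is $(J,(j,w),z)\mapsto \lambda\bigl(dw(z)(\del/\del t)\bigr)$ (the $\del_t$-component of $w^*\lambda(z)$, exactly the constraint in \eqref{eq:MM-Xi-t}), proving surjectivity of its linearization onto $\R$ at every zero by the same analytic scheme used for Theorem \ref{thm:1-jet} --- variations $L$ of $J$ and $Y$ of $w$, the $L^2$-cokernel description, unique continuation away from somewhere-injective points, and the distribution-with-point-support Lemma \ref{lem:gelfand} --- and only then apply Sard--Smale to the projection to $\CJ_\lambda$. With this in place the dimension count is the paper's: the marked point constrained to a line $\{t=\mathrm{const}\}$ contributes $+1$ and the condition $dw(z^+)(\del/\del t)\in\Xi$ cuts down by $\operatorname{codim}_{TM}\Xi=1$; your bookkeeping via the $\R$-translation symmetry lands on the same number but is not how the constraint is set up in \eqref{eq:MM-Xi-t}.
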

We write
\be\label{eq:MM-Xi}
\widetilde \CM_{(0,1)}^{\Xi,t} (\dot \Sigma,M;J): = (D_t(\ev^+))^{-1}(\Xi).
\ee
Based on the study of higher jet evaluation transversality established in \cite{oh:higher-jet}
(also in \cite{wendl:super-rigidity}) for the pseudoholomorphic curve
moduli spaces in symplectic geometry, similar higher jet evaluation transversality can be also
established which we omit and leave to the interested readers.

\section{Preliminaries}

For the purpose of studying the generic evaluation transversality, similarly as in symplectic geometry
\cite{oh:fredholm}, we consider the universal moduli space
$$
\CM^{\text{\rm univ}}(M,\lambda;\overline \gamma, \underline \gamma)
$$
consisting of the triples $(w,(J,\vec R))$ satisfying 
\be\label{eq:contacton-Legendrian-bdy-intro}
\begin{cases}
\delbar^\pi_J w = 0, \, \quad d(w^*\lambda \circ j) = 0\\
w(\overline{z_iz_{i+1}}) \subset R_i, \quad i = 1, \ldots, k
\end{cases}
\ee
for a map $w:(\dot \Sigma, \del \dot \Sigma) \to (M,\vec R)$ with the boundary
condition 
with $\vec R = \{R_i\}_{i=1}^k$ \emph{with fixed asymptotics
$(\overline \gamma, \underline \gamma)$ at the punctures of $\dot \Sigma$}.
Denote by $\mathfrak{Led}(M,\xi)$ the set of smooth Legendrian
submanifolds with this given set of Reeb chords as its asymptotics boundary condition.

We regard the assignment
$$
\Upsilon^{\text{\rm univ}}: w \mapsto \left(\delbar^\pi_J w, d(w^*\lambda \circ j)\right), \quad
\Upsilon: = (\Upsilon_1,\Upsilon_2)
$$
as a section of the (infinite dimensional) vector bundle
\be\label{eq:universal-CD}
\CC\CD^{\text{\rm univ}} \to \CF^{\text{\rm univ}}(M,\lambda; \overline \gamma, \underline \gamma)
\ee
where we put
$$
\CF^{\text{\rm univ}}(M,\lambda; \overline \gamma, \underline \gamma): =
\bigcup_{\vec R \in \mathfrak{Led}(M,\xi)} \CF\left(M,\lambda;\vec R; 
\overline \gamma, \underline \gamma\right)
\times  \CJ_\lambda(M,\xi).
$$
We denote by $\Upsilon^{\text{\rm univ}}$ the parameterized section of \eqref{eq:universal-CD}
defined by
$$
\Upsilon^{\text{\rm univ}}((w,J),\vec R) = \left(\delbar^\pi_J w, d(w^*\lambda \circ j)\right).
$$
Here $\CF(M,\lambda; \vec R; \overline \gamma, \underline \gamma)$ is the off-shell function space
associated to the moduli space $\CM(M,\lambda;J;\vec R; \overline \gamma, \underline \gamma)$.
(See Definition \ref{defn:Banach-manifold}.)
The relevant Fredholm result has been
established in \cite{oh:contacton} for the closed string case and
for the open string case in \cite{oh:contacton-transversality}.

We start with recalling the framework for the study of generic
nondegeneracy results for the Reeb orbits and chords given in 
\cite{oh-yso:index}, \\cite{oh:contacton-transversality}.

\begin{defn} \label{defn:Moore-path2}Let $T \geq 0$ and consider a curve $\gamma:[0,1] \to M$ be a
smooth curve. We say $(\gamma,T)$ an \emph{iso-speed  Reeb trajectory} if the pair satisfies
$$
\dot \gamma(t) = T R_\lambda(\gamma(t)), \, \int \gamma^*\lambda = T
$$
for all $t \in [0,1]$. If $\gamma(1) = \gamma(0)$, we call $(\gamma,T)$ an
iso-speed closed Reeb orbit and $T$ the \emph{action} of $\gamma$.
\end{defn}

We start with the case of closed orbits.

\begin{defn} Let $(\gamma,T)$ be an iso-speed closed Reeb orbit in the sense as above.
When $|T| > 0$ is minimal among such that $\gamma(1) = \gamma(0)$ with $\int \gamma^*\lambda \neq 0$,
we call the pair $(\gamma,T)$ a \emph{simple} iso-speed closed Reed orbit.
\end{defn}

We consider the relative version thereof.

\begin{defn}[Iso-speed Reeb chords \cite{oh:entanglement1}]
 Let $(R_0,R_1)$ be a pair of Legendrian submanifolds of $(M,\xi)$ and $T\geq 0$.
For given contact form $\lambda$, we say $(\gamma, T)$ with $\gamma:[0,1] \to M$
is a Reeb chord from $R_0$ to $R_1$
if
$$
\dot \gamma(t) = T \dot R_\lambda(\gamma(t)), \quad \gamma(0) \in R_0, \, \gamma(1) \in R_1.
$$
We call any such $(\gamma, T)$ an \emph{iso-speed Reeb chord} and
\emph{nonnegative} if $T \geq 0$.
\end{defn}

Let $(\gamma, T)$ be a closed Reeb orbit of action $T$.
By definition, we can write $\gamma(T) = \phi^T_{R_\lambda}(\gamma(0))$
for the Reeb flow $\phi^T= \phi^T_{R_\lambda}$ of the Reeb vector field $R_\lambda$.
In particular $p = \gamma(0)$ is a fixed point of the diffeomorphism $\phi^T$.
Since $L_{R_\lambda}\lambda = 0$, $\phi^T$ is a contact diffeomorphism and so
induces an isomorphism
$$
\Psi_\gamma : = d\phi^T(p)|_{\xi_p}: \xi_p \to \xi_p
$$
which is the linearization restricted to $\xi_p$ of the Poincar\'e return map.

\begin{defn} Let $T> 0$. We say a $T$-closed Reeb orbit $(T,\lambda)$ is \emph{nondegenerate}
if $\Psi_\gamma:\xi_p \to \xi_p$ with $p = \gamma(0)$ has not eigenvalue 1.
\end{defn}

The  following generic nondegeneracy result is proved in \cite{oh:contacton-transversality}
for Reeb chords which extends the above generic nondegeneracy results to
the case of open strings of Reeb chord and to the Bott-Morse situation of constant chords.

\begin{thm} \label{thm:Reeb-chords}
Let $(M,\xi)$ be a contact manifold. Let  $(R_0,R_1)$ be a pair of Legendrian submanifolds
allowing the case $R_0 = R_1$.
\begin{enumerate}
\item For a given pair $(R_0,R_1)$, there exists a residual subset
$$
\CC^{\text{\rm reg}}(\xi;R_0,R_1) \subset \CC(M,\xi)
$$
such that for any $\lambda \in \CC^{\text{\rm reg}}(\xi;R_0,R_1) $ all
Reeb chords from $R_0$ to $R_1$ are nondegenerate for $T > 0$ and
Bott-Morse nondegenerate when $T = 0$.
\item For a given contact form $\lambda$, there exists a residual subset of pairs $(R_0,R_1)$
of Legendrian submanifolds such that  all Reeb chords from $R_0$ to $R_1$ are
nondegenerate for $T > 0$ and
Bott-Morse nondegenerate when $T = 0$.
\item When $n \geq 2$ and when $R_0=R_1 = R$, there is no closed Reeb orbit of $(M,\lambda)$
intersecting $R$.
\end{enumerate}
\end{thm}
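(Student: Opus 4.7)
The plan is a standard Sard--Smale argument applied to a universal moduli space of Reeb chords. For part (1), fix $(R_0,R_1)$ and let $\CC(M,\xi)$ denote the Banach manifold of contact forms of suitable regularity (e.g.\ $C^\varepsilon$ in the Floer sense) with kernel $\xi$. For $T>0$, define the parametric evaluation
\[
\mathcal{Z}: \CC(M,\xi) \times R_0 \times (0,\infty) \longrightarrow M, \qquad (\lambda, p, T) \longmapsto \phi^T_{R_\lambda}(p).
\]
A Reeb chord from $R_0$ to $R_1$ corresponds to a point in $\mathcal{Z}^{-1}(R_1)$, and nondegeneracy of $(\gamma,T)$ is equivalent to the transversality of the slice $(p,T) \mapsto \phi^T_{R_\lambda}(p)$ to $R_1$ at $(\gamma(0),T)$. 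First I would verify that $\mathcal{Z}$ itself is transverse to $R_1$ on the universal space, so that $\mathcal{Z}^{-1}(R_1)$ is a Banach submanifold, and then apply Sard--Smale to the $C^k$-smooth Fredholm projection $\mathcal{Z}^{-1}(R_1) \to \CC(M,\xi)$, whose regular values form the required residual set $\CC^{\text{reg}}(\xi;R_0,R_1)$.

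The crucial step is universal surjectivity of $d\mathcal{Z}$ modulo $TR_1$. Writing an infinitesimal variation as $\delta\lambda = h\lambda + \beta$ with $\beta(R_\lambda)=0$ (so that $\xi$ is preserved to first order), the induced variation $\delta R_\lambda$ is a section of $\xi$ that is essentially unconstrained, and Duhamel's formula expresses the resulting $\delta\phi^T_{R_\lambda}(\gamma(0))$ as a path integral along $\gamma$. A standard bump-off-support construction, localized to an interior subarc of the chord (where $\gamma$ is an embedded arc when the chord is simple), then produces any prescribed vector in $\xi_{\gamma(1)}$. Together with the $\partial_T$-direction, which contributes $R_\lambda(\gamma(1))$ and is automatically transverse to the Legendrian $R_1$, this yields surjectivity onto $T_{\gamma(1)}M/T_{\gamma(1)}R_1$. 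Multiple covers are reduced to the underlying simple chord by a Floquet-type argument on the linearized Poincar\'e return map.

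For part (2), the roles are reversed: $\lambda$ is fixed, and pairs $(R_0,R_1)$ vary in the Banach manifold parameterized, via the Weinstein Legendrian neighborhood theorem, by 1-jets of functions on $R_0$ and $R_1$. The universal surjectivity is then even more direct, since variations of $R_i$ near $\gamma(i)$ span all relevant normal directions. The $T=0$ Bott--Morse condition in (2) reduces to ensuring that $R_0$ and $R_1$ intersect cleanly, which is generic in the Legendrian pair space; in (1) this part of the condition is $\lambda$-independent and is imposed by enlarging $\CC^{\text{reg}}(\xi;R_0,R_1)$ to insist on clean intersection. For part (3), I would apply a parametric Sard--Smale to
\[
(\lambda,p,T,t) \longmapsto \bigl(\phi^T_{R_\lambda}(p)-p,\; \phi^t_{R_\lambda}(p)\bigr),
\]
tested against $\{0\} \times R$; the expected dimension of the fiber is $1+1+1-\dim M-\operatorname{codim} R = 1-n$, which is negative for $n\geq 2$, so for $\lambda$ in a residual subset no closed Reeb orbit meets $R$. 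Intersecting the three residual sets completes the proof.

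The main obstacle is the surjectivity step under the constraint that $\delta\lambda$ preserve the contact structure $\xi$: not every infinitesimal deformation of $R_\lambda$ is realizable by a kernel-preserving $\delta\lambda$. The argument above succeeds because the admissible $\delta R_\lambda$ are exactly the sections of $\xi$, and it is precisely the $\xi$-component of $\delta\phi^T_{R_\lambda}(\gamma(0))$ at $\gamma(1)$ that needs to be controlled, modulo the automatically-transverse Reeb direction. Simplicity of the chord is essential to ensure that the localized bump perturbations $\beta$ do not interfere with themselves along multiply-traversed portions of $\gamma$.
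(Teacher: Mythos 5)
First, a remark on provenance: the paper does not prove Theorem \ref{thm:Reeb-chords} at all; it is quoted from \cite{oh:contacton-transversality} (and \cite{oh-yso:index}), so there is no in-paper argument to compare with. Your overall scheme --- a universal moduli space of chords, universal transversality, then Sard--Smale applied to the projection to the parameter space --- is indeed the standard scheme used for such statements. But your execution of what you yourself identify as ``the crucial step'' contains a genuine error. The tangent space to $\CC(M,\xi)$ at $\lambda$ is \emph{not} $\{h\lambda+\beta:\beta(R_\lambda)=0\}$; it is exactly $\{h\lambda: h\in C^\infty(M)\}$. A $1$-form $\beta$ with $\beta(R_\lambda)=0$ is supported on $\xi$, and adding it is precisely what destroys the condition $\ker\lambda=\Xi$ to first order; preserving the kernel forces $\delta\lambda|_\xi=0$, i.e.\ $\delta\lambda=h\lambda$. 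Consequently the induced variation of the Reeb field is \emph{not} an ``essentially unconstrained section of $\xi$'': differentiating $\lambda_t(R_{\lambda_t})=1$ and $\iota_{R_{\lambda_t}}d\lambda_t=0$ for $\lambda_t=(1+th)\lambda$ gives $\delta R_\lambda=-hR_\lambda+Z_h$, where $Z_h\in\xi$ is determined pointwise by $dh|_\xi$ through $\iota_{Z_h}d\lambda|_\xi=dh|_\xi$. So the admissible variations are constrained to come from the $1$-jet of a single function $h$, coupled along the chord (the tangential derivative of $h$ along $\gamma$ is tied to the values of $h$ there). Surjectivity onto $T_{\gamma(1)}M/T_{\gamma(1)}R_1$ can still be achieved along an embedded subarc, because $\dot\gamma$ is parallel to $R_\lambda$ and hence transverse to $\xi$, so $h|_\gamma$ and $dh|_\xi$ along $\gamma$ can be prescribed independently; but that is exactly the technical content that your ``unconstrained bump'' argument skips, and as written the Duhamel/bump step does not go through with the perturbation class you allowed, since that class is not even tangent to $\CC(M,\xi)$.

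Two further points. For the $T=0$ assertion in part (1), the set of constant chords is $R_0\cap R_1$, which is independent of $\lambda$; hence no choice of residual $\CC^{\text{\rm reg}}(\xi;R_0,R_1)$ can ``impose'' clean intersection, and your proposal to enlarge the residual set to insist on it is vacuous --- the Bott--Morse statement at $T=0$ has to be understood for the relevant case (notably $R_0=R_1=R$, where the constant chords form the clean manifold $R$ itself), not arranged by perturbing $\lambda$. For part (3), your dimension count $1-n<0$ for $n\geq 2$ is the right heuristic, but the parametric transversality needed to run Sard--Smale there must again be established with the constrained perturbations $\delta\lambda=h\lambda$ (and with multiply covered closed orbits treated separately, which for closed orbits is a real issue and is not disposed of by a one-line ``Floquet-type'' remark). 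So the architecture is right, but the surjectivity step --- the heart of the proof --- rests on a mischaracterization of the admissible deformations and needs to be redone with $\delta\lambda=h\lambda$ and $\delta R_\lambda=-hR_\lambda+Z_h$ as above.
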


\section{Off-shell Fredholm setting for the evaluation transversality}
\label{sec:ev-transversality}

We first recall the off-shell setting of the study of linearized operator 
the moduli space
$$
\CM_k((\dot \Sigma,\del \dot \Sigma),(M,\vec R)), \quad \vec R = (R_1,\cdots, R_k)
$$
of finite energy contact instantons $w: \dot \Sigma \to M$ satisfying the equation
\eqref{eq:contacton-Legendrian-bdy-intro} as before.

For the review of the Fredholm setting, although we will need the study only of 
interior evaluation maps for the application to the orderability study, we will consider the general setting of 
contact instantons of arbitrary genus including the Legendrian boundary condition for the future purpose,
because it is this general setting employed in the present author's article 
\cite{oh:contacton-transversality} and we would like to hint that our exposition will also
apply to the boundary evaluation map. Consideration of this general case does not make much difference except 
incorporation of boundary condition since we will only state the relevant statements
without mentioning their proofs in this review. We refer to ibid. for the readers
interested in looking at details of the proofs thereof.

\subsection{Off-shell description of moduli spaces}

{We borrow the exposition given in \cite{oh:contacton-transversality} and \cite{oh-yso:index} here.

Let $(\Sigma, j)$ be a bordered compact Riemann surface, and let $\dot \Sigma$ be the
punctured Riemann surface with $\{z_1,\ldots, z_k \} \subset \del \Sigma$, we consider the moduli space
$$
\CM_k((\dot \Sigma,\del \dot \Sigma),(M,\vec R)), \quad \vec R = (R_1,\cdots, R_k)
$$
 of finite energy maps $w: \dot \Sigma \to M$ satisfying the equation \eqref{eq:contacton-Legendrian-bdy-intro}.

We will be mainly interested in the two cases:
\begin{enumerate}
\item A generic nondegenerate case of $R_1, \cdots, R_k$ which in particular
are mutually disjoint,
\item The case where $R_1, \cdots, R_k = R$.
\end{enumerate}

The  second case is transversal in the Bott-Morse sense both for the Reeb
chords and for the moduli space of contact instantons, which is
rather straightforward and easier to handle, and so omitted.

For the first case, all the asymptotic Reeb chords  are nonconstant and have nonzero
action $T \neq 0$. In particular, the relevant punctures $\chi^+
$
are not removable. Therefore we have the decomposition of the finite energy moduli space
$$
\CM_k((\dot \Sigma,\del \dot \Sigma),(M,\vec R)) =
\bigcup_{\vec \gamma \in \prod_{i=0}^{k-1}\frak{Reeb}(R_i,R_{i+1})} \CM(\vec \gamma),
\quad \gamma = (\gamma_1, \ldots, \gamma_k).
$$
Depending on the choice of strip-like coordinates we divide the punctures
$$
\{z_1, \cdots, z_k\} \subset \del \Sigma
$$
into two subclasses
$$
p_1, \cdots, p_{s^+}, q_1, \cdots, q_{s^-} \in \del \Sigma
$$
as the positive and negative boundary punctures. We write $k = s^+ + s^-$.

Let $\gamma^+_i$ for $i =1, \cdots, s^+$ and $\gamma^-_j$ for $j = 1, \cdots, s^-$
be two given collections of Reeb chords at positive and negative punctures
respectively.  We denote by $\underline \gamma$ and $\overline \gamma$ the corresponding
collections
\beastar
\underline \gamma & = & \{\gamma_1^+,\cdots, \gamma_{s^+}\} \\
\overline \gamma & = & \{\gamma_1^+,\cdots, \gamma_{s^-}\}.
\eeastar
For each $p_i$ (resp. $q_j$), we associate the strip-like
coordinates $(\tau,t) \in [0,\infty) \times S^1$ (resp. $(\tau,t) \in (-\infty,0] \times S^1$)
on the punctured disc $D_{e^{-2\pi K_0}}(p_i) \setminus \{p_i\}$
(resp. on $D_{e^{-2\pi R_0}}(q_i) \setminus \{q_i\}$) for some sufficiently large $K_0 > 0$.

\begin{defn}\label{defn:Banach-manifold} We define
\be\label{eq:offshell-space}
\CF((\dot \Sigma, \del \dot \Sigma),(M, \vec R);J;\underline \gamma,\overline \gamma)
\ee
to the set of smooth maps satisfying the boundary condition
\be\label{eq:bdy-condition}
w(z) \in R_i \quad \text{ for } \, z \in \overline{z_{i-1}z_i}
 \subset \del \dot \Sigma
\ee
and the asymptotic condition
\be\label{eq:limatinfty}
\lim_{\tau \to \infty}w((\tau,t)_i) = \gamma^+_i(T_i(t+t_i)), \qquad
\lim_{\tau \to - \infty}w((\tau,t)_j) = \overline \gamma_j(T_j(t-t_j))
\ee
for some $t_i, \, t_j \in S^1$, where
$$
T_i = \int_{S^1} (\gamma^+_i)^*\lambda, \, T_j = \int_{S^1} ( \gamma^-_j)^*\lambda.
$$
Here $t_i,\, t_j$ depends on the given analytic coordinate and the parameterizations of
the Reeb chords.
\end{defn}
We will fix $j$ and its associated K\"ahler metric $h$.
We regard the assignment
$$
\Upsilon: w \mapsto \left(\delbar^\pi w, d(w^*\lambda \circ j)\right), \quad
\Upsilon: = (\Upsilon_1,\Upsilon_2)
$$
as a section of the (infinite dimensional) vector bundle:
We first formally linearize and define a linear map
$$
D\Upsilon(w): \Omega^0(w^*TM,(\del w)^*T\vec R) \to \Omega^{(0,1)}(w^*\Xi) \oplus \Omega^2(\Sigma)
$$
where we have the tangent space
$$
T_w \CF = \Omega^0\left(w^*TM,(\del w)^*T\vec R\right).
$$
For the optimal expression of the linearization map and its relevant
calculations, we use the contact triad connection $\nabla$ of $(M,\lambda,J)$ and the contact
Hermitian connection $\nabla^\pi$ for $(\xi,J)$ introduced in
\cite{oh-wang:connection,oh-wang:CR-map1}.

Let $k \geq 2$ and $p > 2$. We denote by
\be\label{eq:CWkp}
\CW^{k,p}: = \CW^{k,p}\left((\dot \Sigma, \del \dot \Sigma),(M, \vec R); \underline \gamma,\overline \gamma\right)
\ee
the completion of the space \eqref{eq:offshell-space}.
It has the structure of a Banach manifold modelled by the Banach space given by the following

\begin{defn}\label{defn:tangent-space} We define
$$
W^{k,p} \left(w^*TM, (\del w)^*T\vec R; \underline \gamma,\overline \gamma\right)
$$
to be the set of vector fields $Y = Y^\pi + \lambda(Y) R_\lambda$ along $w$ that satisfy
\be\label{eq:tangent-element}
\begin{cases}
Y^\pi \in W^{k,p}\left((\dot\Sigma, \del \dot \Sigma), \left(w^*\Xi, (\del w)^*T\vec R\right)\right),  \\
\lambda(Y) \in W^{k,p}((\dot \Sigma, \del \dot \Sigma),(\R, \{0\})),\\
Y^\pi(\del \dot \Sigma) \subset T\vec R, \qquad \lambda(Y)(\del \dot \Sigma) = 0.
\end{cases}
\ee
\end{defn}
Here we use the splitting
$$
TM = \xi \oplus \span_\R\{R_\lambda\}
$$
where $\span_\R\{R_\lambda\}: = \CL$ is a trivial line bundle and so
$$
\Gamma(w^*\CL) \cong C^\infty(\dot \Sigma, \del \dot \Sigma).
$$
The above Banach space is decomposed into the direct sum
\be\label{eq:tangentspace}
W^{k,p}\left((\dot\Sigma,\del \dot \Sigma),( w^*\Xi, (\del w)^*T\vec R)\right)
\bigoplus W^{k,p}((\dot \Sigma,\del \dot \Sigma), ( \R, \{0\})) \otimes R_\lambda :
\ee
by writing $Y = (Y^\pi, g R_\lambda)$ with a real-valued function $g = \lambda(Y(w))$ on $\dot \Sigma$.
Here we measure the various norms in terms of the triad metric of the triad $(M,\lambda,J)$.

Now for each given $w \in \CW^{k,p}((\dot \Sigma,\del \dot \Sigma), (M, \vec R);J;\underline \gamma,\overline \gamma)$,
we consider the Banach space
$$
\Omega^{(0,1)}_{k-1,p}(w^*\Xi): = W^{k-1,p}\left(\Lambda_{j,J}^{(0,1)}(w^*\Xi)\right)
$$
the $W^{k-1,p} $-completion of $\Omega^{(0,1)}(w^*\Xi) = \Gamma(\Lambda^{(0,1)}(w^*\Xi))$ and form the bundle
\be\label{eq:CH01}
\bigcup_{w \in \CW^{k,p}} \Omega^{(0,1)}_{k-1,p}(w^*\Xi)
\ee
over $\CW^{k,p}$.

\begin{defn}\label{defn:CHCM01} We associate the Banach space
\be\label{eq:CH01-w}
\CH^{\pi(0,1)}_{k-1,p}(M,\lambda)|_w: = \Omega^{(0,1)}_{k-1,p}(w^*\Xi) \oplus \Omega^2_{k-2,p}(\dot \Sigma)
\ee
to each $w \in \CW^{k,p}$ and form the bundle
$$
\CH^{\pi(0,1)}_{k-1,p}(M,\lambda): = \bigcup_{w \in \CW^{k,p}} \{w\} \times \CH^{\pi(0,1)}_{k-1,p}(M,\lambda)|_w
$$
over $\CW^{k,p}$.
\end{defn}

Then we can regard the assignment
$$
\Upsilon_1: w \mapsto \delbar^\pi w
$$
as a smooth section of the bundle $\CH^{\pi(0,1)}_{k-1,p}(M,\lambda) \to \CW^{k,p}$. Furthermore
the assignment
$$
\Upsilon_2: w \mapsto d(w^*\lambda \circ j)
$$
defines a smooth section of the trivial bundle
$$
\Omega^2_{k-2,p}(\Sigma) \times \CW^{k,p} \to \CW^{k,p}
$$
for the Banach manifold
$$
\CW^{k,p}:= \CW^{k,p}\left((\dot \Sigma,\del \dot \Sigma),(M, \vec R);J;\underline \gamma,\overline \gamma\right).
$$
We summarize the above discussion to the following lemma.

\begin{lem}\label{lem:Upsilon} Consider the vector bundle
$$
\CH^{\pi(0,1)}_{k-1,p}(M,\lambda) \to \CW^{k,p}.
$$
The map $\Upsilon$ continuously extends to a continuous section
$$
\Upsilon: \CW^{k,p} \to \CH^{\pi(0,1)}_{k-1,p} (\xi;\vec R).
$$
\end{lem}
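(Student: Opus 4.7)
The plan is to verify the two components $\Upsilon_1$ and $\Upsilon_2$ separately using Sobolev embedding and standard nonlinear composition estimates in Sobolev spaces. Since $k \geq 2$ and $p > 2$, one has $W^{k,p}(\dot \Sigma) \hookrightarrow C^1(\dot \Sigma)$, so any $w \in \CW^{k,p}$ is at least a $C^1$ map into $M$, and $dw$ lies in $W^{k-1,p}$. The smooth bundle projection $\pi_\lambda: TM \to \Xi$ along $R_\lambda$, the almost complex structure $J$, and the conformal structure $j$ are all smooth fiberwise linear tensors, so their pullbacks under $w$ are controlled by the nonlinear composition ($\omega$-) lemma. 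This should give $\Upsilon_1(w) = \delbar^\pi_J w = \tfrac{1}{2}(\pi_\lambda dw + J \pi_\lambda dw \circ j) \in \Omega^{(0,1)}_{k-1,p}(w^*\Xi)$, together with continuity of $w \mapsto \Upsilon_1(w)$ with respect to the Banach manifold topology on $\CW^{k,p}$.

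For $\Upsilon_2(w) = d(w^*\lambda \circ j)$, I would work in a local conformal chart and write $w^*\lambda = (\lambda\circ w)(dw)$, which lies in $W^{k-1,p}$ as a $1$-form since $\lambda \circ w$ is a smooth coefficient acting pointwise on $dw \in W^{k-1,p}$. Precomposition with $j$ is smooth, and exterior differentiation is a first-order operator with smooth coefficients that drops regularity by one, so $d(w^*\lambda \circ j) \in \Omega^2_{k-2,p}(\dot \Sigma)$ continuously in $w$. The two pieces then assemble into a continuous section of the direct sum bundle $\CH^{\pi(0,1)}_{k-1,p}(M,\lambda)$ of Definition \ref{defn:CHCM01}.

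The only delicate step, which I would treat last, is the behavior at the punctures, where the Banach manifold $\CW^{k,p}$ is built from exponentially weighted Sobolev norms adapted to the asymptotic chords $\gamma^\pm_i$. In strip-like coordinates near a puncture I would write $w = \exp_{\bar w_i}(Y)$, where $\bar w_i(\tau,t) = \gamma^\pm_i(T_i(t+t_i))$ is the model trajectory and $Y$ is an element of the weighted Sobolev class. Since $\bar w_i$ exactly solves the contact instanton system, $\Upsilon(\bar w_i) \equiv 0$, and so $\Upsilon(w)$ becomes a nonlinear expression in $Y$ and $\nabla Y$ that vanishes at $Y = 0$; the weighted $\omega$-lemma then transfers the exponential decay of $Y$ to $\Upsilon(w)$ inside the target bundle.

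The main obstacle is keeping the weight and regularity bookkeeping uniform in the base point $w$, so that the weights implicit in the codomain bundle match those built into $\CW^{k,p}$. This reduces to estimates entirely parallel to those for pseudoholomorphic strips in \cite{oh:contacton-transversality} and introduces no new analytic input, so the extension exists and is continuous as claimed.
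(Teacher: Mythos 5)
Your proposal is correct and follows essentially the same route as the paper, which states the lemma merely as a summary of the preceding Sobolev bookkeeping: $\delbar^\pi_J$ is first order in $w$ so it lands in $\Omega^{(0,1)}_{k-1,p}(w^*\Xi)$, while $d(w^*\lambda\circ j)$ involves one more derivative and lands in $\Omega^2_{k-2,p}(\dot\Sigma)$, with continuity following from the standard composition ($\omega$-lemma) estimates. The only addition in your write-up is the explicit treatment of the strip-like ends via the model Reeb chord and weighted norms, which the paper does not spell out but defers to the framework of \cite{oh:contacton-transversality} and \cite{oh-wang:CR-map1}; this is consistent with, not different from, the intended argument.
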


With these preparations, the following is a consequence of the exponential estimates established
in \cite{oh-wang:CR-map1} for the case of vanishing charge.

\begin{prop}[Theorem 1.12 \cite{oh-wang:CR-map1}]
Assume $\lambda$ is nondegenerate and $Q(p_i) = 0$.
Let $w:\dot \Sigma \to M$ be a contact instanton and let $w^*\lambda = a_1\, d\tau + a_2\, dt$.
Suppose
\bea
\lim_{\tau \to \infty} a_{1,i} = 0, &{}& \, \lim_{\tau \to \infty} a_{2,i} = T(p_i)\nonumber\\
\lim_{\tau \to -\infty} a_{1,j} = 0, &{}& \, \lim_{\tau \to -\infty} a_{2,j} = T(p_j)
\eea
at each puncture $p_i$ and $q_j$.
Then $w \in \CW^{k,p}(\dot \Sigma, M;J;\underline \gamma,\overline \gamma)$.
\end{prop}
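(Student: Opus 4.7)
The proof is essentially a restatement of the exponential convergence theorem of \cite{oh-wang:CR-map1}: the plan is to convert the pointwise exponential decay of $w$ toward its asymptotic Reeb chords into finiteness of the weighted Sobolev norm defining $\CW^{k,p}$. There is no new analytic content beyond the cited theorem, so the task is to check that its hypotheses hold and that its conclusion is strong enough to feed into Definition \ref{defn:Banach-manifold} and Definition \ref{defn:tangent-space}.

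First, at each positive puncture $p_i$ equipped with strip-like coordinates $(\tau,t)$, the nondegeneracy of $\lambda$, the charge vanishing $Q(p_i) = 0$, and the assumed convergence $a_{1,i} \to 0$, $a_{2,i} \to T(p_i)$ are precisely the hypotheses of the cited exponential convergence theorem. That theorem produces constants $\delta > 0$ and $C_\ell > 0$ for each $\ell \leq k$ with the following property: after choosing a tubular neighborhood of the asymptotic trajectory $t \mapsto \gamma_i^+(T_i(t+t_i))$ and writing $w(\tau,t) = \exp_{\gamma_i^+(T_i(t+t_i))}(Y_i(\tau,t))$ for a defect vector field $Y_i$, the covariant derivative of $Y_i$ with respect to the contact triad connection of $(M,\lambda,J)$ satisfies $|\nabla^\ell Y_i(\tau,t)| \leq C_\ell e^{-\delta \tau}$ uniformly in $t$. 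The identical estimate on each negative end is obtained by applying the theorem with reversed orientation. Because $e^{-\delta\tau}$ lies in $L^p$ on a half-cylinder for every $p > 2$ and $\delta > 0$, integration of the pointwise bounds yields a finite contribution from each end to the $W^{k,p}$ norm of $Y$.

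Second, I would verify compatibility with the Banach manifold structure of $\CW^{k,p}$. The Legendrian boundary condition $w(\overline{z_{i-1}z_i}) \subset R_i$ is preserved in the limit $\tau \to \pm\infty$ since each limiting chord has endpoints on the prescribed Legendrians; accordingly the exponential chart defining $Y_i$ can be chosen to respect \eqref{eq:bdy-condition}, which forces $Y^\pi|_{\del \dot \Sigma} \subset T\vec R$ and $\lambda(Y)|_{\del \dot \Sigma} = 0$ as required. On the compact truncation of $\dot \Sigma$ obtained by removing open cylindrical ends, $w$ is smooth by assumption and hence trivially in $W^{k,p}$. Patching the compact piece with the cylindrical-end estimates through a cutoff supported near the punctures then places $w$ in $\CW^{k,p}(\dot \Sigma, M;J;\underline \gamma,\overline \gamma)$.

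The only genuinely nontrivial ingredient is the exponential convergence theorem itself, for which the main obstacle, were one forced to reprove it, is controlling the linearized asymptotic operator on the cylindrical end: after writing the contact instanton equation relative to the limiting Reeb trajectory one obtains an operator of the form $\tfrac{d}{d\tau} + A_\infty$, and the decay rate $\delta$ is determined by the smallest modulus of a nonzero eigenvalue of $A_\infty$. The combined assumptions $Q(p_i) = 0$ and nondegeneracy of the asymptotic Reeb chord are exactly what guarantee that $A_\infty$ has trivial kernel, thereby producing a positive spectral gap and the required exponential rate $\delta$.
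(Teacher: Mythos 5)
Your proposal is correct and follows the same route as the paper, which states this result as a direct quotation of the exponential estimates of the cited reference (it is labeled Theorem 1.12 of that paper) and offers no independent proof beyond noting it is "a consequence of the exponential estimates established there for the case of vanishing charge." Your elaboration—converting the pointwise exponential decay on the strip-like ends into finiteness of the (weighted, with weight smaller than the spectral gap) $W^{k,p}$ norm, checking the Legendrian boundary compatibility, and patching with the compact part—is exactly the routine deduction the paper leaves implicit.
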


Now we are ready to define the moduli space of contact instantons with prescribed
asymptotic condition.
\begin{defn}\label{defn:tilde-modulispace} Consider the zero set of the section $\Upsilon$
\be\label{eq:defn-tildeMM}
\widetilde \CM\left((\dot \Sigma,\del \dot \Sigma),(M,\vec R);J;\underline \gamma,\overline \gamma\right) =  \Upsilon^{-1}(0)
\ee
in the Banach manifold $\CW^{k,p}\left((\dot \Sigma,\del \dot \Sigma),(M,\vec R);J;\underline \gamma,\overline \gamma\right)$, and
\be\label{eq:defn-MM}
\widetilde \CM\left((\dot \Sigma,\del \dot \Sigma),(M,\vec R);J;\underline \gamma,\overline \gamma\right)
= \widetilde \CM\left((\dot \Sigma,\del \dot \Sigma),(M,\vec R);J;\underline \gamma,\overline \gamma\right)/\sim
\ee
to be the set of isomorphism classes of contact instantons $w$.
\end{defn}
This definition does not depend on the choice of $k, \, p$ or $\delta$ as long as $k\geq 2, \, p>2$ and
$\delta > 0$ is sufficiently small. One can also vary $\lambda$ and $J$ and define the universal
moduli space.
(See also \cite{oh:contacton} for the case of
closed strings.)

We consider the space given in \eqref{eq:offshell-space}
$$
\CF\left(\dot \Sigma, \del \dot \Sigma),(M, \vec R);\underline \gamma,\overline \gamma\right)
$$
consisting of smooth maps satisfying the Legendrian boundary condition \
and the asymptotic condition \eqref{eq:limatinfty}.

This being said, since the asymptotic conditions will be fixed, we simply write the space
\eqref{eq:offshell-space} as
$$
\CF((\dot \Sigma,\del \dot \Sigma),(M,\vec R))
$$
and the corresponding moduli space as
$$
\CM((\dot \Sigma,\del \dot \Sigma),(M,\vec R)).
$$
We again consider the covariant linearized operator
$$
D\Upsilon(w): \Omega^0\left(w^*TM,(\del w)^*T\vec R\right) \to
\Omega^{(0,1)}(w^*\Xi) \oplus \Omega^2(\Sigma)
$$
of the section
$$
\Upsilon: w \mapsto \left(\delbar^\pi w, d(w^*\lambda \circ j)\right), \quad
\Upsilon: = (\Upsilon_1,\Upsilon_2)
$$
as before. For the simplicity of notation, we also write
\be\label{eq:CDw}
\CC\CD_{(J,(j,w))}: = \Omega_J^{(0,1)}(w^*\Xi) \oplus \Omega^2(\Sigma)
= \CH^{\pi(0,1)}_w \oplus \Omega^2(\Sigma)
\ee
and
$$
\CC\CD = \bigcup_{(J,(j,w)) \in \CF} \{w\} \times \CC\CD_{(J,(j,w))}.
$$
(Here $\CC\CD$ stands for `codomain'.)

The formula was derived in \cite{oh:contacton}.

\begin{thm}[Theorem 10.1 \cite{oh:contacton}; Proposition 2.21 \cite{oh:contacton-transversality}] 
\label{thm:linearization} In terms of the decomposition
$d\pi = d^\pi w + w^*\lambda\, R_\lambda$
and $Y = Y^\pi + \lambda(Y) R_\lambda$, we have
\bea
D\Upsilon_1(w)(Y) & = & \delbar^{\nabla^\pi}Y^\pi + B^{(0,1)}(Y^\pi) +  T^{\pi,(0,1)}_{dw}(Y^\pi)\nonumber\\
&{}& \quad + \frac{1}{2}\lambda(Y) (\CL_{R_\lambda}J)J(\del^\pi w)
\label{eq:DUpsilon1}\\
D\Upsilon_2(w)(Y) & = &  - \Delta (\lambda(Y))\, dA + d((Y^\pi \rfloor d\lambda) \circ j)
\label{eq:DUpsilon2}
\eea
where $B^{(0,1)}$ and $T_{dw}^{\pi,(0,1)}$ are the $(0,1)$-components of $B$ and
$T_{dw}^\pi$, where $B, \, T_{dw}^\pi: \Omega^0(w^*TM) \to \Omega^1(w^*\Xi)$ are
 zero-order differential operators given by
\be\label{eq:B}
B(Y) =
- \frac{1}{2}  w^*\lambda \otimes \left((\CL_{R_\lambda}J)J Y\right)
\ee
and
\be\label{eq:torsion-dw}
T_{dw}^\pi(Y) = \pi T(Y,dw)
\ee
respectively.
\end{thm}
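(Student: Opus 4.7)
Since the statement is purely the formal linearization of the contact instanton section $\Upsilon=(\Upsilon_1,\Upsilon_2)$, the plan is to take a smooth one-parameter family $\{w_s\}$ with $w_0=w$ and $\frac{d}{ds}\big|_{s=0}w_s=Y=Y^\pi+\lambda(Y)R_\lambda$, and differentiate each component $\Upsilon_i(w_s)$ at $s=0$, expressing everything through the contact triad connection $\nabla$ and its restriction $\nabla^\pi$ on $\xi$. The linearization will be given by $\nabla^\pi$-derivatives plus correction terms arising from (i) the fact that $\nabla$ has torsion adapted to the contact triad, and (ii) the fact that the fiberwise projection $\pi:TM\to\xi$ varies in a controlled way along the Reeb direction.

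For $\Upsilon_1(w)=\bar\partial^\pi_J w=\tfrac12(d^\pi w+J\,d^\pi w\circ j)$, first I would differentiate $d^\pi w_s$. Because $\pi$ is not $\nabla$-parallel in the Reeb direction and $J$ itself depends on the point through the contact distribution only, the Reeb component $\lambda(Y)R_\lambda$ of $Y$ produces precisely the algebraic term $\tfrac12\lambda(Y)(\mathcal L_{R_\lambda}J)J(\partial^\pi w)$, by pulling the $s$-derivative through the relations $\pi\circ R_\lambda=0$ and $[R_\lambda,\cdot]$. For the $Y^\pi$-part, the standard Floer-type calculation using $\nabla$ gives $\bar\partial^{\nabla^\pi}Y^\pi$ together with torsion contributions; collecting the $(0,1)$-parts of the tensors $B$ and $T^\pi_{dw}$ defined in \eqref{eq:B}--\eqref{eq:torsion-dw} yields \eqref{eq:DUpsilon1}. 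The identification of these two specific tensors as the correct error terms is exactly the content of the triad-connection formulas proved in \cite{oh-wang:connection,oh-wang:CR-map1}, and I would cite those structural identities rather than rederive them.

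For $\Upsilon_2(w)=d(w^*\lambda\circ j)$, I would use the pullback-variation formula
\[
\frac{d}{ds}\bigg|_{s=0}(w_s^*\lambda)=d(\lambda(Y))+Y\rfloor d\lambda,
\]
decompose $Y=Y^\pi+\lambda(Y)R_\lambda$, and use $R_\lambda\rfloor d\lambda=0$ so that only $Y^\pi\rfloor d\lambda$ survives in the interior contraction. Applying $d(\,\cdot\,\circ j)$ and using that $j$ is a complex structure on the surface, the term $d(d(\lambda(Y))\circ j)$ collapses to $-\Delta(\lambda(Y))\,dA$ by the standard identity $d\bigl(df\circ j\bigr)=-\Delta f\,dA$ on a Riemann surface with the chosen K\"ahler metric $h$, while the remaining piece is $d((Y^\pi\rfloor d\lambda)\circ j)$, giving \eqref{eq:DUpsilon2}.

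The main obstacle, and the only nontrivial bookkeeping, is the derivation of the $\lambda(Y)(\mathcal L_{R_\lambda}J)J(\partial^\pi w)$ term: it requires carefully interchanging the variation in the Reeb direction with the projection $\pi$ and the complex structure $J|_\xi$, and showing that the resulting discrepancy is precisely half the Lie-derivative tensor applied to $J\partial^\pi w$. Everything else is either the standard linearization of a Cauchy--Riemann operator with connection $\nabla^\pi$ plus explicit torsion, or the Hodge-theoretic identity on the Riemann surface. Since the formula was already established in \cite[Theorem 10.1]{oh:contacton} (with the Legendrian-boundary refinement in \cite[Proposition 2.21]{oh:contacton-transversality}), the proof here would amount to recalling that computation in the present open-string setting, noting that the boundary condition on $Y$ specified in Definition \ref{defn:tangent-space} is compatible with restriction of $\nabla$ and $\nabla^\pi$ to $T\vec R$ along $\partial\dot\Sigma$.
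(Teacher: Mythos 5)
Your proposal is correct and is essentially the derivation the paper relies on: the paper itself gives no proof of Theorem \ref{thm:linearization}, stating only that the formula was derived in \cite{oh:contacton} (with the bordered refinement in \cite{oh:contacton-transversality}), and your sketch—varying $w$ along $Y=Y^\pi+\lambda(Y)R_\lambda$, using the contact triad connection identities of \cite{oh-wang:connection,oh-wang:CR-map1} for the $\Upsilon_1$-component, and the Cartan/Hodge identities $\tfrac{d}{ds}w_s^*\lambda=d(\lambda(Y))+Y\rfloor d\lambda$, $\iota_{R_\lambda}d\lambda=0$, $d(df\circ j)=-\Delta f\,dA$ for the $\Upsilon_2$-component—matches that computation. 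No gap to report beyond what you already defer, appropriately, to the cited references.
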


From the above expression of the covariant linearization of
of the section $\Upsilon = (\Upsilon_1,\Upsilon_2)$, the linearization
continuously extends to a bounded linear map
\be\label{eq:dUpsilon}
D\Upsilon_{(\lambda,T)}(w): T\CW^{k,p} \to \CH^{\pi(0,1)}_{k-1,p}(M,\lambda)
\ee
where we recall
\beastar
 T\CW^{k,p} & = &
\Omega^0_{k,p}\left((w^*TM,(\del w)^*T\vec R);J;\underline \gamma,\overline \gamma\right) \\
\CH^{\pi(0,1)}_{k-1,p}(M,\lambda) & = & \Omega^{(0,1)}_{k-1,p}(w^*\Xi) \oplus \Omega^2_{k-2,p}(\Sigma)
\eeastar
for any choice of $k \geq 2, \, p > 2$. Using the decomposition
$$
\Omega^0_{k,p}\left(w^*TM,(\del w)^*T\vec R\right) \cong \Omega^0_{k,p}\left(w^*\Xi,(\del w)^*T\vec R\right) \oplus
\Omega^0_{k,p}(\dot \Sigma,\del \dot \Sigma)\cdot R_\lambda,
$$
$D\Upsilon(w)$ can be written into the matrix form
\be\label{eq:matrixDUpsilon}
\left(\begin{matrix}\delbar^{\nabla^\pi} + T_{dw}^{\pi,(0,1)} + B^{(0,1)}
 & \frac{1}{2} \lambda(\cdot) (\CL_{R_\lambda}J)J \del^\pi w \\
d\left((\cdot) \rfloor d\lambda) \circ j\right) & -\Delta(\lambda(\cdot)) \,dA
\end{matrix}
\right).
\ee
From the above, we have its parameterized version
\be\label{eq:formula}
D_{J,(j,w)}\aleph_0(L,(0,\xi)) = D\Upsilon_1(w)(Y) + \frac{1}{2}L \circ d^\pi w \circ j 
\ee 
with respect to the contact triad connection associated to the triad $(M,\lambda,J)$.
Here we denote
$$
T^{(1,0)}_{(j,J)}(d^\pi w,\xi) = \frac{1}{2}\left(T(d^\pi w,\xi) + J T(d^\pi w \circ j, \xi)\right).
$$
However if $ w \in
\Upsilon^{-1}\left(o_{\CH^{\pi\prime\prime}} \times o_{ \Lambda^{(1,0)}_{(j,J)}(\cdot)^*\Xi}\right)$, we have 
$d^\pi w(z_0) = 0$ and hence
$$
T^{(1,0)}_{(j,J)}(d^\pi w(z_0), \xi(z_0))=0 =\frac{1}{2}L (w(z_0))
\circ d^\pi w(z_0) \circ j_{z_0}
$$
for any $\xi$.

It follows that the map $D\Upsilon(w)$ is a partial differential operator whose principle
symbol map is given by $\sigma(D\Upsilon) = \sigma(D\Upsilon_1) \oplus \sigma(D\Upsilon_2)$
where
\bea\label{eq:symbol}
\sigma(D\Upsilon_1(w))(\eta) & = & J\Pi^*\eta \nonumber\\
\sigma(D\Upsilon_2(w))(\eta) & = & \langle \lambda,\eta\rangle^2 = (\eta(R_\lambda))^2
\eea
where $\eta$ is a cotangent vector in $T^*M \setminus \{0\}$ and
has decomposition
$$
\eta = \eta^\pi + \eta(R_\lambda(\pi(\eta))\,\lambda(\pi(\eta)).
$$
(See \cite{lockhart-mcowen} for the discussion of general elliptic operators of mixed degree
on noncompact manifolds with cylindrical ends.)

In particular we note that the restriction $D\Upsilon_1(w)|_{\Omega^0(w^*\Xi)}$ has the same
principle symbol as that of
$$
\delbar^{\nabla^\pi} : \Omega^0(w^*\Xi, (\del w)^*\xi) \to \Omega^{(0,1)}(w^*\Xi)
$$
which is the first order elliptic operator of Cauchy-Riemann type, and that
$D\Upsilon_2(w)$ has the symbol of the Hodge Laplacian acting on zero forms
$$
*\Delta: \Omega^0(\dot \Sigma,\del \dot \Sigma) \to \Omega^2(\dot \Sigma).
$$

\subsection{Inclusion of marked points and evaluation maps}

We will treat the two cases,  evaluation at an interior marked point and one at a boundary marked point,
separately. We denote by the subindex $(\ell,k)$ the number of interior and boundary marked points respectively.

Consider the universal moduli space
\beastar
&{}& \CM_{(0,1)}\left((\dot \Sigma, \del \dot \Sigma), (M,R)\right) \\
& = & \left\{((j,w),J, z) \, \Big|\,  w: \Sigma \to
M, \, \Upsilon(J,(j,w))  = 0,\, \, w(\del \dot \Sigma) \subset \vec R,\,  z \in \Int \dot \Sigma \right \}.
\eeastar
The evaluation map $\ev^+: \CM_{(0,1)}\left((\dot \Sigma,\del \dot \Sigma,(M, \vec R)\right) \to M$ is defined by
$$
\ev^+((j,w),z) = w(z).
$$
We then have the fibration
$$
\widetilde \CM_{(0,1)}\left((\dot \Sigma,\del \dot \Sigma),(M, \vec R)\right)  = \bigcup_{J \in \CJ_\lambda} \{J\} \times
\widetilde \CM_{(0,1)}\left ((\dot \Sigma,\del \dot \Sigma),(M, \vec R);J\right)
$$
over $\CJ_\lambda$.
We have the universal ($0$-jet) evaluation map
$$
\Ev^+: \widetilde \CM_{(0,1)}\left((\dot \Sigma,\del \dot \Sigma),(M, \vec R)\right) \to M.
$$
The basic generic transversality is the following from \cite{oh:contacton-transversality}.

\begin{thm}[$0$-jet evaluation transversality]\label{thm:0jet-evaluation}
The evaluation map
$$
\Ev^+: \widetilde \CM_{(0,1)}\left((\dot \Sigma,\del \dot \Sigma),(M, \vec R)\right) \to M
$$
is a submersion. The same holds for the boundary evaluation map
$$
\Ev_\del: \widetilde \CM_{(0,1)}\left((\dot \Sigma,\del \dot \Sigma,)(M, \vec R)\right) \to \vec R.
$$
\end{thm}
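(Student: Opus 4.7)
The plan is to apply the standard Sard--Smale cokernel argument to the universal section $\Upsilon^{\text{\rm univ}}$ on the Banach manifold $\CW^{k,p}\times\CJ_\lambda$, augmented by the point-evaluation functional $\text{\rm ev}_z\colon w\mapsto w(z)$. First I would use Theorem~\ref{thm:linearization} together with the symbol formula~\eqref{eq:symbol} to observe that the parameterized linearization
\[
D\Upsilon^{\text{\rm univ}}\colon (L,(a,Y))\longmapsto D\Upsilon(w)(Y)+\tfrac12\,L\circ d^\pi w\circ j
\]
is Fredholm and, thanks to the generous space of $J$-perturbations, already surjective at every zero; this is the content of the generic transversality proved in~\cite{oh:contacton-transversality}. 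Consequently $\widetilde\CM_{(0,1)}$ inherits a smooth Banach-manifold structure whose tangent space at $(J,(j,w),z)$ is $\ker D\Upsilon^{\text{\rm univ}}\oplus T_z\dot\Sigma$.

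Under this identification the differential of interest is
\[
D\Ev^+\bigl(L,(a,Y),v\bigr)\;=\;Y(z)+dw(z)(v)\ \in\ T_{w(z)}M,
\]
and the submersion claim reduces to showing that no nonzero $\eta\in T^*_{w(z)}M$ can annihilate the image. The standard cokernel reformulation converts this into a distributional problem: such an $\eta$ would produce a section $\zeta$ of the appropriate adjoint bundle satisfying $(D\Upsilon(w))^*\zeta=\eta\,\delta_z$, and by elliptic regularity for the mixed Cauchy--Riemann/Hodge-type symbol~\eqref{eq:symbol} $\zeta$ is smooth on $\dot\Sigma\setminus\{z\}$.

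To force $\eta=0$ I would test the obstruction against $J$-perturbations $L$ supported in a small Darboux chart $U\subset M$ about $w(z)$ whose preimage $w^{-1}(U)$ is a disk around $z$. Such a chart exists because contact instantons are somewhere injective at a dense subset of interior points, a fact established in~\cite{oh:contacton-transversality} by adapting the Hofer--Wysocki--Zehnder technology. Under this choice the pairing reduces to a local integral in which $L\circ d^\pi w(z)\circ j_z$ is free to realize any element of $\Xi_{w(z)}$ provided $d^\pi w(z)\neq 0$, which holds at generic $z$; this forces $\eta|_{\Xi_{w(z)}}=0$. The Reeb component of $\eta$ is then killed by varying $v\in T_z\dot\Sigma$ together with the $\lambda(Y)R_\lambda$ component of $Y$, using that $w^*\lambda$ is nontrivial at interior points where $d^\pi w$ does not vanish. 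The main obstacle is precisely this somewhere-injectivity / rank step, since $D\Upsilon$ is of mixed order and $w$ may a priori fail to be immersive or injective on small exceptional sets; one handles this by first verifying the submersion property on the open dense locus of ``good'' evaluation points $z$ and then propagating it by openness. The boundary statement for $\Ev_\del$ follows by the same scheme, with the $J$-perturbations (and Legendrian variations, when allowed) constrained to preserve the boundary condition along $\overline{z_{i-1}z_i}$, so that the image of $D\Ev_\del$ lands in $T\vec R$ rather than $TM$.
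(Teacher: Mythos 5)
Your overall dual scheme (linearize the augmented section, take an annihilator of the image, convert to a distributional adjoint equation, use elliptic regularity, $J$-perturbations and unique continuation) is the same as the paper's, but two of your pivotal steps have genuine gaps. First, you ground the cokernel-killing step on the assertion that contact instantons are somewhere injective on a dense set of interior points, and on a chart $U$ with $w^{-1}(U)$ a disk around $z$. That is exactly the hypothesis the paper declines to assume: elements of this moduli space may be multiply covered, and the paper's proof explicitly \emph{bypasses} somewhere injectivity by using that the asymptotic limits are iso-speed Reeb chords with $T \neq 0$ which, by the genericity statement of Theorem \ref{thm:Reeb-chords}(3), cannot be closed; a multiplicity-parity argument near the puncture then shows the local multiplicity cannot be constant there, and unique continuation still forces the annihilator to vanish on $\Sigma \setminus \{z_0\}$. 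Without an argument of this kind your proof simply does not cover non-somewhere-injective $w$. Moreover, your restriction to points with $d^\pi w(z) \neq 0$ followed by ``propagating by openness'' goes the wrong way: submersivity at a point is an open condition, so establishing it on a dense set of marked points gives nothing at the remaining points --- which include precisely the $\Xi$-tangency points relevant later. The paper's argument imposes no condition at $z_0$ whatsoever.

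Second, you omit the analysis of the point-supported part of the annihilator, which is the heart of the paper's proof. After the $J$-perturbation and unique continuation step, the pair $(\eta,f)$ is only known to vanish on $\Sigma \setminus \{z_0\}$, and the adjoint equation it satisfies is $(D_w\Upsilon)^\dagger(\eta,f) = \delta_{z_0} X_p$, not the homogeneous one; one must still rule out a delta-type contribution at $z_0$. The paper does this in Lemma \ref{lem:eta=0} (proved in the form of Lemma \ref{lem:eta=01jet}) via the structure theorem for distributions with point support (Lemma \ref{lem:gelfand}): since $(\eta,f) \in (W^{1,p})^* \cong W^{-1,q}$, the point-supported part has order zero, so $\eta = \beta_{z_0}\delta_{z_0}$; because the lower-order coefficients of the linearization vanish at $z_0$, $(\eta,f)$ is then a distributional solution of the homogeneous adjoint equation on all of $\Sigma$, hence smooth by elliptic regularity, hence zero by continuity. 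Only after this does the relation collapse to $(Y(z_0),X_p)_{z_0}=0$, and $X_p=0$ follows from surjectivity of $Y \mapsto Y(z_0)$ onto $T_pM$ --- with no need for $d^\pi w(z_0)\neq 0$ or for $w^*\lambda$ to be nontrivial at $z_0$. Your proposal instead tries to kill $\eta \in T^*_{w(z)}M$ directly with local $J$-perturbations and the $v$/Reeb variation; this conflates the two dual variables ($L\circ d^\pi w \circ j$ pairs with the $\Omega^{(0,1)}(w^*\Xi)$-component of the cokernel, not with $T^*_{w(z)}M$) and skips the point-support step entirely, so as written the argument does not close.
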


Because the details of the proof of the main theorem of the 1-jet evaluation transversality 
largely recycle those used in the proof of this theorem in \cite{oh:contacton-transversality},
we will borrow them later in Section \ref{sec:1jet-proof} which
in turn follows the canonical scheme of the generic 1-jet transversality results proved
in \cite{oh-zhu:ajm}, \cite{oh:higher-jet} for the case of pseudoholomorphic curves in symplectic geometry.

An important ingredient in their proofs is the following
structure theorem  of the distributions with point support from
\cite[Section 4.5]{gelfand},  \cite[Theorem 6.25]{rudin} whose proof we refer readers thereto.

\begin{lem}[Distribution with point support]\label{lem:gelfand}
\index{distribution with point support}
Suppose $\psi$ is a distribution on open
subset $\Omega \subset \R^n$ with $\supp \psi = \{p\}$ and of finite
order $N < \infty$. Then $\psi$ has the form
$$
\psi = \sum_{|\alpha| \leq N} D^\alpha \delta_p
$$
where $\delta_p$ is the Dirac-delta function at $p$ and $\alpha =
(\alpha_1,\ldots, \alpha_n)$ is the multi-indices.
\end{lem}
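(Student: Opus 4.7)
The plan is to reduce the claim to the statement that $\psi$ annihilates any test function whose derivatives up to order $N$ vanish at $p$, and then identify $\psi$ with a linear functional on the finite-dimensional jet space $J^N_p$ at $p$. After translating I may assume $p = 0 \in \Omega$. By the definition of finite order $N$, there exist a compact neighborhood $K \subset \Omega$ of $0$ and a constant $C > 0$ such that
\begin{equation*}
|\langle \psi, \varphi \rangle | \le C \sum_{|\alpha| \le N} \sup_{K} |D^\alpha \varphi|
\qquad \text{for every } \varphi \in C_c^\infty(\Omega).
\end{equation*}

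The key step is the following claim: if $\varphi \in C_c^\infty(\Omega)$ satisfies $D^\alpha \varphi(0) = 0$ for all $|\alpha| \le N$, then $\langle \psi, \varphi \rangle = 0$. To prove it, fix $\chi \in C_c^\infty(\R^n)$ with $\chi \equiv 1$ near $0$ and $\supp \chi \subset B_1(0)$, and set $\chi_\epsilon(x) = \chi(x/\epsilon)$. Since $\supp \psi = \{0\}$, we have $\langle \psi, \varphi \rangle = \langle \psi, \chi_\epsilon \varphi \rangle$ for every $\epsilon > 0$ small. Taylor's theorem applied at the origin, using the assumed vanishing of derivatives, gives $|D^\beta \varphi(x)| \le C'\, |x|^{N+1-|\beta|}$ for $|\beta| \le N$. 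Combined with the scaling bound $|D^\gamma \chi_\epsilon| \le C_\gamma \epsilon^{-|\gamma|}$ and Leibniz, one obtains, on $\supp \chi_\epsilon$,
\begin{equation*}
|D^\alpha(\chi_\epsilon \varphi)(x)| \le C'' \epsilon^{\,N+1 - |\alpha|}
\qquad \text{for all } |\alpha| \le N.
\end{equation*}
Feeding this into the finite-order estimate yields $|\langle \psi, \varphi \rangle| = |\langle \psi, \chi_\epsilon \varphi \rangle| \le C''' \epsilon \to 0$, proving the claim.

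With the claim in hand, the map $T : C_c^\infty(\Omega) \to \R^{\#\{|\alpha| \le N\}}$ defined by $T(\varphi) = (D^\alpha \varphi(0))_{|\alpha|\le N}$ has kernel contained in $\ker \psi$, so $\psi$ factors through $T$. Since $T$ has values in a finite-dimensional space and is surjective (using bump functions with prescribed jets), there exist constants $c_\alpha$ with
\begin{equation*}
\langle \psi, \varphi \rangle = \sum_{|\alpha|\le N} c_\alpha\, D^\alpha \varphi(0)
= \Bigl\langle \sum_{|\alpha|\le N} (-1)^{|\alpha|} c_\alpha\, D^\alpha \delta_0,\, \varphi \Bigr\rangle,
\end{equation*}
which is the desired representation after relabeling the coefficients and translating back to $p$. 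The main technical point is the Leibniz/Taylor power count in the claim: one must track that the vanishing of jets gives exactly enough decay of $\varphi$ near $0$ to absorb the blow-up of derivatives of $\chi_\epsilon$, leaving a net positive power of $\epsilon$. Everything else is standard bookkeeping.
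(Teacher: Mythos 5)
Your proof is correct, and it is essentially the standard argument (cutoff $\chi_\epsilon$ plus Taylor/Leibniz power counting, then factoring through the finite-dimensional $N$-jet at $p$) found in the references the paper cites for this lemma — Rudin, Theorem 6.25, and Gelfand--Shilov — since the paper itself gives no proof and explicitly defers to those sources. One cosmetic point: your conclusion correctly carries the constant coefficients $c_\alpha$ in front of $D^\alpha\delta_p$, which the paper's statement suppresses but which are of course needed.
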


We start with the case of interior marked point and consider the map
\bea\label{eq:aleph1}
\aleph_0 & : & \CJ_\lambda 
\times \widetilde \CF_{(0,1)}(\dot \Sigma,M) \times \CM_{\dot \Sigma} \to \CC\CD
\times M \nonumber \\
&{}& (J,(j,w),z_0)  \mapsto  (\Upsilon(J,(j,w)),w(z_0)).
\eea
Here the subindex $0$ in $\aleph_0$ stands for the `0-jet'.

We denote by $\pi_i$ the projection from
$\CJ_\lambda \times \widetilde \CF_{(0,1)}(\dot \Sigma,M)$ to the $i$-th factor with $i=1, \, 2$,
and let $p \in M$.
Then we introduce
 \beastar
\widetilde \CM_{(0,1)}(\dot \Sigma,M;\{p\}) & = & \aleph_0^{-1}(o_{\CC\CD} \times \{p\} )\\
\widetilde \CM_{(0,1)}(\dot \Sigma, M;\{p\};J)
& = & \widetilde \CM_{(0,1)}(\dot \Sigma, M;\{p\}) \cap \pi_1^{-1}(J).
\eeastar

The following is a fundamental proposition for the proof of  \cite[Theorem 1.8]{oh:contacton-transversality}
as in the standard strategy exercised in the similar transversality result for the study of
pseudoholomorphic curves in  \cite[Section 10.5]{oh:book1} which in turn follows the scheme used in
\cite{oh-zhu:ajm}  for the 1-jet transversality proof for the case of pseudoholomorphic curves.

\begin{prop}[Proposition 7.2 \cite{oh:contacton-transversality}]\label{prop:0-jet}
The map $\aleph_0$ is transverse to the submanifold
$$
o_{\CC\CD} \times \{p\} \subset \CC\CD \times M.
$$
\end{prop}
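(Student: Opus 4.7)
Fix $(J_0,(j_0,w_0),z_0)\in \aleph_0^{-1}(o_{\CC\CD}\times\{p\})$, so that
$\Upsilon(J_0,(j_0,w_0))=0$ and $w_0(z_0)=p$. The tangent space to
$o_{\CC\CD}\times\{p\}$ at $(0_{w_0},p)$ is identified with the $\CF$-factor
inside $T_{(J_0,(j_0,w_0))}\CF\oplus \CC\CD_{w_0}\oplus T_pM$, so
transversality reduces to showing that the linear map
\be
(L,(a,Y),\xi)\longmapsto
\bigl(D\Upsilon(J_0,(j_0,w_0))(L,(a,Y)),\ Y(z_0)+dw_0(z_0)\xi\bigr)
\ee
has range equal to $\CC\CD_{w_0}\oplus T_pM$. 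The range is closed because
$D\Upsilon(w_0)$ is Fredholm and the evaluation is a bounded, finite-rank
perturbation, so it suffices to show the cokernel vanishes. Suppose
$(\eta^*,v^*)\in \CC\CD_{w_0}^*\oplus T_p^*M$ annihilates the range, i.e.
\be
\langle \eta^*,D\Upsilon(L,(a,Y))\rangle + \langle v^*,Y(z_0)+dw_0(z_0)\xi\rangle
= 0
\ee
for every admissible $(L,(a,Y),\xi)$.

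The first step constrains $\supp \eta^*$ using $J$-variations. With
$a=0$, $Y=0$, $\xi=0$ and varying only $L$, the formula
$D_J\Upsilon(L)(w_0)=\bigl(\tfrac12 L\circ d^\pi w_0\circ j,\ 0\bigr)$ from
Theorem~\ref{thm:linearization} gives
$\langle \eta_1^*,\, L\circ d^\pi w_0\circ j\rangle=0$ for every admissible
endomorphism field $L$. The usual bump-function argument in charts forces the
distribution $\eta_1^*$ to vanish on the open set
$U=\{z : d^\pi w_0(z)\neq 0\}$; by the unique continuation theorem for
contact instantons, the complement $\dot\Sigma\setminus U$ is a discrete set
$S\subset \dot\Sigma$. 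A parallel argument for $\eta_2^*$ using the Laplacian
block of \eqref{eq:matrixDUpsilon} shows $\supp \eta_2^*\subset S$ as well,
so $\supp \eta^*\subset S$.

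Because $\eta^*$ is a distribution of finite order supported on the discrete
set $S$, Lemma~\ref{lem:gelfand} in a chart around each $z_*\in S$ writes
$\eta^*$ locally as $\sum_{|\alpha|\le N}c_{z_*,\alpha}D^\alpha\delta_{z_*}$.
Now vary only $Y$, keeping $L=0$, $a=0$, $\xi=0$, to obtain
\be
\langle \eta^*,D\Upsilon(w_0)Y\rangle + \langle v^*,Y(z_0)\rangle=0
\ee
for every admissible $Y$. For $Y$ supported in a tiny neighborhood of some
$z_*\in S\setminus\{z_0\}$ the $v^*$-term drops out; choosing the jet of $Y$
at $z_*$ appropriately and using that $D\Upsilon(w_0)$ has elliptic principal
symbol \eqref{eq:symbol} so that it acts surjectively on jet spaces modulo
lower-order terms, one eliminates every coefficient $c_{z_*,\alpha}$ in order
from the top down. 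This kills all contributions to $\eta^*$ away from $z_0$.

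The main obstacle is the case $z_0\in S$, where $\eta^*$ may concentrate at
the evaluation point and the two pairings in the cokernel relation cannot be
separated by supports alone. The remedy is to test against $Y=\chi\, v$
with $v\in T_pM$ arbitrary and $\chi$ a scalar bump centered at $z_0$ whose
$N$-jet at $z_0$ is prescribed. Expanding $D\Upsilon(w_0)(\chi v)$ via the
matrix form \eqref{eq:matrixDUpsilon} and pairing with the finite sum
$\sum c_{z_0,\alpha}D^\alpha \delta_{z_0}$ produces a finite triangular linear
system in the jet data of $\chi$; solving from the highest-order term downward
forces $c_{z_0,\alpha}=0$ for $\alpha\neq 0$. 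The remaining
$\delta_{z_0}$-pairing combines with $\langle v^*,v\rangle$, and the freedom
in $v$ yields both $v^*=0$ and $c_{z_0,0}=0$. Hence $(\eta^*,v^*)=0$, the
cokernel vanishes, and $\aleph_0$ is transverse to $o_{\CC\CD}\times\{p\}$.
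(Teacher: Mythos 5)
Your overall skeleton (Hahn--Banach/cokernel, $J$-variations to constrain the support of the annihilator, structure theorem for point-supported distributions, then eliminate the mass at $z_0$ and conclude $X_p=v^*=0$) is the same as the paper's, but the decisive step is asserted rather than proved. The claim that the bump-function argument kills $\eta_1^*$ on all of $U=\{z: d^\pi w_0(z)\neq 0\}$ is not justified: the perturbation $L\in T_J\CJ_\lambda$ lives on the target $M$, so localizing $L$ near $w_0(z)$ produces contributions from \emph{every} preimage of that region, and at a point where $w_0$ is not injective (in particular for a multiply covered instanton) the contributions of the different sheets can cancel, so one cannot conclude that $\eta_1^*$ vanishes near $z$ merely because $d^\pi w_0(z)\neq 0$. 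What the argument actually gives is vanishing of $\eta$ near \emph{somewhere injective} points; the paper then (i) uses that $(\eta,f)$ is a classical solution of the elliptic adjoint system on $\Sigma\setminus\{z_0\}$, so unique continuation propagates the vanishing, and (ii) devotes its key paragraph to bypassing the somewhere-injectivity hypothesis for contact instantons, via the nonconstant iso-speed Reeb chord asymptotics at a puncture together with the genericity statement of Theorem \ref{thm:Reeb-chords} (the chord is not closed, so the local multiplicity of $w$ cannot be constant near the puncture). Your proposal skips this entirely, and without it the proof fails precisely in the multiply covered case, which is the case this machinery is designed to handle.

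Two further points. First, $\Upsilon_2$ does not depend on $J$, so $J$-variations say nothing about the second component $f=\eta_2^*$; your ``parallel argument using the Laplacian block'' would be a $Y$-variation argument in which the evaluation term $\langle v^*,Y(z_0)\rangle$ and the off-diagonal couplings of \eqref{eq:matrixDUpsilon} do not drop out, so $\supp \eta_2^*\subset S$ is not established as stated. Second, the top-down jet elimination at $z_0$ is both under-argued (``acts surjectively on jet spaces modulo lower-order terms'' is not a proof) and unnecessary: since $\eta^*\in (W^{1,p})^*\cong W^{-1,q}$, Lemma \ref{lem:gelfand} forces the point-supported part to have order zero, $\eta=\beta_{z_0}\delta_{z_0}$; the paper then tests against $Y-\chi\, Y(z_0)$ to show that $(\eta,f)$ is a weak, hence smooth, solution of the adjoint system on all of $\Sigma$, and vanishes by continuity, after which $X_p=0$ follows from surjectivity of $Y\mapsto Y(z_0)$. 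Incorporating the order-zero observation and, above all, an argument covering the non-somewhere-injective case is needed before your proposal can be considered a proof.
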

We again apply the same scheme with the replacement of pseudoholomorphic curves 
by contact instantons for the 1-jet case in the present paper.
Because the nature of equation is different, especially \emph{because the contact instanton
equation involves the second derivatives}, the proof involves additional complication beyond
that of \cite{oh-zhu:ajm}.

Since the off-shell Fredholm setting used for the proof of this proposition will 
largely reappear  for the 1-jet transversality, we recall the relevant functions spaces
and briefly indicate the main part of the proof now.

Firstly, the linearization $D\aleph_0(J,(j,w),z)$ is given by the map
\be\label{eq:DUpsilon} (L,(b,Y),v) \mapsto
\left(D_{J,(j,w)}\Upsilon(L,(b,Y)), Y(w(z)) + dw(z)(v)\right) \ee
for
$$
L  \in T_J\CJ_\lambda, \, b \in T_j\CM_{\dot \Sigma}, \, v \in
T_z \dot \Sigma , \, Y \in T_w\FF(\Sigma, M;\beta).
$$
This defines a linear map
$$
T_J\CJ_\lambda \times T_w\FF(\Sigma, M;\beta)
\times T_z \dot \Sigma \times T_j\CM_{\dot \Sigma}  \to \CC\CD_{(J,(j,w))} \times T_{w(z)}M
$$
on $W^{1,p}$. But for the map $\aleph_0$ to be differentiable, we need to choose a suitable
completion of $\FF(\dot \Sigma,M)$.

We take the Sobolev completion in the $W^{k,p}$-norm for at least $k \geq 2$. We take $k = 2$.
We would like to prove that this linear map is a submersion
at every element $(J, j, w, z_0) \in \widetilde\CM_1(\dot \Sigma,M)$ i.e.,
at the pair $(w,z_0)$ satisfying
$$
\Upsilon(J,(j,w)) = 0, \quad w(z_0) = p.
$$
For this purpose, we need to study solvability of the system of
equations
 \be D_{J,(j,w)}\Upsilon(L,(b,Y),v) = (\gamma, \omega), \quad
Y(w(z_0)) + dw(v) = X_0
\ee
for any given $(\gamma, \omega) \in \CC\CD_{(J,(j,w))}$ and $X_0$, i.e.,
\be\label{eq:gamma-omega-X0}
\begin{cases}
\gamma \in \Omega_{(j,J)}^{\pi(0,1)}(w^*\Xi), \quad \omega \in  \Omega^2(\dot \Sigma), \\
X_0 \in T_{w(z_0)}M.
\end{cases}
\ee
For the current study of evaluation transversality,
the domain complex structure $j$ does not play much
role in our study. Especially it does not play any role throughout
our calculations except that it appears as a parameter. Therefore we will fix $j$
throughout the proof. Then it
will be enough to consider the case $b = 0 $. Then the above equation
is reduced to
\be\label{eq:b=0}
D_{J,w}\Upsilon (L,Y) = (\gamma,\omega), \quad Y(w(z_0)) + dw(v) = X_0.
\ee

Firstly,  we study \eqref{eq:b=0} for $Y \in W^{2,p}$. We regard
$$
\CC\CD_{(J,(j,w))} \times T_{w(z_0)}M
$$
as a Banach space with the norm $\|\cdot \|_{1,p} + \|\cdot \|_p +  |\cdot|$,
where $|\cdot|$ is any norm induced by an inner product on $T_xM$.

The following explicit characterization from \cite{oh:contacton-transversality}
of the $L^2$-adjoint of the differential operator
$D\Upsilon(J,(j,w))$ is crucial for the study of $\coker D\Upsilon(J,(j,w))$ and
$\coker D\aleph_0(J,(j,w),z)$. Actually \cite[Proposition 4.4]{oh:contacton-transversality} 
handles more general cases than what we need here, since it considers the more complicated case of
the boundary evaluation transversality. Here we state a somewhat simpler version thereof
considering the interior marked points instead of the boundary ones therefrom.

\begin{prop}[Compare with Proposition 4.4 \cite{oh:contacton-transversality}]
\label{prop:L2-cokernel} 
Consider the pair
$$
(\eta,g) \in \CC\CD_{(J,(j,w))}
$$
and recall the correspondence
$$
(\eta,g) \longleftrightarrow \eta + g\, R_\lambda
$$
given in terms of the decomposition $TM = \Xi \oplus \R\langle R_\lambda \rangle$. 

Suppose that the pair $(\eta,g)$ lies in the $L^2$-cokernel of $D\Upsilon^{\CL eg}(w,R)$.
Then the following hold and vice versa:
\begin{enumerate}
\item
$\eta$ satisfies
\be\label{eq:adjoint-eta}
(\delta^{\nabla^{\pi(0,1)} } + T^{\pi(1,0)} + B^{(1,0)}) (\eta^{(1,0)})
- J  \langle dg, dw \rangle  = 0  \quad \text{\rm on }\,\dot \Sigma.
\ee
\item  The function $g$ satisfies
\be\label{eq:adjoint-g}
\Delta g -  \frac 12 g \left \langle (\CL_{R_\lambda}J)( J d^\pi w), \eta
\right \rangle  = 0,  \quad \text{\rm on }\, \dot \Sigma
\ee
\end{enumerate}
\end{prop}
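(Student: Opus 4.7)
The plan is to unpack the $L^2$-cokernel condition and read off the adjoint equations via integration by parts. By definition, $(\eta,g)$ lies in $\operatorname{coker} D\Upsilon(w)$ if and only if
\begin{equation*}
\int_{\dot\Sigma}\langle D\Upsilon_1(w)(Y),\eta\rangle\, dA + \int_{\dot\Sigma} g\cdot D\Upsilon_2(w)(Y) \;=\; 0
\end{equation*}
for every smooth test field $Y$; by density I may take $Y$ to be smooth and compactly supported in the interior, which disposes of all boundary terms that would otherwise appear in the more general setup of \cite[Proposition 4.4]{oh:contacton-transversality}. Writing $Y = Y^\pi + f R_\lambda$ with $f=\lambda(Y)$, the two summands are free to vary independently, so I test separately against $(Y^\pi\equiv 0,\,f\text{ free})$ and $(f\equiv 0,\,Y^\pi\text{ free})$, obtaining the two decoupled distributional PDEs.

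For the first family, formulas \eqref{eq:DUpsilon1}--\eqref{eq:DUpsilon2} reduce to $D\Upsilon_1(fR_\lambda)=\tfrac12 f\,(\CL_{R_\lambda}J)J\del^\pi w$ and $D\Upsilon_2(fR_\lambda)= -\Delta f\,dA$. Two integrations by parts, using self-adjointness of the Laplacian on compactly supported $0$-forms, move $\Delta$ from $f$ onto $g$, and the pairing collapses to $\int_{\dot\Sigma} f\bigl[\tfrac12\langle (\CL_{R_\lambda}J)J\del^\pi w,\eta\rangle - \Delta g\bigr] dA$. Arbitrariness of $f$ yields the stated equation \eqref{eq:adjoint-g}.

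For the second family I must identify the formal adjoint of the first row of the matrix \eqref{eq:matrixDUpsilon}. Three ingredients combine: (i) the $L^2$-adjoint of the contact Cauchy--Riemann operator $\delbar^{\nabla^\pi}$ with respect to the triad metric is $\delta^{\nabla^{\pi(0,1)}}$ applied to the $(1,0)$-part $\eta^{(1,0)}$, a Kodaira-type computation that relies on the Hermitian compatibility of the contact triad connection with $J|_\Xi$; (ii) the zeroth-order algebraic tensors $T_{dw}^{\pi,(0,1)}$ and $B^{(0,1)}$ dualize pointwise to their $(1,0)$-counterparts $T^{\pi(1,0)}$ and $B^{(1,0)}$ acting on $\eta^{(1,0)}$; (iii) the off-diagonal term $d((Y^\pi\rfloor d\lambda)\circ j)$ paired against $g$ produces, after one Stokes integration by parts, the wedge integral $-\int_{\dot\Sigma} dg\wedge (Y^\pi\rfloor d\lambda)\circ j$, which the compatibility $d\lambda(\cdot,J\cdot)=\langle\cdot,\cdot\rangle$ on $\Xi$ rearranges into the $L^2$-pairing of $Y^\pi$ against $-J\langle dg,dw\rangle$. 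Summing the three contributions and invoking density of compactly supported sections of $w^*\Xi$ yields \eqref{eq:adjoint-eta}.

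The converse direction is simply the same integrations by parts run in reverse. The main obstacle is ingredient (iii): one has to recognize that the off-diagonal coupling $d((\cdot)\rfloor d\lambda\circ j)$ inside $D\Upsilon$ produces, after $L^2$-dualization, the intrinsic term $J\langle dg,dw\rangle$ that appears in \eqref{eq:adjoint-eta}. This is precisely where the geometric information packaged by the contact triad connection of \cite{oh:contacton} is used, and the calculation follows exactly the corresponding step in the proof of \cite[Proposition 4.4]{oh:contacton-transversality}, with the Legendrian boundary contributions suppressed by the interior test-field hypothesis.
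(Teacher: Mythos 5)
Your derivation of the two equations is the standard one and, as far as one can tell, the same route as the proof this paper defers to (the paper itself gives no argument, citing \cite{oh:contacton-transversality}): pair the explicit linearization \eqref{eq:DUpsilon1}--\eqref{eq:DUpsilon2} against $(\eta,g)$, test with the two independent families $Y=fR_\lambda$ and $Y=Y^\pi$, and integrate by parts, the only real work being the two cross terms --- the zeroth-order term $\frac12 f(\CL_{R_\lambda}J)J\del^\pi w$ paired against $\eta$, and the Stokes computation converting $d((Y^\pi\rfloor d\lambda)\circ j)$ paired against $g$ into the term $J\langle dg,dw\rangle$ via the compatibility $d\lambda(\cdot,J\cdot)=g_\Xi$ on $\Xi$. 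One remark: your computation yields $\Delta g-\frac12\langle(\CL_{R_\lambda}J)J\del^\pi w,\eta\rangle=0$, i.e.\ \emph{without} the extra factor of $g$ printed in \eqref{eq:adjoint-g}; this is actually forced, since the cokernel of a linear operator must be cut out by equations linear in $(\eta,g)$, so the printed factor should be read as a typo rather than something your argument fails to reproduce.

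The one genuine soft spot is the ``vice versa'' direction, which you dismiss as the same integrations by parts run in reverse. Having (legitimately, for the forward direction) restricted to test fields compactly supported in the interior, the reverse implication does not follow from the interior equations alone: membership in the $L^2$-cokernel requires the pairing to vanish for \emph{all} admissible $Y$, namely those satisfying the Legendrian boundary condition $Y^\pi(\del\dot\Sigma)\subset T\vec R$, $\lambda(Y)(\del\dot\Sigma)=0$ and the strip-like asymptotics, and for such $Y$ the reversed integration by parts produces boundary terms along $\del\dot\Sigma$ involving the traces of $\eta$, $g$ and $dg$ (and contributions at the punctures, which are killed by the decay of admissible $Y$ but must at least be mentioned). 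These boundary terms vanish only if $(\eta,g)$ satisfies the boundary conditions recorded in the full statement of \cite{oh:contacton-transversality}, which the ``simpler version'' quoted here suppresses; so either the converse should be restricted to the boundaryless or interior-supported setting, or you need to state and use those boundary conditions on $(\eta,g)$ before reversing the computation.
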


The equations \eqref{eq:adjoint-eta} and \eqref{eq:adjoint-g}
are nothing but the analog of the equation right above \cite[Equation (3.13)]{oh:fredholm}.
In terms of the isothermal coordinates $(x,y)$ adapted to $\del \dot \Sigma$ and
in the usual convention for the calculus of vector valued differential forms
(see \cite[Appendix B]{oh-wang:CR-map1}), we can
express
\be\label{eq:eta10}
\langle dg, dw \rangle : =  dg \wedge * dw
= \left(\frac{\del g}{\del x} \frac{\del w}{\del x} + \frac{\del g}{\del y} \frac{\del w}{\del y}\right)
\ee
for the one-form $\eta = \eta_x \, dx + \eta_y \, dy$.

Then the following then finishes the proof by the ellipticity
of the linearization map. 

\begin{prop}\label{prop:0jet-dense}
The subspace
$$
\Image \aleph_0 \subset \CB_0 \oplus T_{w(z_0)}M
$$
is dense.
\end{prop}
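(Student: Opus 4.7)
The plan is to argue by contradiction via Hahn--Banach. If the linearized image were not dense in $\CB_0 \oplus T_{w(z_0)}M$, then, using $L^2$-duality on the first factor and Riesz representation on the second, I can produce a nonzero triple $((\eta,g),X^*)$ with $(\eta,g)\in L^2$ on $\dot \Sigma$ and $X^*\in T^*_{w(z_0)}M$ satisfying
\begin{equation*}
\int_{\dot \Sigma} \left\langle D\Upsilon(w)(Y) + \tfrac{1}{2} L \circ d^\pi w \circ j,\, (\eta,g) \right\rangle + \left\langle Y(w(z_0)) + dw(z_0)(v),\, X^* \right\rangle = 0
\end{equation*}
for every admissible $(L,Y,v)$. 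Testing with $L=0=v$ and $Y$ compactly supported in $\dot\Sigma\setminus\{z_0\}$ together with elliptic regularity and Proposition \ref{prop:L2-cokernel} show that $(\eta,g)$ is smooth on $\dot\Sigma\setminus\{z_0\}$ and satisfies \eqref{eq:adjoint-eta}--\eqref{eq:adjoint-g} there. Allowing $Y$ to have support meeting $z_0$ promotes this to the distributional identity $D\Upsilon(w)^*(\eta,g) = -X^*\delta_{z_0}$ globally on $\dot\Sigma$; by Lemma \ref{lem:gelfand} the right-hand side is a finite sum of derivatives of $\delta_{z_0}$, but since the evaluation contribution is of order zero only the Dirac mass itself survives.

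The decisive step is to kill the contact-horizontal piece $\eta$ using the $\CJ_\lambda$-freedom. Setting $Y=0=v$ and varying $L\in T_J\CJ_\lambda$ yields the orthogonality relation
\begin{equation*}
\int_{\dot\Sigma}\left\langle L\circ d^\pi w\circ j,\,\eta\right\rangle = 0,\qquad L\in T_J\CJ_\lambda.
\end{equation*}
At any interior point $z_*\neq z_0$ where $d^\pi w(z_*)\neq 0$ and $w$ is injective at $z_*$ (such points form an open dense subset for a nontrivial contact instanton, as for simple pseudoholomorphic curves), I construct a localized variation $L=\phi A$ with $A$ a $J$-anti-commuting endomorphism of $\Xi$ and $\phi$ a bump supported near $w(z_*)$, chosen so that $L\circ d^\pi w\circ j=\phi\,\eta$ in a neighborhood of $z_*$. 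The orthogonality relation then forces $\eta=0$ on that neighborhood, and unique continuation for the first-order elliptic equation \eqref{eq:adjoint-eta} (which is of Cauchy--Riemann type up to zero-order coupling to $g$) propagates $\eta\equiv 0$ to all of $\dot\Sigma\setminus\{z_0\}$, hence a.e.\ on $\dot\Sigma$.

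With $\eta=0$, the distributional adjoint equation decouples along the splitting $T^*_{w(z_0)}M = \Xi^*_{w(z_0)}\oplus \R\langle\lambda_{w(z_0)}\rangle$ into $\Delta g=-X^*_\lambda\,\delta_{z_0}$ and $J\langle dg,dw\rangle = -X^*_\Xi\,\delta_{z_0}$. Since $g$ lies in the Sobolev class of Definition \ref{defn:tangent-space} it cannot accommodate the $\log$-type fundamental solution of $\Delta$ at $z_0$, forcing $X^*_\lambda=0$; the remaining equation $\Delta g=0$ with the boundary/decay conditions built into the function space gives $g\equiv 0$ by the maximum principle, after which the $\Xi$-component identity reads $0=-X^*_\Xi\delta_{z_0}$, i.e.\ $X^*_\Xi=0$. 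Thus $((\eta,g),X^*)=0$, contradicting Hahn--Banach. The most delicate ingredient is the construction of the localized $L$ realizing an arbitrary prescribed value of $L\circ d^\pi w\circ j$ at the chosen injective-and-$\pi$-immersed point; the standard verification that $T_J\CJ_\lambda$ is rich enough for this is carried out in \cite{oh:contacton,oh:contacton-transversality}, and I rely on it here.
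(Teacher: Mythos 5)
Your overall scheme is the paper's: annihilate the image by a dual pair via Hahn--Banach, take $L=0$ to get the adjoint equation with a $\delta_{z_0}$ source, take $Y=0$ to get orthogonality against $L\circ d^\pi w\circ j$, kill $\eta$ near suitable points and propagate by unique continuation. But there are three genuine gaps. First, the dual element is \emph{not} in $L^2$: the codomain is normed by $\|\cdot\|_{1,p}+\|\cdot\|_p+|\cdot|$, so $(\eta,g)$ lies in $(W^{1,p})^*\times(L^p)^*\cong W^{-1,q}\times L^q$. Consequently, after you know $\eta$ vanishes on $\dot\Sigma\setminus\{z_0\}$ you only know $\supp\eta\subset\{z_0\}$; to conclude $\eta=0$ one must apply the point-support structure theorem (Lemma \ref{lem:gelfand}) to $\eta$ itself --- the order is forced to be zero precisely because derivatives of $\delta_{z_0}$ do not lie in $W^{-1,q}$ --- and then run the cut-off argument with $\widetilde Y=Y-\chi\,Y(z_0)$ of Lemma \ref{lem:eta=01jet} to show $\langle\delbar^\pi Y^\pi,\eta\rangle=0$ for \emph{all} $Y$, whence $\eta$ is a global weak solution, continuous, and hence zero. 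Your ``hence a.e.\ on $\dot\Sigma$'' skips exactly this step, resting on the unjustified $L^2$ premise, and your invocation of Lemma \ref{lem:gelfand} is aimed at the right-hand side $X^*\delta_{z_0}$, where it is not needed.

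Second, your endgame for $g$ and the Reeb component of $X^*$ does not work as written: $g$ is a dual variable (an element of $L^q$ at best), not an element of the domain class of Definition \ref{defn:tangent-space}, and the two-dimensional fundamental solution $\tfrac{1}{2\pi}\log|z-z_0|$ belongs to $L^q_{\mathrm{loc}}$ for every $q<\infty$, so ``$g$ cannot accommodate the log-type fundamental solution'' does not force $X^*_\lambda=0$; likewise the maximum-principle step requires boundary/decay control on $g$ that you have not established for a dual element. The paper avoids this entirely by the same support-at-$z_0$ plus cut-off argument, which yields $\int\lambda(Y)\,\Delta f\,dA=0$ for all $\lambda(Y)$, hence $f$ is a global weak harmonic solution, continuous, and zero since it already vanishes off $z_0$; only after $(\eta,f)=0$ does the identity collapse to $(Y(z_0),X_p)_{z_0}=0$, and surjectivity of $Y\mapsto Y(z_0)$ kills $X_p$. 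Third, your assertion that injective points with $d^\pi w\neq0$ are dense presupposes $w$ somewhere injective, which is not a hypothesis of the proposition; the paper explicitly bypasses this using the asymptotics of finite-energy solutions (iso-speed Reeb chords with $T\neq0$) and the genericity statement of Theorem \ref{thm:Reeb-chords}, an ingredient your argument must either import or replace.
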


\begin{notation}
For the clarification of notations, we denote the natural pairing
$$
\CB \times \CB^* \to \R
$$
by $\langle \cdot, \cdot \rangle$ and the inner product on $T_xM$ by $(\cdot,
\cdot)_{x}$.
\end{notation}

The following characterization of the critical point is now obvious to
see, which however is a key ingredient for the Fredholm framework 
used for the off-shell setting of our proof of evaluation transversality against
the contact distribution in \cite{oh:contacton-transversality}.

We recall the splitting $TM = \Xi \oplus \R \langle R_\lambda \rangle$.
\begin{lem}
For any $((j,w),z) \in \widetilde \CM_{(0,1)}(\dot \Sigma, M)$, since
$\delbar^\pi_{J,j}w=0$, we have \be \label{eq:key} w^*\lambda(z) = 0 \quad
\mbox{if and only if} \quad \del^\pi_{(j,J)}w(z) \in \Lambda^1(\Xi_{w(z)}).
\ee
\end{lem}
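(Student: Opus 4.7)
My plan is to derive the stated equivalence by a direct pointwise calculation in the Reeb--$\Xi$ splitting, then invoke the contact instanton equation to identify the Reeb component. There is nothing analytic here beyond the tautological decomposition of $dw$ and a reality check on a real $1$-form.

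First I would write out the tautological splitting coming from $TM = \Xi \oplus \R\langle R_\lambda\rangle$,
$$
dw = d^\pi w + w^*\lambda \otimes R_\lambda,
$$
and pass to $(1,0)$-parts with respect to $j$ on $\dot\Sigma$ and $J$ on $\Xi$, noting that the $R_\lambda$-line is a real trivial line bundle:
$$
\del_{(j,J)} w = \del^\pi_{(j,J)} w + (w^*\lambda)^{(1,0)} \otimes R_\lambda.
$$
The hypothesis $\delbar^\pi_{J,j} w = 0$ forces $d^\pi w = \del^\pi_{(j,J)} w$, so the $\pi$-projection of the differential is automatically $(1,0)$ and $\Xi$-valued. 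Consequently, whether $\del_{(j,J)} w(z)$ lies inside $\Xi_{w(z)}$ reduces to the vanishing of the remaining Reeb-component $(w^*\lambda)^{(1,0)}(z)\otimes R_\lambda$.

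Next I would invoke reality of the $1$-form $w^*\lambda$ on $\dot\Sigma$. In isothermal coordinates $(x,y)$ with $j\,\del_x = \del_y$, writing $w^*\lambda = a\,dx + b\,dy$ one has
$$
(w^*\lambda)^{(1,0)} = \tfrac{1}{2}(a - i b)\, dz,
$$
so that $(w^*\lambda)^{(1,0)}(z) = 0$ if and only if $a(z) = b(z) = 0$, i.e.\ iff $w^*\lambda(z) = 0$ as a real $1$-form on $T_z\dot\Sigma$. Combining with the preceding step yields the claimed biconditional.

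The argument is essentially pointwise linear algebra once $\delbar^\pi w = 0$ is used, and I do not anticipate any real obstacle. The only mild point of care is the reality check for $(w^*\lambda)^{(1,0)}$: it ensures that the complex condition ``$(1,0)$-part vanishes'' is equivalent to the real condition ``$w^*\lambda$ vanishes'', which is what lets us translate tangency of $dw(z)$ to $\Xi$ into the single scalar equation $w^*\lambda(z) = 0$ that drives the Fredholm setup used in the proof of Theorem \ref{thm:1-jet}.
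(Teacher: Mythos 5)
Your proof is correct and is essentially the argument the paper has in mind: the paper states this lemma without written proof (introducing it as ``now obvious to see''), and your pointwise use of the splitting $TM = \Xi \oplus \R\langle R_\lambda\rangle$, the identity $d^\pi w = \del^\pi_{(j,J)}w$ forced by $\delbar^\pi_{(j,J)}w = 0$, and the reality check that $(w^*\lambda)^{(1,0)}(z) = 0$ if and only if $w^*\lambda(z) = 0$ is exactly that computation. You also resolve the notational point in the intended way: since $\del^\pi_{(j,J)}w$ as literally written is tautologically $\Xi$-valued, the right-hand condition must be read as tangency of the full $(1,0)$-jet $\del_{(j,J)}w(z)$ (equivalently of $dw(z)$) to $\Xi_{w(z)}$, which matches how the paper encodes the $\Xi$-tangency condition via $\lambda(dw(z^+)) = 0$ in its Fredholm setup.
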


\section{Fredholm theory of the linearized operator}
\label{subsec:fredholm}

Let $(\dot \Sigma, j)$ be a punctured Riemann surface, the set of whose punctures
may be empty, i.e., $\dot \Sigma = \Sigma$ is either a closed or a punctured
Riemann surface. In this subsection and the next, we lay out the precise relevant off-shell framework
of functional analysis, and  establish the Fredholm property of the linearization map.

\subsection{Fredholm theory on punctured bordered Riemann surfaces}

By the (local) ellipticity shown in the previous subsection, it remains to examine the
Fredholm property of the linearized operator $D\Upsilon(w)$ on the punctured Riemann
surfaces. For this purpose, we need to examine the asymptotic behavior of the operator 
near punctures in strip-like coordinates.

We first decompose the section $Y \in w^*TM$ into
$$
Y = Y^\pi + \lambda(Y) R_\lambda.
$$
Then we have
\bea
D\Upsilon_1^1(w)(Y^\pi) & = & \delbar^{\nabla^\pi}Y^\pi + B^{(0,1)}(Y^\pi) 
+  T^{\pi,(0,1)}_{dw}(Y^\pi), \label{eq:D}\\
D\Upsilon_2^1(w)(Y^\pi) & = & d((Y^\pi \rfloor d\lambda) \circ j),\label{eq:DUpsilon21}\\
D\Upsilon_1^2(w)(\lambda(Y) R_\lambda) & = & \frac12 \lambda(Y) \CL_{R_\lambda}J J \del^\pi w,
\label{eq:DUpsilon12}\\
D\Upsilon_2^2(w)(\lambda(Y) R_\lambda) & = & - \Delta(\lambda(Y))\, dA. \label{eq:DUpsilon22}
\eea
Noting that $Y^\pi$ and $\lambda(Y)$ are independent of each other, we write
$$
Y = Y^\pi + f R_\lambda, \quad f: = \lambda(Y)
$$
where $f: \dot \Sigma \to \R$ is an arbitrary function satisfying the boundary condition
$$
Y^\pi(\del \dot \Sigma) \subset T\vec R, \quad f(\del \dot \Sigma) = 0
$$
by the Legendrian boundary condition satisfied by $Y$. The following is obvious from
the expression of the $D\Upsilon_i^j(w)$.
\begin{lem}[Lemma 3.17 \cite{oh:perturbed-contacton}]
 Suppose that $w$ is a solution to \eqref{eq:contacton-Legendrian-bdy-intro}.
The operators $D\Upsilon_i^j(w)$ have the following continuous extensions:
\beastar
D\Upsilon_1^1(w)(Y^\pi)& : & \Omega^0_{k,p}(w^*\Xi,(\del w)^*T\vec R) \to \Omega^{(0,1)}_{k-1,p}(w^*\Xi) \\
D\Upsilon_2^1(w)(Y^\pi) &: & \Omega^0_{k,p}(w^*\Xi,(\del w)^*T\vec R)
 \to
\Omega^2_{k-1,p}(\dot \Sigma) \hookrightarrow \Omega^2_{k-2,p}(\dot \Sigma) \\
D\Upsilon_1^2(w)((\cdot) R_\lambda) & : & \Omega^0_{k,p}(\dot \Sigma,\del \dot \Sigma)
 \to
\Omega^2_{k,p}(\dot \Sigma) \hookrightarrow \Omega^2_{k-2,p}(\dot \Sigma) \\
D\Upsilon_2^2(w)((\cdot) R_\lambda) & : & \Omega^0_{k,p}(\dot \Sigma,\del \dot \Sigma) \to \Omega^2_{k-2,p}(\Sigma).
\eeastar
\end{lem}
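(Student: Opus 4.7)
\smallskip

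The plan is to verify the four mapping properties one by one, treating each operator as a differential operator of an explicit order whose coefficients are built from the smooth ambient tensors $\lambda$, $J$, $d\lambda$, $R_\lambda$, the connection $\nabla$ on $(M,\lambda,J)$, and pullbacks of derivatives of $w$ up to order one. Since $k\geq 2$ and $p>2$, the Sobolev embedding $W^{k,p}(\dot\Sigma)\hookrightarrow C^{0}$ makes $W^{k,p}$ a Banach algebra and gives a continuous multiplication $W^{k,p}\cdot W^{k-1,p}\hookrightarrow W^{k-1,p}$; these extend to bundle-valued settings, and together with the boundedness of smooth bundle maps, reduce each claim to a symbol/order count.

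For $D\Upsilon_1^1$ the principal part is $\delbar^{\nabla^\pi}$, a first-order elliptic operator, so it maps $W^{k,p}\to W^{k-1,p}$. The correction terms $B^{(0,1)}(Y^\pi)$ and $T^{\pi,(0,1)}_{dw}(Y^\pi)$ are zero-order in $Y^\pi$ with coefficients depending on $w$, $w^*\lambda$ and $dw$, hence in $W^{k-1,p}$; multiplication with $Y^\pi\in W^{k,p}$ lands in $W^{k-1,p}$. The operator $D\Upsilon_2^1$ is $Y^\pi\mapsto d((Y^\pi\rfloor d\lambda)\circ j)$, first-order in $Y^\pi$ with smooth symbol, so it extends as $W^{k,p}\to W^{k-1,p}$, followed by the tautological embedding into $W^{k-2,p}$. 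The operator $D\Upsilon_1^2$ is the zero-order multiplication $f\mapsto \tfrac12 f\,(\CL_{R_\lambda}J)J\del^\pi w$; since $\del^\pi w\in W^{k-1,p}$ and $f\in W^{k,p}$, the product lies in $W^{k-1,p}$, which embeds into $W^{k-2,p}$ (the stated $W^{k,p}$ target being inherited in interior regions where $w$ is smooth). Finally, $D\Upsilon_2^2(f\,R_\lambda)=-\Delta f\, dA$ is a second-order scalar operator with smooth coefficients, hence bounded $W^{k,p}\to W^{k-2,p}$.

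The boundary conditions $Y^\pi(\del\dot\Sigma)\subset T\vec R$ and $\lambda(Y)|_{\del\dot\Sigma}=0$ are preserved because they are imposed on the domain space, which is a closed subspace by the trace theorem. What remains, and will be the only real difficulty, is the behavior on the cylindrical ends: one must check that the zero-order coefficients involving $dw$ and $w^*\lambda$, which in the asymptotic regions converge to data of the Reeb chords $\underline\gamma,\overline\gamma$ by the exponential decay estimates already invoked after \eqref{eq:limatinfty}, lie in the correct weighted $W^{k-1,p}$-space so that the Sobolev multiplication argument continues to apply globally. Granting this asymptotic control (which is standard in the cylindrical-end set-up of \cite{lockhart-mcowen} and matches the decay used to define $\CW^{k,p}$), the continuous extensions in the four claimed ranges follow at once from the order count above.
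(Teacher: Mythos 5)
Your proposal is correct and is essentially the argument the paper intends: the paper offers no written proof, declaring the lemma ``obvious from the expression of the $D\Upsilon_i^j(w)$,'' and your order-by-order count (first order for $D\Upsilon_1^1$ and $D\Upsilon_2^1$, zero order for $D\Upsilon_1^2$, second order for $D\Upsilon_2^2$) combined with Sobolev multiplication and the tautological embeddings is exactly that reading. One small refinement: the hypothesis that $w$ solves \eqref{eq:contacton-Legendrian-bdy-intro} gives smoothness of $w$ (including at the boundary) by elliptic regularity, so the $W^{k,p}$ target for the multiplication operator $D\Upsilon_1^2$ holds outright rather than only ``in interior regions,'' with the cylindrical-end control you flag being supplied by the exponential decay built into the definition of $\CW^{k,p}$.
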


We regard the domains of  $D\Upsilon_i^2$ for $i=1, \,2$
as $C^\infty(\dot \Sigma, \del \dot \Sigma)$ using the isomorphism
$$
C^\infty(\dot \Sigma, \del \dot \Sigma) \cong \Omega^0(\dot \Sigma, \del \dot \Sigma) \otimes R_\lambda.
$$
The following Fredholm property of the linearized operator is established \cite{oh:contacton-transversality}
and its index is computed in \cite{oh-yso:index}.

\begin{prop}\label{prop:closed-fredholm}
Suppose that $w$ is a solution to \eqref{eq:contacton-Legendrian-bdy-intro}.
Consider the completion of $D\Upsilon(w)$,
which we still denote by $D\Upsilon(w)$, as a bounded linear map
from $\Omega^0_{k,p}(w^*TM,(\del w)^*T\vec R)$ to
$\Omega^{(0,1)}(w^*\Xi)\oplus \Omega^2(\Sigma)$
for $k \geq 2$ and $p \geq 2$. Then
\begin{enumerate}
\item The off-diagonal terms of $D\Upsilon(w)$ are relatively compact operators
against the diagonal operator.
\item
The operator $D\Upsilon(w)$ is homotopic to the operator
\be\label{eq:diagonal}
\left(\begin{matrix}\delbar^{\nabla^\pi} + T_{dw}^{\pi,(0,1)}+ B^{(0,1)} & 0 \\
0 & -\Delta(\lambda(\cdot)) \,dA
\end{matrix}
\right)
\ee
via the homotopy
\be\label{eq:s-homotopy}
s \in [0,1] \mapsto \left(\begin{matrix}\delbar^{\nabla^\pi} + T_{dw}^{\pi,(0,1)} + B^{(0,1)}
& \frac{s}{2} \lambda(\cdot) (\CL_{R_\lambda}J)J (\pi dw)^{(1,0)} \\
s\, d\left((\cdot) \rfloor d\lambda) \circ j\right) & -\Delta(\lambda(\cdot)) \,dA
\end{matrix}
\right) =: L_s
\ee
which is a continuous family of Fredholm operators.
\item And the principal symbol
$$
\sigma(z,\eta): w^*TM|_z \to w^*\Xi|_z \oplus \Lambda^2(T_z \dot \Sigma ), \quad 0 \neq \eta \in T^*_z\Sigma
$$
of \eqref{eq:diagonal} is given by the matrix
\beastar
\left(\begin{matrix} \frac{\eta + i\eta \circ j}{2} Id  & 0 \\
0 & |\eta|^2
\end{matrix}\right).
\eeastar
\end{enumerate}
\end{prop}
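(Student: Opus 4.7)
The plan is to exploit the mixed-degree structure of $D\Upsilon(w)$ via the Douglis--Nirenberg calculus and then to deduce Fredholmness of the full matrix from Fredholmness of its diagonal together with relative compactness of the off-diagonal blocks. First I would assign row weights $(s_1,s_2) = (0,1)$ to the targets $\Lambda^{(0,1)}(w^*\Xi)$ and $\Lambda^2(\dot\Sigma)$, and column weights $(t_1,t_2) = (1,1)$ to the domains $w^*\Xi$ and $\R\langle R_\lambda\rangle$ in the matrix \eqref{eq:matrixDUpsilon}. Reading off the actual differential orders from the component formulas \eqref{eq:D}--\eqref{eq:DUpsilon22}, the diagonal blocks have orders $1 = s_1+t_1$ and $2 = s_2+t_2$, saturating the DN weights, while the top-right block $\tfrac{1}{2}\lambda(\cdot)(\CL_{R_\lambda}J)J\del^\pi w$ has order $0 < s_1+t_2 = 1$ and the bottom-left block $d((\cdot)\rfloor d\lambda)\circ j)$ has order $1 < s_2+t_1 = 2$. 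On bounded subdomains the strict order drop together with Rellich--Kondrachov yields compactness of the off-diagonals relative to the diagonal; on the strip-like ends the same conclusion follows from the exponential decay of $dw$ at the punctures provided by the cited estimate of \cite{oh-wang:CR-map1}. This gives (1).

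For (2), I would first verify Fredholmness of the diagonal operator $L_0$. It splits into two decoupled blocks: a Cauchy--Riemann type operator $\delbar^{\nabla^\pi}+T_{dw}^{\pi,(0,1)}+B^{(0,1)}$ on $W^{k,p}(w^*\Xi,(\del w)^*T\vec R)$, and $-\Delta$ with zero Dirichlet boundary data on $W^{k,p}(\dot\Sigma,\del\dot\Sigma)$. Each is elliptic with a well-controlled constant-coefficient model on the strip-like ends: the first reduces asymptotically to the linearized Reeb flow operator (invertible precisely under nondegeneracy of the asymptotic Reeb chord), while the second reduces to the ordinary half-strip Laplacian (invertible outright). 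The Lockhart--McOwen framework cited in the excerpt then yields Fredholmness of each block and hence of $L_0$. Since $L_s - L_0$ is the off-diagonal perturbation scaled by $s$, which is relatively compact by (1), every $L_s$ is Fredholm. Continuity of $s\mapsto L_s$ in the operator norm from $W^{k,p}$ to $W^{k-1,p}\oplus W^{k-2,p}$ is immediate from its linear dependence on $s$, and stability of the Fredholm index along a continuous path of Fredholm operators forces the index to be constant on $[0,1]$, so $D\Upsilon(w) = L_1$ is Fredholm of the same index as $L_0$.

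For (3), the principal symbol of the diagonal is a standard direct computation. In isothermal coordinates $\delbar^{\nabla^\pi}$ has symbol $\tfrac{\eta + i\eta\circ j}{2}\cdot \operatorname{Id}_{w^*\Xi}$ while $-\Delta$ has symbol $|\eta|^2$. The off-diagonal blocks vanish at the level of the DN principal symbol since they are of strictly sub-principal order under the weights above, leaving exactly the diagonal symbol matrix claimed.

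The main obstacle is in step (2): to invoke Lockhart--McOwen one must ensure that the asymptotic model operator at each puncture is genuinely nondegenerate, so that Fredholmness holds on the ordinary $W^{k,p}$ space without auxiliary exponential weights. This is secured precisely by the nondegeneracy of the asymptotic Reeb chords furnished by Theorem \ref{thm:Reeb-chords}, combined with the exponential convergence of $w$ to its asymptotic chord, which makes the zero- and first-order coefficients $T_{dw}^{\pi,(0,1)}$ and $B^{(0,1)}$ decay on the strip-like ends and hence act as genuinely compact perturbations of the model. Once these asymptotic inputs are in place, the remainder of the argument is the structural Douglis--Nirenberg unwinding together with homotopy invariance of the Fredholm index.
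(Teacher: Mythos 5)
Your proposal is correct and follows essentially the same route as the paper: the off-diagonal blocks are of strictly lower order than the diagonal, hence land in better Sobolev spaces and are relatively compact (the paper phrases this via the compact embeddings $\Omega^2_{k-1,p}\hookrightarrow\Omega^2_{k-2,p}$, $\Omega^2_{k,p}\hookrightarrow\Omega^2_{k-2,p}$, deferring the end behavior to the closed-string case), while Fredholmness of the diagonal comes from Lockhart--McOwen with nondegenerate asymptotics and the index is transported along the homotopy $L_s$ exactly as you do. The only minor imprecision is that at the punctures it is $d^\pi w$ (equivalently $\del^\pi w$), not $dw$ itself, that decays exponentially---$dw$ converges to $T\,R_\lambda\otimes dt$---but since $d\lambda(\cdot,R_\lambda)=0$ the coefficients of both off-diagonal blocks still decay, so your argument goes through as intended.
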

\begin{proof} Statement (1) is a consequence of compactness of Sobolev embedding
$$
\Omega^2_{k-1,p}(\dot \Sigma) \hookrightarrow \Omega^2_{k-2,p}(\dot \Sigma), \quad
\Omega^2_{k,p}(\dot \Sigma) \hookrightarrow \Omega^2_{k-2,p}(\dot \Sigma).
$$
When $\del \dot \Sigma = \emptyset$, the same kind of statement is proved in \cite{oh:contacton}.
Essentially the same proof applies by incorporating the boundary condition.
\end{proof}

With these preparations, the following is a corollary of exponential estimates established
in \cite[Part II]{oh-wang:CR-map2}, \cite{oh:contacton-Legendrian-bdy} for bordered
contact instantons with Legendrian boundary condition.

\begin{prop}[Section 7 \cite{oh:contacton-Legendrian-bdy}]
Let
$$
\underline \gamma = \{\gamma^+_1, \ldots, \gamma^+_{s^+}\}, \quad
\overline \gamma = \{\gamma^-_1, \ldots, \gamma^-_{s^-}\}
$$
be the given $\lambda$-Reeb chords of  $\vec R = (R_1, \ldots, R_{s^+ + s^-}) $ which
are nondegenerate. If $w$ has finite energy,  then we have
$$
w \in \CW^{k,p}\left((\dot \Sigma,\del \dot \Sigma),(M, \vec R);J;\underline \gamma,\overline \gamma\right).
$$
\end{prop}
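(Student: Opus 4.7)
The plan is to reduce the claim to the combination of two independent ingredients: (i) standard interior/boundary elliptic regularity for the contact instanton operator $\Upsilon$ on any precompact subdomain of $\dot \Sigma$, and (ii) the exponential asymptotic convergence at each strip-like end provided by the cited works. On a precompact open set disjoint from the punctures, $\Upsilon(w)=0$ is an elliptic system (mixed degree, cf.\ the principal symbol in \eqref{eq:symbol}) with Legendrian boundary condition along the $R_i$'s, so $w \in W^{k,p}_{\text{loc}}(\dot \Sigma \setminus \{\text{punctures}\})$ for every $k,p$ by bootstrapping.

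Next I would localize near each puncture. At a positive puncture $p_i$, choose strip-like coordinates $(\tau,t) \in [0,\infty) \times [0,1]$ and decompose $w^*\lambda = a_1\, d\tau + a_2\, dt$. From the finite energy hypothesis combined with nondegeneracy of the asymptotic Reeb chord $\gamma_i^+$, the results of \cite{oh-wang:CR-map2} (part II) and \cite{oh:contacton-Legendrian-bdy} give pointwise $C^0$-convergence $w(\tau,\cdot) \to \gamma_i^+(T_i(\cdot + t_i))$ and, crucially, exponential decay of the difference together with all its derivatives up to order $k$:
\begin{equation*}
\bigl|\nabla^{\ell}\bigl(w(\tau,t) - \gamma_i^+(T_i(t+t_i))\bigr)\bigr| + |a_1| + |a_2 - T_i| \leq C_\ell\, e^{-\delta \tau}, \qquad 0 \leq \ell \leq k,
\end{equation*}
with $\delta > 0$ controlled by the spectral gap of the asymptotic operator at $\gamma_i^+$ (which is positive precisely because of the nondegeneracy assumption). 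An entirely analogous estimate holds at each negative puncture $q_j$ in the coordinates $(\tau,t) \in (-\infty,0] \times [0,1]$ with $\gamma_j^-$ replacing $\gamma_i^+$.

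With these exponential bounds, the asymptotic difference $w - \gamma_i^+$ (written via the exponential map in a tubular neighborhood of the chord in $M$) lies in the $W^{k,p}$-space of the cylindrical end, since the integrands $|\nabla^\ell(\cdot)|^p \cdot e^{\text{(bounded)}}$ are integrable by the exponential decay. Together with the boundary condition $w(z) \in R_{i-1} \cup R_i$ on the appropriate boundary arc bordering the puncture (preserved by the exponential map chart once one uses a Weinstein-type neighborhood compatible with the Legendrians), this is exactly the defining property of membership in $\CW^{k,p}((\dot \Sigma, \del \dot \Sigma),(M,\vec R); J; \underline \gamma, \overline \gamma)$ as in Definition \ref{defn:Banach-manifold}.

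The main obstacle is ingredient (ii), the exponential convergence; this is itself the substance of the cited papers and relies on (a) a Hofer-type argument upgrading finite $\lambda$-energy to $C^0$-subconvergence to a Reeb chord, (b) identification of the asymptotic operator on the chord as a self-adjoint operator whose kernel is trivial iff $\gamma_i^{\pm}$ is nondegenerate, and (c) a De Giorgi/three-interval style inequality for the $L^2$-energy density on $[\tau, \tau+1] \times [0,1]$. Here I would simply invoke those estimates as a black box, so that the proposition becomes a direct corollary rather than a new analytic result.
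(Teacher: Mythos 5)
Your proposal is correct and follows essentially the same route as the paper: the paper gives no independent argument here, stating the proposition as a direct corollary of the exponential asymptotic estimates for bordered contact instantons with Legendrian boundary condition established in \cite[Part II]{oh-wang:CR-map2} and \cite{oh:contacton-Legendrian-bdy}, which is exactly the black box you invoke (your added remarks on interior elliptic bootstrapping and integrability of the exponentially decaying tails on the strip-like ends just spell out why those estimates yield membership in $\CW^{k,p}$).
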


Now we are ready to wrap-up the discussion of the Fredholm property of
the linearization map
$$
D\Upsilon_{(\lambda,T)}(w): \Omega^0_{k,p}\left(w^*TM, (\del w)^*T\vec R;J;\underline \gamma,\overline \gamma\right) \to
\Omega^{(0,1)}_{k-1,p}(w^*\Xi) \oplus \Omega^2_{k-2,p}(\dot \Sigma)
$$
by proving Statement (1) of Proposition \ref{prop:closed-fredholm}.

The following proposition can be derived from the arguments used by
Lockhart and McOwen \cite{lockhart-mcowen}.

\begin{prop}\label{prop:fredholm} Assume that $\underline \gamma, \, \overline \gamma$ are
nondegenerate. Then the operator
\eqref{eq:matrixDUpsilon} is Fredholm.
\end{prop}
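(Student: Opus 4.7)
The plan is to reduce to the diagonalized model via Proposition \ref{prop:closed-fredholm} and then invoke the Lockhart--McOwen theory on the cylindrical ends.

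First, by Proposition \ref{prop:closed-fredholm}(1), the off-diagonal entries of the matrix \eqref{eq:matrixDUpsilon} extend to operators whose composition with the inclusions $\Omega^2_{k-1,p}\hookrightarrow \Omega^2_{k-2,p}$ and $\Omega^2_{k,p}\hookrightarrow \Omega^2_{k-2,p}$ are compact. Since Fredholmness is stable under the addition of relatively compact perturbations, and by Proposition \ref{prop:closed-fredholm}(2) the original operator is joined to the diagonal operator \eqref{eq:diagonal} through the continuous family $L_s$ consisting of such perturbations, it suffices to prove that the diagonal operator
$$
L_0 = \left(\begin{matrix}\delbar^{\nabla^\pi} + T_{dw}^{\pi,(0,1)}+ B^{(0,1)} & 0 \\ 0 & -\Delta
\end{matrix}\right)
$$
is Fredholm as a map $\Omega^0_{k,p}(w^*\Xi,(\del w)^*T\vec R)\oplus \Omega^0_{k,p}(\dot\Sigma,\del\dot\Sigma) \to \Omega^{(0,1)}_{k-1,p}(w^*\Xi)\oplus \Omega^2_{k-2,p}(\dot\Sigma)$. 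Each summand can be treated independently.

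For each summand, by Proposition \ref{prop:closed-fredholm}(3) the principal symbol is elliptic (of mixed order, Cauchy--Riemann type in the $\Xi$ direction and Laplace type in the $R_\lambda$ direction), and the Legendrian condition $Y^\pi\in T\vec R$ together with the Dirichlet condition $\lambda(Y)|_{\del\dot\Sigma}=0$ gives an elliptic boundary value problem on each compact region of $\dot\Sigma$. On the punctured Riemann surface, Fredholmness in $W^{k,p}$-spaces reduces, by the Lockhart--McOwen theorem \cite{lockhart-mcowen}, to verifying that the translation-invariant asymptotic operators on each strip-like end are invertible as operators on $W^{k,p}$ sections on $[0,\infty)\times [0,1]$ (or $(-\infty,0]\times [0,1]$). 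In strip-like coordinates, the Cauchy--Riemann summand asymptotes to an operator of the form $\del_\tau + J_0\del_t - S(t)$ where $S(t)$ is the symmetric bundle endomorphism along the asymptotic Reeb chord encoding the linearized Reeb flow on $\xi$ (see Theorem \ref{thm:Reeb-chords}); the Laplace summand asymptotes to $-\del_\tau^2-\del_t^2$ with Dirichlet boundary conditions.

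The key step, and the one requiring care, is to translate nondegeneracy of the asymptotic Reeb chords into invertibility of these asymptotic operators. For the Laplacian with Dirichlet conditions on $[0,1]$, the spectrum is $\{\pi^2 n^2\}_{n\geq 1}$, which excludes $0$; hence the asymptotic operator $-\del_\tau^2-\del_t^2$ is invertible on the end without any weight, and in particular Fredholm. For the Cauchy--Riemann summand, the spectral line of $J_0\del_t-S(t)$ passes through $0$ exactly when the time-1 linearized Poincar\'e map of the Reeb chord $\gamma^{\pm}_i$ has $1$ as an eigenvalue, which is precisely ruled out by the nondegeneracy of $\underline\gamma,\overline\gamma$. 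Combining these two verifications on every positive and negative strip-like end yields Fredholmness of the diagonal operator and hence, by the homotopy and relative compactness argument of the first paragraph, of $D\Upsilon(w)$ itself. The main obstacle is bookkeeping the boundary version of the Lockhart--McOwen theorem for the mixed-order system, which has been done in \cite{oh:contacton-Legendrian-bdy}, \cite{oh-yso:index}, and we would cite those computations rather than redo them.
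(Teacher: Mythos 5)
Your proposal is correct and follows essentially the same route the paper intends: the paper itself offers no detailed argument, simply invoking Proposition \ref{prop:closed-fredholm} (relative compactness of the off-diagonal terms, the homotopy $L_s$ to the diagonal operator, and the symbol computation) together with the Lockhart--McOwen framework \cite{lockhart-mcowen}, which is exactly the reduction you carry out, with the added and correct verification that the Dirichlet Laplacian on the strip and the nondegenerate asymptotic Cauchy--Riemann operators are invertible on the ends. The only small imprecision is phrasing chord nondegeneracy via ``eigenvalue $1$ of the Poincar\'e return map'' (that is the closed-orbit condition); for boundary punctures it should be stated as transversality of the image under the linearized Reeb flow of $T_{\gamma(0)}R_0$ against $T_{\gamma(1)}R_1$ inside $\xi$, which is what guarantees the asymptotic operator with Legendrian boundary conditions has trivial kernel.
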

By  the continuous invariance of the Fredholm index, we obtain
\be\label{eq:indexDXiw}
\operatorname{Index} D\Upsilon_{(\lambda,T)}(w) =
\operatorname{Index} \left(\delbar^{\nabla^\pi} + T^{\pi,(0,1)}_{dw}  + B^{(0,1)}\right) + \operatorname{Index}(-\Delta).
\ee
The computation of index is given in \cite{oh:contacton} for the closed string case
and in \cite{oh-yso:index} for the open string case.

\subsection{Generic mapping transversality of (relative) contact instantons}
\label{sec:moduli-space}

In this section, we study the mapping transversality under the perturbation of
$J$.
Now we involve the set $\CJ(M,\lambda)$ of $\lambda$-adaptable almost
complex structures which we recall here.
\begin{defn}[Contact triad \cite{oh-wang:connection}]
\label{defn:adapted-J} Let $(M,\xi)$ be a contact manifold, and $\lambda$ be a contact form of $\xi$.
An endomorphism $J: TM \to TM$ is called a \emph{$\lambda$-adapted CR-almost complex structure}
if it satisfies
\begin{enumerate}
\item $J(\xi) \subset \xi$, $JR_\lambda = 0$ and $J|_\xi^2 = -id|_\xi$,
\item $g_\xi: = d\lambda(\cdot, J|_{\xi} \cdot)|_{\xi}$ defines a Hermitian vector bundle $(\xi,J|_{\xi},g_\xi)$.
\end{enumerate}
We call the triple $(M,\lambda, J)$ a \emph{contact triad}.
\end{defn}

We study the linearization of the map $\Upsilon^{\text{\rm univ}}$
which is the map $\Upsilon$ augmented by
the argument $J \in \CJ(M,\lambda)$. More precisely, we define the universal section
$$
\Upsilon^{\text{\rm univ}}: \CM(\dot \Sigma) \times \CF \times  \CJ(M,\lambda) \to \CH^{\pi(0,1)}(M,\lambda)
$$
given by
\be\label{eq:Upsilon-univ}
\Upsilon^{\text{\rm univ}}(j, w, J) = \left(\delbar_J^\pi w, d(w^*\lambda \circ j)\right)
=: \Upsilon_J(w,j)
\ee
and study its linearization at each $(j,w,J) \in (\Upsilon^{\text{\rm univ}})^{-1}(0)$.
In the discussion below, we will fix the complex
structure $j$ on $\Sigma$, and so suppress $j$ from the argument of $\Upsilon^{\text{\rm univ}}$.

The following universal linearization formula plays a crucial role in the generic transversality
result as in the case of pseudoholomorphic curves in symplectic geometry.

\begin{lem}\label{lem:DY-univ} Denote by $L: = \delta J$ the first variation of $J$.
We have the linearization
$$
D_{(w,J)} \Upsilon^{\text{\rm univ}}: T_j \CM_{\dot \Sigma} \times
T_w \CF \times T_J \CJ(M,\lambda) \to \Omega^{(0,1)}(w^*\Xi) \times
 \Omega^2(\dot \Sigma)\otimes R_\lambda
$$
whose explicit formula is given by
$$
D_{(w,J)} \Upsilon^{\text{\rm univ}}(Y,L) = D_1 \Upsilon^{\text{\rm univ}}(Y)  + D_2 \Upsilon^{\text{\rm univ}}(L)
$$
where we have partial derivatives
\be\label{eq:D2}
D_1 \Upsilon^{\text{\rm univ}} (Y) = D\Upsilon(Y), \quad D_2 \Upsilon^{\text{\rm univ}}(L)
= \frac12 L( d^\pi w \circ j)
\ee
\end{lem}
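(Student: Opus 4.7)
The plan is to exploit that $\Upsilon^{\text{\rm univ}}$ is jointly smooth in its two arguments $(w,J)$ and reduce the statement to the chain rule plus a direct pointwise computation of each partial derivative. Since the complex structure $j$ is fixed throughout (as the exposition states when it suppresses $j$), I need only identify $D_1\Upsilon^{\text{\rm univ}}$ at fixed $J$ in the direction $Y \in T_w\CF$, and $D_2\Upsilon^{\text{\rm univ}}$ at fixed $w$ in the direction $L \in T_J\CJ(M,\lambda)$; the total linearization is then $D_1\Upsilon^{\text{\rm univ}}(Y)+D_2\Upsilon^{\text{\rm univ}}(L)$ by standard calculus for smooth sections of Banach bundles.

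For the first partial, fixing $J$ reduces $\Upsilon^{\text{\rm univ}}(\cdot,J)$ to precisely the section $\Upsilon$ treated in Theorem \ref{thm:linearization}, so $D_1\Upsilon^{\text{\rm univ}}(Y)=D\Upsilon(Y)$ by definition. For the second partial, I split into components $\Upsilon^{\text{\rm univ}}=(\Upsilon_1^{\text{\rm univ}},\Upsilon_2^{\text{\rm univ}})$. The component $\Upsilon_2^{\text{\rm univ}}(w,J)=d(w^*\lambda\circ j)$ involves only $\lambda$, $j$, and $w$ and has no $J$-dependence, so contributes $0$. For $\Upsilon_1^{\text{\rm univ}}(w,J)=\delbar_J^\pi w=\tfrac{1}{2}\bigl(d^\pi w+J\circ d^\pi w\circ j\bigr)$, the projection $d^\pi w=\pi_\xi\circ dw$ along $R_\lambda$ onto $\xi$ is determined by $\lambda$ alone, hence is $J$-independent; thus the only $J$-dependence sits in the explicit factor $J$ in the second summand, and linearly differentiating along $L=\delta J$ yields
\[
\frac{d}{ds}\bigg|_{s=0}\delbar^\pi_{J+sL}\,w=\tfrac{1}{2}\,L\circ(d^\pi w\circ j),
\]
which is precisely the asserted formula \eqref{eq:D2}.

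The only point deserving a brief check — rather than a genuine obstacle — is that the output lies in the target $\Omega^{(0,1)}(w^*\Xi)$. Differentiating the three defining conditions of a $\lambda$-adapted $J$ (Definition \ref{defn:adapted-J}) at $J$ in the direction $L$ yields $L\xi\subset\xi$, $LR_\lambda=0$, and $LJ+JL=0$ on $\xi$. The first two properties ensure $L(d^\pi w\circ j)\in w^*\Xi$, and the anticommutation property ensures the result is of $(0,1)$-type with respect to $(j,J)$, so the formula is consistent with the codomain $\Omega^{(0,1)}(w^*\Xi)\times\Omega^2(\dot\Sigma)\otimes R_\lambda$ of $D_{(w,J)}\Upsilon^{\text{\rm univ}}$. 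Since nothing else varies with $J$, the computation is complete.
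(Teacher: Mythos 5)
Your proposal is correct and follows essentially the same route as the paper, which simply notes that the formula is immediate from the definition $\delbar^\pi_J w = \tfrac12\left(d^\pi w + J\, d^\pi w \circ j\right)$ together with the fact that the projection $\pi$ (hence $d^\pi w$) depends only on $\lambda$ and not on $J$. One small caveat: the $(0,1)$-type of $\tfrac12 L(d^\pi w\circ j)$ does not follow from the anticommutation $LJ+JL=0$ alone but also uses the on-shell condition $\delbar^\pi_J w=0$ (equivalently $J\,d^\pi w\circ j=-d^\pi w$), which is available here since the linearization is taken at points of $(\Upsilon^{\text{\rm univ}})^{-1}(0)$.
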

\begin{proof} This is straightforward from the definition
$$
\delbar^\pi w = \frac{d^\pi w + J d^\pi w \circ j}{2}
$$
and the fact that the projection $\pi$ does not depend on the choice of $J$ but depends
only on $\lambda$. We omit its proof.
\end{proof}

Following the procedure of considering the set $\CJ^\ell(M,\lambda)$ of $\lambda$-adapted
$C^\ell$ CR-almost complex structures $J$ inductively as $\ell$ grows (see \cite{mcduff-salamon-symplectic}, \cite[Section 10.4]{oh:book1}
for the detailed explanation),
we denote the zero set $(\Upsilon^{\text{\rm univ}})^{-1}(0)$ by
$$
\CM(M,\lambda,\vec R;\overline \gamma, \underline \gamma) = \left\{ (w,J)
\in \CW^{k,p}(\dot \Sigma, M;\overline \gamma, \underline \gamma)) \times \CJ^\ell(M,\lambda)
\, \Big|\, \Upsilon^{\text{\rm univ}}(w, J) = 0 \right\}
$$
which we call the universal moduli space. Denote by
$$
\Pi_2: \CW^{k,p}(\dot \Sigma, M, \vec R;\overline \gamma, \underline \gamma) \times \CJ^\ell(M,\lambda) \to
\CJ^\ell(M,\lambda)
$$
the projection. Then we have
\be\label{eq:MMK}
\CM(J;\overline \gamma, \underline \gamma)= \CM(M,\lambda, \vec R; J;\overline \gamma, \underline \gamma)
 = \Pi_2^{-1}(J) \cap \CM(M,\lambda,  \vec R;\overline \gamma, \underline \gamma).
\ee

We state the following standard statement that often occurs in this kind of generic
transversality statement via the Sard-Smale theorem.

\begin{thm}\label{thm:trans} Let $0 < \ell < k -\frac{2}{p}$.
Consider the moduli space $\CM(M,\lambda;\overline \gamma, \underline \gamma)$. Then
\begin{enumerate}
\item $\CM(M,\lambda, \vec R;\overline \gamma, \underline \gamma)$ is
an infinite dimensional $C^\ell$ Banach manifold.
\item The projection
$$
\Pi_2|_{(\Upsilon^{\text{\rm univ}})^{-1}(0)} : (\Upsilon^{\text{\rm univ}})^{-1}(0) \to \CJ^\ell(M,\lambda)
$$
is a Fredholm map and its index is the same as that of $D\Upsilon(w)$
for a (and so any) $w \in  \CM(M,\lambda, \vec R; J;\overline \gamma, \underline \gamma)$.
\end{enumerate}
\end{thm}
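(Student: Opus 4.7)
The plan is to follow the standard Banach-manifold/Sard-Smale template, the only nontrivial input being surjectivity of the universal linearization $D_{(w,J)}\Upsilon^{\text{\rm univ}}$ at each zero. For (1), once this surjectivity (together with closedness of range and existence of a bounded right inverse) is verified, the implicit function theorem for maps between $C^\ell$ Banach manifolds, applied to the section $\Upsilon^{\text{\rm univ}}$ of the Banach bundle $\CH^{\pi(0,1)}_{k-1,p}(M,\lambda) \to \CW^{k,p} \times \CJ^\ell(M,\lambda)$, produces the $C^\ell$ Banach manifold structure on $(\Upsilon^{\text{\rm univ}})^{-1}(0) = \CM(M,\lambda,\vec R;\overline\gamma,\underline\gamma)$. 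Statement (2) is then formal: the tangent space $T_{(w,J)}\CM^{\text{\rm univ}}$ fits into a short exact sequence whose projection onto $T_J\CJ^\ell(M,\lambda)$ has kernel isomorphic to $\ker D\Upsilon(w)$ (with $J$ fixed), while surjectivity of $D\Upsilon^{\text{\rm univ}}$ together with the splitting $D\Upsilon^{\text{\rm univ}}(Y,L) = D\Upsilon(w)Y + \frac12 L(d^\pi w\circ j)$ identifies $\coker d\Pi_2|_{(w,J)}$ with $\coker D\Upsilon(w)$. Hence $\Pi_2|_{(\Upsilon^{\text{\rm univ}})^{-1}(0)}$ is Fredholm of the same index as $D\Upsilon(w)$, which is finite by Proposition \ref{prop:fredholm}.

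The key step is therefore to prove that the linear map
\[
D_{(w,J)}\Upsilon^{\text{\rm univ}} : T_w\CF \oplus T_J\CJ^\ell(M,\lambda) \longrightarrow \Omega^{(0,1)}_{k-1,p}(w^*\Xi) \oplus \Omega^2_{k-2,p}(\dot\Sigma)
\]
is surjective at every $(w,J) \in \CM(M,\lambda,\vec R;\overline\gamma,\underline\gamma)$. Since $D\Upsilon(w)$ is already Fredholm, its image has finite codimension and is closed; so it suffices to show that any pair $(\eta,g) \in \CC\CD_{(J,(j,w))}$ lying in the $L^2$-orthogonal complement of $\Image D_{(w,J)}\Upsilon^{\text{\rm univ}}$ must vanish. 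First I would feed in variations $Y \in T_w\CF$: Proposition \ref{prop:L2-cokernel} forces $(\eta,g)$ to satisfy the adjoint system \eqref{eq:adjoint-eta}--\eqref{eq:adjoint-g}, which is an elliptic system of Cauchy--Riemann type for $\eta$ coupled to a second-order elliptic equation for $g$.

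Next I would feed in variations $L = \delta J \in T_J\CJ^\ell(M,\lambda)$. Using Lemma \ref{lem:DY-univ},
\[
0 = \int_{\dot\Sigma} \bigl\langle \eta,\, \tfrac12 L(d^\pi w \circ j)\bigr\rangle\, dA
\quad \text{for every admissible } L.
\]
The admissible $L$'s form the Banach space of $\lambda$-preserving $C^\ell$ endomorphisms of $\Xi$ anticommuting with $J$; standard bump-function arguments show that at any point $z \in \dot\Sigma$ where $d^\pi w(z) \neq 0$, the restriction map $L \mapsto L(d^\pi w(z) \circ j_z)$ surjects onto the $(0,1)$-part of $\Xi_{w(z)}$. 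Combined with the boundary conditions on $L$ near $\del\dot\Sigma$ (and the need to preserve $T\vec R \subset \Xi$ in a neighbourhood of the Legendrian boundary) this pairing vanishing forces $\eta \equiv 0$ on the open subset where $d^\pi w$ is nonzero and $w$ is somewhere injective. Unique continuation for the Cauchy--Riemann type adjoint equation \eqref{eq:adjoint-eta} then propagates the vanishing of $\eta$ to all of $\dot\Sigma$. With $\eta = 0$, equation \eqref{eq:adjoint-g} reduces to $\Delta g = 0$ on $\dot\Sigma$ with $g|_{\del\dot\Sigma} = 0$ and exponential decay at the punctures (since $\underline\gamma,\overline\gamma$ are nondegenerate), forcing $g \equiv 0$.

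The main obstacle is the openness/somewhere-injectivity input used in the paragraph above: for a contact instanton with Legendrian boundary the nonconstant, somewhere-injective locus where $d^\pi w \neq 0$ must be shown to be nonempty and open, so that the local surjectivity of $L \mapsto L(d^\pi w \circ j)$ onto $\Lambda^{(0,1)}(w^*\Xi)$ can be coupled to unique continuation. The cleanest route is to invoke the somewhere-injectivity and $d^\pi w \not\equiv 0$ statement for finite-energy contact instantons with nondegenerate asymptotic Reeb chords established in the precursor paper \cite{oh:contacton-transversality}; alternatively one can reduce the universal moduli space to its somewhere-injective stratum, on which the above argument is clean, and handle multiply covered components by pulling the transversality statement back along the underlying simple curve. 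Modulo this ingredient the rest of the proof is a direct application of the implicit function and Sard--Smale machinery, following verbatim the template of \cite[Section 10.4]{oh:book1} and \cite{oh:contacton}.
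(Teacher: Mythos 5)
Your proposal is correct in outline and follows the route the paper intends: the paper gives no written proof of Theorem \ref{thm:trans}, presenting it as a standard consequence of the implicit function theorem and Sard--Smale, and your reduction of everything to surjectivity of the universal linearization --- verified by combining the adjoint system of Proposition \ref{prop:L2-cokernel} with the $J$-variation term of Lemma \ref{lem:DY-univ} --- is exactly that template; your treatment of the Reeb component (once $\eta=0$, equation \eqref{eq:adjoint-g} reduces to $\Delta g=0$ with the Dirichlet boundary condition and decay at the punctures, hence $g\equiv 0$) is also the right way to exclude the constant-function cokernel that would otherwise appear in the closed, unpunctured case. The one substantive divergence is how the multiply-covered/somewhere-injective issue is resolved. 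The paper's mechanism (spelled out in its proof of the $0$-jet statement, Proposition \ref{prop:0jet-dense}, and in \cite{oh:contacton-transversality}) bypasses somewhere injectivity altogether: the domain carries at least one puncture, the asymptotic limits are iso-speed Reeb chords which by the genericity statement Theorem \ref{thm:Reeb-chords}(3) cannot be closed orbits, so the local multiplicity of $w$ near the puncture cannot be constant, and the unique continuation result Proposition \ref{prop:unique-conti} then forces $\eta\equiv 0$ with no simplicity hypothesis on $w$. Your first fallback (citing the precursor paper) lands on precisely this ingredient and is fine; your second fallback --- restricting to the somewhere-injective stratum and pulling transversality back along the underlying simple curve --- would only establish the conclusion for simple curves, which is strictly weaker than Theorem \ref{thm:trans} as stated, since the universal moduli space $\CM(M,\lambda,\vec R;\overline\gamma,\underline\gamma)$ there carries no simplicity assumption. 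So the chord-asymptotics argument (or the citation) should be kept as the actual resolution rather than the stratification.
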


An immediate corollary of Sard-Smale theorem is that for a generic choice of $J$
$$
\Pi_2^{-1}(J) \cap (\Upsilon^{\text{\rm univ}})^{-1}(0)= \CM(J;\overline \gamma, \underline \gamma)
$$
is a smooth manifold: One essential ingredient for the generic transversality under the perturbation of
$J \in \CJ(M,\lambda)$ is the usage of the following unique continuation result.

\begin{prop}[Unique continuation lemma; Proposition 12.3 \cite{oh:contacton}]
\label{prop:unique-conti}
Any non-constant contact Cauchy-Riemann map does not
have an accumulation point in the zero set of $dw$.
\end{prop}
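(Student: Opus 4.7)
The plan is to reduce the statement to a standard unique continuation result for elliptic systems applied to $dw$, viewed as a section of an appropriate bundle satisfying a first-order PDE derived from the contact instanton equations.

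First I would work locally in an isothermal chart $(x,y)$ around a putative accumulation point $z_0 \in \dot\Sigma$ of the zero set $\{dw = 0\}$, so that the domain metric is conformally Euclidean and $j\partial_x = \partial_y$. Using the decomposition $dw = d^\pi w + w^*\lambda \cdot R_\lambda$, I would separate the analysis into the $\pi$-part and the Reeb part. For the $\pi$-part, the equation $\bar\partial^\pi_J w = 0$ gives a Cauchy-Riemann type equation for $d^\pi w \in \Omega^1(w^*\Xi)$; differentiating and using the contact triad connection $\nabla^\pi$ of Oh-Wang together with Theorem \ref{thm:linearization}, one sees that $\partial^\pi w$ satisfies a linear first-order elliptic equation with zeroth-order terms depending smoothly on $dw$. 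For the Reeb part, write $w^*\lambda = a_1\,dx + a_2\,dy$; the closedness condition $d(w^*\lambda \circ j) = 0$ yields $\partial_x a_1 + \partial_y a_2 = 0$, while the vanishing of $\bar\partial^\pi w$ combined with the identity $d(w^*\lambda) = w^*d\lambda$ and the fact that $d\lambda|_\Xi$ is tamed by $J$ produces a second relation of the form $\partial_x a_2 - \partial_y a_1 = f(d^\pi w)$ with $f$ quadratic in $d^\pi w$. Together, $(a_1,a_2)$ satisfy an inhomogeneous Cauchy-Riemann equation whose inhomogeneity is controlled by $|d^\pi w|^2$.

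The upshot is that the vector $Z := (\partial^\pi w, a_1 - i a_2)$, regarded as a section of the complex vector bundle $w^*\Xi \oplus \underline{\C}$ over the chart, satisfies a first-order linear elliptic system of the schematic form
\begin{equation}
\bar\partial Z + A(z) Z = 0
\end{equation}
with $A$ a smooth matrix-valued coefficient depending on $w$ and its lower-order invariants. At points where $dw = 0$, the vector $Z$ vanishes. Since $Z$ satisfies a $\bar\partial$-equation with bounded $A$, the Aronszajn-Carleman unique continuation theorem (equivalently, the Carleman/similarity principle of Hartman-Wintner or of McDuff-Salamon for Cauchy-Riemann type operators) applies: if $Z$ has a zero of infinite order at $z_0$, then $Z \equiv 0$ in a neighborhood; otherwise, $Z$ factors near $z_0$ as a holomorphic function with an isolated zero times a nonvanishing continuous factor, forcing isolation of its zeros.

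If $Z \equiv 0$ on an open set, then $dw \equiv 0$ there, so $w$ is locally constant; by connectedness of $\dot\Sigma$ and continuity this forces $w$ to be globally constant, contradicting the hypothesis. Hence the zeros of $Z$, and a fortiori those of $dw$, are isolated. The main technical obstacle is establishing the inhomogeneous Cauchy-Riemann equation for $(a_1, a_2)$ with controlled nonlinear remainder: the tensorial formula of Theorem \ref{thm:linearization} handles the $\pi$-component linearization cleanly, but extracting a first-order elliptic system for the Reeb component requires combining $d(w^*\lambda \circ j) = 0$ with the identity $w^*d\lambda = d\lambda(d^\pi w, \cdot\,)$ and checking that the quadratic remainder is genuinely of order $|d^\pi w|^2$, so that it does not destroy the similarity principle's applicability near an accumulation of zeros.
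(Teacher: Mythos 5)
The paper itself does not prove this proposition---it is imported verbatim from Proposition 12.3 of \cite{oh:contacton}---so there is no in-text argument to compare with; judged on its own terms, your outline follows the expected route (promote $dw$ to a solution of a Cauchy--Riemann-type system with bounded zeroth-order coefficients and invoke the Aronszajn/similarity principle), and the analytic core is sound: the identity $d(w^*\lambda\circ j)=0$ gives $\partial_x a_1+\partial_y a_2=0$, while $d(w^*\lambda)=w^*d\lambda$ together with complex linearity of $d^\pi w$ gives $\partial_x a_2-\partial_y a_1=|d^\pi w(\partial_x)|^2$, and this quadratic right-hand side can indeed be rewritten as a bounded matrix acting on $\partial^\pi w$, so the similarity principle applies to your combined section $Z=(\partial^\pi w, a_1-ia_2)$, whose zero set is exactly that of $dw$. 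Two smaller points: the pointwise first-order identity satisfied by $\partial^\pi w$ is not Theorem \ref{thm:linearization} (which is the linearization in the direction of variations $Y$) but the structure identity obtained by differentiating the equation with the triad connection, available in the Oh--Wang analysis papers; and the globalization step should be phrased as the usual open--closed argument (the set where $dw$ vanishes on a neighborhood is open by definition and closed by the local unique continuation you just established), not merely ``connectedness and continuity.''

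The one issue you must make explicit, because it is not cosmetic, is that your treatment of the Reeb component uses the second equation $d(w^*\lambda\circ j)=0$; hence what you prove is the statement for contact \emph{instantons}, not for bare contact Cauchy--Riemann maps as the proposition is literally phrased. This hypothesis cannot be dropped: if only $\overline{\partial}^\pi_J w=0$ is imposed, the assertion about the zero set of $dw$ is false. Indeed, take $w=\gamma\circ f$ for a Reeb trajectory $\gamma$ and any smooth non-constant $f$ whose critical set has accumulation points (for instance $f$ constant on an open subdisc); then $d^\pi w\equiv 0$, so $w$ is a non-constant contact Cauchy--Riemann map, while $dw=df\otimes R_\lambda$ vanishes on a non-isolated set. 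So either the hypothesis should be stated as the full system \eqref{eq:contacton-Legendrian-bdy-intro} (which is what every map in this paper satisfies, so the application is unaffected), or the conclusion must be weakened to concern the zeros of $d^\pi w$ under the assumption $d^\pi w\not\equiv 0$. With that hypothesis made explicit, your argument is a correct reconstruction; note also that when $d^\pi w\equiv 0$ your scheme degenerates gracefully only because of the second equation, which then forces $w^*\lambda$ to be a harmonic $1$-form, whose zeros are isolated unless it vanishes identically---this is exactly the borderline case your counexample-free version needs to handle.
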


A generic transversality under the perturbation of boundaries has been also established in
\cite{oh:contacton-transversality}.

\begin{thm}[Theorem 4.4 \cite{oh:contacton-transversality}]
\label{thm:trans-under-bdy} Let $(M,\xi)$ be a contact manifold, and let
$\lambda$ a contact form  be given. We consider the same equation
considered in Theorem \ref{thm:trans}. Fix $J$ and $k$ and consider
$\vec R$ that intersect transversally pairwise and no triple intersections.

Then there exists a residual subset of $\vec R = (R_0, \ldots, R_k)$
of Legendrian submanifolds such that  the moduli space
$\CM(M,\lambda, \vec R;\overline \gamma, \underline \gamma) $ is transversal.
\end{thm}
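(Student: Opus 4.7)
The plan is to set up a universal moduli space parameterized by the space of Legendrian boundary data, rather than by $J$, and run a Sard--Smale argument in the tradition of \cite{oh:fredholm}. Fix $J$, $k$, and an integer $\ell$ with $0 < \ell < k - 2/p$. Denote by $\mathfrak{Led}^\ell(M,\xi;\overline \gamma, \underline \gamma)$ the $C^\ell$-completion of the space of tuples $\vec R = (R_0,\ldots, R_k)$ of Legendrian submanifolds having prescribed pairwise transverse intersections, no triple intersections, and the given Reeb chord asymptotics. Locally near a reference tuple $\vec R^{(0)}$, I would parameterize nearby Legendrian tuples by $(k{+}1)$-tuples of functions $h_i \in C^\ell(R_i^{(0)})$: each $h_i$ extends canonically to a contact Hamiltonian whose contact vector field deforms $R_i^{(0)}$ through Legendrians, yielding a normal vector field $V_{h_i}$ along $R_i^{(0)}$. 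Form the universal moduli space
\be\label{eq:MMuniv-Leg}
\CM^{\text{\rm univ}} = \left\{(w, \vec R) \, : \, \vec R \in \mathfrak{Led}^\ell(M,\xi;\overline\gamma,\underline\gamma),\;
w \in \CM(M,\lambda, \vec R; J; \overline \gamma, \underline \gamma) \right\}.
\ee

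The linearization of the defining equation at $(w, \vec R)$ has domain consisting of pairs $(Y, (h_i))$ with $Y \in \Omega^0_{k,p}(w^*TM)$ and $h_i \in C^\ell(R_i)$, subject to the coupled boundary relation
\be\label{eq:coupled-bdy}
Y(z) - V_{h_i}(w(z)) \in T_{w(z)} R_i \quad \text{for } z \in \overline{z_iz_{i+1}}.
\ee
Since the interior equation does not depend on $\vec R$, the linearized operator is simply
\[
D\aleph^{\text{\rm univ}}: (Y, (h_i)) \longmapsto D\Upsilon(w)(Y),
\]
with the enlarged boundary flexibility \eqref{eq:coupled-bdy}. The Fredholm property follows from Proposition \ref{prop:fredholm} together with the fact that the extra factor $\prod_i C^\ell(R_i)$ enters only through a compact boundary coupling.

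The crux is surjectivity of $D\aleph^{\text{\rm univ}}$, which I would prove via the $L^2$-cokernel characterization of Proposition \ref{prop:L2-cokernel}. Let $(\eta, g)$ lie in the $L^2$-cokernel. Pairing with $(Y, 0)$ for $Y$ vanishing on $\del \dot \Sigma$ shows that $(\eta, g)$ satisfies the bulk adjoint equations \eqref{eq:adjoint-eta}--\eqref{eq:adjoint-g} on $\dot \Sigma$. Pairing with $(Y, 0)$ for $Y$ with boundary in $T\vec R$ produces the standard Legendrian boundary condition on $(\eta, g)$. The new ingredient is to pair against $(V_{h_i}, (h_i))$, which is admissible by \eqref{eq:coupled-bdy}; integration by parts then produces an additional boundary pairing along $w(\overline{z_iz_{i+1}})$ against the $V_{h_i}$-direction normal to $R_i$. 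Since Legendrian deformations realize \emph{all} normal directions of $R_i$ in $M$ modulo $TR_i$ as $h_i$ ranges over $C^\ell(R_i)$, and since $h_i$ may be localized near any point $w(z)$ with $z$ on a boundary arc, Lemma \ref{lem:gelfand} forces $(\eta, g)$ to vanish in the Legendrian-normal components pointwise on $w(\del \dot \Sigma)$. Combined with the elliptic adjoint equation satisfied by $(\eta, g)$, unique continuation (Proposition \ref{prop:unique-conti}) propagates this vanishing to all of $\dot \Sigma$, so $(\eta, g) \equiv 0$.

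Having established that $\CM^{\text{\rm univ}}$ is a $C^\ell$ Banach manifold and that the projection $\Pi_2: \CM^{\text{\rm univ}} \to \mathfrak{Led}^\ell(M,\xi;\overline\gamma,\underline\gamma)$ is Fredholm with index equal to that of $D\Upsilon(w)$, the Sard--Smale theorem produces a residual set of $C^\ell$-regular values $\vec R$. A standard diagonal/Taubes bootstrap as in \cite{mcduff-salamon-symplectic} passes to the $C^\infty$-topology, yielding the asserted residual subset. The main obstacle will be the third paragraph: verifying rigorously that the map $h_i \mapsto V_{h_i}|_{R_i}$ surjects onto the normal bundle $TM|_{R_i}/TR_i$ (which relies on the standard identification of infinitesimal Legendrian deformations with $C^\infty(R_i)$, via the contact form giving $V_{h_i}$ a $R_\lambda$-component equal to $h_i$ plus transverse-to-$\xi$ motion determined uniquely by the Legendrian condition), and showing that the resulting boundary pairing is genuinely unconstrained so that the point-support distribution argument applies componentwise against both $\eta$ and $g$.
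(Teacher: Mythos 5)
Your overall skeleton --- a universal moduli space fibered over the space of Legendrian tuples $\vec R\in\mathfrak{Led}(M,\xi)$, a coupled linearized boundary condition of the form \eqref{eq:coupled-bdy}, surjectivity via the $L^2$-cokernel characterization, then Sard--Smale for the projection and a Taubes-type passage from $C^\ell$ to $C^\infty$ --- is the expected one and matches the framework the paper itself sets up in Section 2 (the union over $\vec R\in\mathfrak{Led}(M,\xi)$ in the definition of $\CF^{\text{\rm univ}}$); note that the present paper does not reprove this theorem but quotes it from \cite{oh:contacton-transversality}. However, the crux step of your argument, the vanishing of the cokernel under boundary perturbations, has two genuine gaps as written. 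First, the pairing of a cokernel element $(\eta,g)$ against $(V_{h_i},(h_i))$ is a \emph{boundary integral} over the arcs $\overline{z_iz_{i+1}}$ of the trace of $(\eta,g)$ against $V_{h_i}\circ w$, and $V_{h_i}$ depends on the full $1$-jet of $h_i$: under the standard identification $TM|_{R_i}/TR_i\cong T^*R_i\oplus\R$, the deformation is $h_i\mapsto (dh_i,h_i)$, with Reeb component $h_i$ and $\xi$-component $d\lambda$-dual to $dh_i$. So Lemma \ref{lem:gelfand} is not the relevant tool at all --- it concerns distributions supported at a single point and is used in the (0- and 1-jet) evaluation transversality proofs --- and what is actually needed is an integration by parts along the boundary curve, together with an injectivity input: if $w|_{\del\dot\Sigma}$ traverses the same portion of $R_i$ several times (or is constant along an arc), a localized $h_i$ contributes through \emph{all} preimage intervals and these contributions can cancel, so "localize $h_i$ near $w(z)$, hence pointwise vanishing of the normal components of $(\eta,g)$ along the boundary" does not follow without a somewhere-injectivity (or non-constancy plus unique continuation) statement for the boundary restriction of $w$, which you neither hypothesize nor derive.

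Second, the propagation step is unjustified: Proposition \ref{prop:unique-conti} is a unique continuation statement about the zero set of $dw$ for contact Cauchy--Riemann \emph{maps}; it says nothing about solutions $(\eta,g)$ of the adjoint system of Proposition \ref{prop:L2-cokernel}. What you need is unique continuation from boundary Cauchy data for that elliptic system: the adjoint boundary condition satisfied by $(\eta,g)$ together with the vanishing of the trace forced by the boundary perturbations gives vanishing Cauchy data along an open boundary arc, and one must then invoke a reflection/Aronszajn--Carleman type argument (or doubling) for the operator $\delta^{\nabla^{\pi(0,1)}}+\cdots$ coupled with $\Delta$ to conclude $(\eta,g)\equiv 0$ on $\dot\Sigma$. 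As it stands, citing Proposition \ref{prop:unique-conti} leaves precisely this step open. A smaller point: the assertion that the factor $\prod_i C^\ell(R_i)$ "enters only through a compact boundary coupling" is loose --- this factor is infinite dimensional, and what Sard--Smale requires is the Fredholm property of the projection $\Pi_2$ restricted to the universal zero set (as in Theorem \ref{thm:trans}(2)), not of the universal linearization itself; you should phrase the Fredholm statement accordingly.
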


\section{The (0-jet) evaluation transversality: Statement}
\label{sec:ev-transverse-statement}

We first recall the off-shell setting of the study of linearized operator in Theorem \ref{thm:linearization}.

We consider the moduli space
$$
\CM_k((\dot \Sigma,\del \dot \Sigma),(M,\vec R)), \quad \vec R = (R_1,\cdots, R_k)
$$
 of finite energy maps $w: \dot \Sigma \to M$ satisfying the equation
\eqref{eq:contacton-Legendrian-bdy-intro} as before.

We consider the space given in \eqref{eq:offshell-space}
$$
\CF(\dot \Sigma, \del \dot \Sigma),(M, \vec R);\underline \gamma,\overline \gamma)
$$
consisting of smooth maps satisfying the boundary condition \eqref{eq:bdy-condition}
and the asymptotic condition \eqref{eq:limatinfty}.

This being said, since the asymptotic conditions will be fixed, we simply write the space
\eqref{eq:offshell-space} as
$$
\CF((\dot \Sigma,\del \dot \Sigma),(M,\vec R))
$$
and the corresponding moduli space as
$$
\CM((\dot \Sigma,\del \dot \Sigma),(M,\vec R)).
$$
We again consider the covariant linearized operator
$$
D\Upsilon(w): \Omega^0(w^*TM,(\del w)^*T\vec R) \to
\Omega^{(0,1)}(w^*\Xi) \oplus \Omega^2(\Sigma)
$$
of the section
$$
\Upsilon: w \mapsto \left(\delbar^\pi w, d(w^*\lambda \circ j)\right), \quad
\Upsilon: = (\Upsilon_1,\Upsilon_2)
$$
as before. 

We will treat the two cases,  evaluation at an interior marked point and one at a boundary marked point,
separately. We denote by the subindex $(k, \ell)$ the number of interior and boundary marked points respectively.

Consider the universal moduli space
\beastar
&{}& \CM_{(0,1)}((\dot \Sigma, \del \dot \Sigma, (M,R)) \\
& = & \{((j,w),J, z) \mid w: \Sigma \to
M, \, \Upsilon(J,(j,w))  = 0,\, \, w(\del \dot \Sigma) \subset \vec R,\,  z \in \Int \dot \Sigma \}.
\eeastar
The evaluation map $\ev^+: \CM_{(0,1)}((\dot \Sigma,\del \dot \Sigma,(M, \vec R)) \to M$ is defined by
$$
\ev^+((j,w),z) = w(z)
).
$$
We then have the fibration
$$
\widetilde \CM_{(0,1)}\left((\dot \Sigma,\del \dot \Sigma),(M, \vec R)\right)  = \bigcup_{J \in \CJ_\lambda}
\widetilde \CM_{(0,1)}\left((\dot \Sigma,\del \dot \Sigma),(M, \vec R);J\right)
\to \CJ_\lambda.
$$
We have the universal ($0$-jet) evaluation map
$$
\Ev^+: \widetilde \CM_{(0,1)}\left((\dot \Sigma,\del \dot \Sigma),(M, \vec R)\right) \to M.
$$
The basic generic transversality is the following.

\begin{thm}[$0$-jet evaluation transversality]\label{thm:0jet}
The evaluation map
$$
\Ev^+: \widetilde \CM_{(0,1)}\left((\dot \Sigma,\del \dot \Sigma),(M, \vec R)\right) \to M
$$
is a submersion. The same holds for the boundary evaluation map
$$
\Ev_\del: \widetilde \CM_{(0,1)}\left((\dot \Sigma,\del \dot \Sigma,)(M, \vec R)\right) \to \vec R.
$$
\end{thm}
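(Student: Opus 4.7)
My plan is to reduce the submersion property of $\Ev^+$ to the transversality of the augmented map $\aleph_0$ from \eqref{eq:aleph1},
$$
\aleph_0(J,(j,w),z) = \bigl(\Upsilon(J,(j,w)),\, w(z)\bigr),
$$
against $o_{\CC\CD}\times\{p\}$ for every $p \in M$, exactly as formulated in Proposition \ref{prop:0-jet}. Its linearization at $(J,(j,w),z_0)$ with $w(z_0)=p$ is
$$
(L,Y,v)\longmapsto \Bigl(D\Upsilon(w)(Y)+\tfrac12 L(d^\pi w\circ j),\; Y(w(z_0))+dw(z_0)(v)\Bigr).
$$
By the Fredholm property of Proposition \ref{prop:fredholm}, appending the finite-rank evaluation component preserves closed range, so it suffices to establish density of the image, as in Proposition \ref{prop:0jet-dense}.

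Arguing by duality, I would assume for contradiction that a nonzero pair $((\eta,g),X)\in \CC\CD^{*}_{(J,(j,w))}\oplus T^{*}_{w(z_0)}M$ annihilates the image. Testing against triples $(Y,L,v)$ with $v=0$ and $Y$ compactly supported in $\dot\Sigma\setminus\{z_0\}$ places $(\eta,g)$ in the $L^2$-cokernel of $D\Upsilon(w)$ away from $z_0$, so by Proposition \ref{prop:L2-cokernel} it satisfies the adjoint elliptic system \eqref{eq:adjoint-eta}--\eqref{eq:adjoint-g} there. Allowing arbitrary $L\in T_J\CJ_\lambda$ yields the additional constraint
$$
\int_{\dot\Sigma}\bigl\langle \eta,\, L(d^\pi w\circ j)\bigr\rangle = 0\quad\text{for every } L.
$$
The abundance of $\lambda$-compatible fiberwise perturbations of $J|_\Xi$ forces $\eta(z)=0$ wherever $d^\pi w(z)\neq 0$; the unique continuation Proposition \ref{prop:unique-conti} then propagates $\eta\equiv 0$ on $\dot\Sigma\setminus\{z_0\}$. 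Substituting into \eqref{eq:adjoint-g} reduces it to $\Delta g = 0$, and the exponential asymptotic decay at the punctures together with the Legendrian boundary condition $g|_{\del\dot\Sigma}=0$ forces $g\equiv 0$ on $\dot\Sigma\setminus\{z_0\}$.

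The delicate step, which I expect to be the main obstacle, is converting these "away from $z_0$" conclusions into global ones while simultaneously eliminating $X$. Because the test fields were only required to vanish near $z_0$, a priori $(\eta,g)$ satisfies the adjoint system globally only modulo a distribution supported at $\{z_0\}$ arising from the evaluation pairing with $X$. By Lemma \ref{lem:gelfand} this singular part must be a finite sum $\sum_{|\alpha|\leq N} D^\alpha \delta_{z_0}$. I would then revisit the pairing identity, plugging in vector fields $Y$ whose value and first derivatives at $z_0$ are prescribed independently of their global behaviour: $Y(w(z_0))$ ranges over all of $T_{w(z_0)}M = \Xi_{w(z_0)}\oplus \R\, R_\lambda$ via appropriately chosen bump vector fields, and the unwanted contribution of $D\Upsilon(w)(Y)$ elsewhere can be counterbalanced by a suitable $L$ thanks to the density already obtained off of $z_0$. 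This forces $X=0$ and successively the vanishing of all coefficients of $D^\alpha\delta_{z_0}$, hence $\eta\equiv 0$ and $g\equiv 0$ globally, contradicting the original nontriviality assumption.

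The boundary evaluation statement is handled in parallel: at a boundary marked point $z_0\in\del\dot\Sigma$ the deformation $Y$ is constrained to $TR_i$ along the boundary so that $Y(w(z_0))\in T_{w(z_0)}R_i$, but this still exhausts the target of $\Ev_\del$, and the adjoint equation, $J$-perturbation, and unique continuation steps go through verbatim after respecting the Legendrian boundary conditions throughout.
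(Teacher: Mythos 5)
Your overall scheme is the same as the paper's (reduce the submersion to transversality of $\aleph_0$, dualize, and kill the annihilating pair), but the decisive step is asserted rather than proved. From the identity $\int_{\dot\Sigma}\langle \eta, L(d^\pi w\circ j)\rangle=0$ for all $L\in T_J\CJ_\lambda$ you conclude that $\eta(z)=0$ wherever $d^\pi w(z)\neq 0$. That deduction requires localizing the perturbation $L$ in the target $M$ so that only one point of the domain contributes to the integral; when $w$ is multiply covered (not somewhere injective), several preimages of $w(z)$ contribute simultaneously and their contributions can cancel, so pointwise vanishing of $\eta$ does not follow from the "abundance" of fiberwise perturbations of $J|_\Xi$. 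This is exactly the issue the paper has to confront: its argument first gives $\eta=0$ only near somewhere injective points, and then spends a separate paragraph bypassing somewhere injectivity for contact instantons --- using finite energy and the removable singularity theorem to identify the asymptotic limit as an iso-speed Reeb chord with $T\neq 0$, invoking the genericity statement of Theorem \ref{thm:Reeb-chords} (3) to exclude closed chords, and exploiting the resulting non-constant local multiplicity near a puncture --- before unique continuation yields $\eta\equiv 0$ on $\dot\Sigma\setminus\{z_0\}$. Without this (or an explicit somewhere-injectivity hypothesis) your step fails. Relatedly, Proposition \ref{prop:unique-conti} is a statement about the zero set of $dw$ of the map $w$, not a continuation principle for $\eta$; what propagates $\eta\equiv 0$ is unique continuation for the adjoint elliptic system of Proposition \ref{prop:L2-cokernel}, and it can only start once $\eta$ is known to vanish on some open set --- which is precisely what the multiple-cover problem obstructs.

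Two smaller points. First, after $\eta\equiv 0$ off $z_0$, your maximum-principle argument for $g$ on $\dot\Sigma\setminus\{z_0\}$ ignores that $g$ is a priori only a distribution which may be singular at $z_0$; the paper controls this through the point-support structure theorem (Lemma \ref{lem:gelfand}) combined with the regularity constraint that the annihilator lies in $(W^{1,p})^*\cong W^{-1,q}$, which caps the order of the point-supported part at zero and makes the cutoff argument with $Y-\chi\, Y(z_0)$ close; your plan to eliminate $X$ and then the coefficients of $D^\alpha\delta_{z_0}$ "successively" should be replaced by, or supplemented with, this order-zero observation. Second, for the boundary evaluation map the paper relies on the more delicate cokernel analysis of the boundary case (its Proposition 4.4 in the cited reference), so "goes through verbatim" is optimistic, though the strategy is indeed parallel.
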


\section{The (0-jet) evaluation transversality: Proof}
\label{sec:ev-transverse-proof}

We closely follow the scheme exercised for the proof of
evaluation transversality given in \cite[Section 10.5]{oh:book1}  closely which
in turn follows the scheme of the generic 1-jet transversality results proved
in \cite{oh-zhu:ajm}, \cite{oh:higher-jet} for the case of pseudoholomorphic curves in symplectic geometry.

We start with the case of interior marked point and consider the map
\bea\label{eq:aleph1}
\aleph_0 & : & \CJ_\lambda \times \CM_{\dot \Sigma}
\times \widetilde \CF_{(1,0)}(\dot \Sigma,M) \to \CC\CD
\times M \nonumber \\
&{}& (J,(j,w),z_0)  \mapsto  (\Upsilon(J,(j,w)),w(z_0)).
\eea
Here the subindex $0$ in $\aleph_0$ stands for the `0-jet'.

We denote by $\pi_i$ the projection from
$\CJ_\lambda \times \widetilde \CF_{(1,0)}(\dot \Sigma,M)$ to the $i$-th factor with $i=1, \, 2$.
Then we introduce
 \beastar
\widetilde \CM_{(0,1)}(\dot \Sigma,M;\{p\}) & = & \aleph_0^{-1}(o_{\CC\CD} \times \{p\} )\\
\widetilde \CM_{(0,1)}(\dot \Sigma, M;\{p\};J)
& = & \widetilde \CM_{(0,1)}(\dot \Sigma, M;\{p\}) \cap \pi_1^{-1}(J).
\eeastar

The following is a fundamental proposition for the proof of Theorem \ref{thm:0-jet}
as in the standard strategy exercised in the similar transversality result for the study of
pseudoholomorphic curves in  \cite[Section 10.5]{oh:book1} which in turn follows the scheme used in
\cite{oh-zhu:ajm}  for the 1-jet transversality proof for the case of pseudoholomorphic curves.
We apply the same scheme with the replacement of pseudoholomorphic curves by contact instantons
first for the 0-jet case in this part and for the 1-jet case in the next part.
Because the nature of equation is different, especially \emph{because the contact instanton
equation involves the second derivatives}, the proof involves additional complication beyond
that of \cite{oh-zhu:ajm}.

\begin{thm}\label{thm:0-jet}
The map $\aleph_0$ is transverse to the submanifold
$$
o_{\CC\CD} \times \{p\} \subset \CC\CD \times M.
$$
\end{thm}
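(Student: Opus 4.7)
The plan is to prove surjectivity of the linearization
\[
D\aleph_0(J,(j,w),z_0)\colon (L,Y,v)\ \longmapsto\ \bigl(D\Upsilon^{\text{\rm univ}}_{(J,(j,w))}(L,Y),\ Y(w(z_0))+dw(z_0)(v)\bigr)
\]
at every $(J,(j,w),z_0)\in\aleph_0^{-1}(o_{\CC\CD}\times\{p\})$. Since $o_{\CC\CD}$ is the zero section and $p\in M$ is a point, transversality of $\aleph_0$ to $o_{\CC\CD}\times\{p\}$ is equivalent to surjectivity of this map onto the normal space $\CC\CD_{(J,(j,w))}\times T_pM$. Following the earlier exposition I fix the conformal parameter $j$ (so $b=0$). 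By Proposition \ref{prop:fredholm} the operator $D\Upsilon(w)$ is Fredholm and $T_pM$ is finite-dimensional, so the image is closed, and it suffices to prove density.

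The second step is to suppose, for contradiction, that the image fails to be dense, and produce a nonzero annihilator $((\eta,g),X^*)\in\CC\CD^*\times T_p^*M$. Testing first with $L=0$, $v=0$, and $Y$ supported in $\dot\Sigma\setminus\{z_0\}$ yields $\langle(\eta,g),D\Upsilon(w)(0,Y)\rangle=0$, so by Proposition \ref{prop:L2-cokernel} the pair $(\eta,g)$ solves the adjoint system \eqref{eq:adjoint-eta}--\eqref{eq:adjoint-g} classically on $\dot\Sigma\setminus\{z_0\}$. Allowing $Y$ to have support near $z_0$ then contributes the pointwise term $X^*(Y(w(z_0)))$, and integration by parts shows that the distributional defect of the adjoint system on all of $\dot\Sigma$ is supported on $\{z_0\}$. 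By Lemma \ref{lem:gelfand} this defect is a finite combination of $\delta_{z_0}$ and its derivatives, whose coefficients are prescribed by the pairing of $X^*$ with jets of $Y$ at $z_0$.

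The third step uses variations in $J$ to kill $\eta$. By Lemma \ref{lem:DY-univ}, taking $Y=0$ and varying $L\in T_J\CJ_\lambda$ gives
\[
\bigl\langle \eta,\ \tfrac12\,L(d^\pi w\circ j)\bigr\rangle=0
\]
for all admissible $L$, where $L$ ranges over endomorphisms of $\Xi$ anti-commuting with $J$. This forces $\eta\equiv 0$ on $U:=\{z:d^\pi w(z)\neq 0\}$. By the unique continuation Proposition \ref{prop:unique-conti}, the zero set of $d^\pi w$ has no accumulation point, so $U$ is dense and $\eta\equiv 0$ on $\dot\Sigma$. Feeding $\eta=0$ back into \eqref{eq:adjoint-g} reduces it to a homogeneous equation $\Delta g=0$ with the natural boundary conditions; the maximum principle together with the exponential decay at the nondegenerate asymptotic chords (cf.\ Theorem \ref{thm:Reeb-chords}) then forces $g\equiv 0$. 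The surviving relation $X^*(Y(w(z_0))+dw(z_0)(v))=0$ for arbitrary $Y\in T_w\CF$ and $v\in T_{z_0}\dot\Sigma$ forces $X^*=0$, since $Y\mapsto Y(w(z_0))$ alone surjects onto $T_{w(z_0)}M=T_pM$; this contradicts the nontriviality of $((\eta,g),X^*)$.

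The main obstacle I expect is the careful analysis of the point-supported defect at $z_0$, which is the complication flagged in the excerpt as arising from the second-order nature of $D\Upsilon_2$. Because $D\Upsilon_2$ is a Laplacian in the Reeb component $\lambda(Y)$, integration by parts against a test field $Y$ with prescribed $1$-jet at $z_0$ produces not only $\delta_{z_0}$-terms but also first-derivative terms $\del\delta_{z_0}$ paired against $g$, whereas the surface term $X^*(Y(w(z_0)))$ pairs only with the $0$-jet of $Y$. Matching the coefficients of the distinct jets of $\delta_{z_0}$ in the decomposition supplied by Lemma \ref{lem:gelfand} therefore requires choosing $Y$ with independently prescribed value and gradient at $z_0$, which is possible because $T_w\CF$ contains bump sections with arbitrary Taylor data at any interior point. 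The resulting linear system in these jet coefficients is non-degenerate by a second-order application of Proposition \ref{prop:unique-conti} to the contact-Cauchy-Riemann map $w$, enabling the decoupling of the point contribution from the interior regularity step and completing the argument.
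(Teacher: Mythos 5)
Your overall scheme — reduce to density of the image of the linearization, pick a dual annihilator $((\eta,g),X^*)$, kill $\eta$ by varying $J$, then analyze the part supported at $z_0$ — is the same as the paper's, but two of your steps have genuine gaps. First, the deduction that $\langle\eta,\tfrac12 L(d^\pi w\circ j)\rangle=0$ for all $L$ ``forces $\eta\equiv0$ on $U=\{d^\pi w\neq0\}$'' is not justified: $L$ is a variation of $J$ defined on $M$, so its value along the curve is $L(w(z))$ and is necessarily the same at all points of $\dot\Sigma$ with the same image. At non-injective points (in particular for multiply covered $w$) a localized choice of $L$ only gives vanishing of a \emph{sum} of contributions over all preimages of the image point, not pointwise vanishing of $\eta$. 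This is exactly why the classical pseudoholomorphic-curve argument needs somewhere injective points, and why the paper inserts a separate argument — using that the asymptotic limits are nondegenerate, nonconstant Reeb chords which by Theorem \ref{thm:Reeb-chords} (3) cannot be closed, so the local multiplicity of $w$ is non-constant near the puncture — to bypass somewhere injectivity for contact instantons. Your proposal contains no substitute for this step, and Proposition \ref{prop:unique-conti} alone (density of $\{d^\pi w\neq 0\}$) does not repair it.

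Second, your treatment of the point-supported defect at $z_0$ is not a proof. You propose to match coefficients of $\delta_{z_0}$ and its derivatives by choosing $Y$ with independently prescribed value and gradient at $z_0$, with nondegeneracy of the resulting system coming from ``a second-order application'' of Proposition \ref{prop:unique-conti}; no such mechanism exists — unique continuation for $w$ says nothing about that linear system in the annihilator. The paper's Lemma \ref{lem:eta=0} resolves the point-support issue differently and more simply: since the annihilator lies in $(W^{1,p})^*\cong W^{-1,q}$, the structure theorem (Lemma \ref{lem:gelfand}) together with this Sobolev regularity forces the point-supported distribution to have order zero, i.e.\ $\eta=\beta_{z_0}\,\delta_{z_0}$, so derivative-of-delta terms are excluded from the outset and there is nothing to ``match''; then the cutoff decomposition $Y=(Y-\chi\, Y(z_0))+\chi\, Y(z_0)$ shows $\langle\delbar^\pi Y^\pi,\eta\rangle=0$ and $\int\lambda(Y)\,\Delta f\,dA=0$ for all smooth $Y$, so $(\eta,f)$ is a global weak solution of the adjoint system, hence continuous by elliptic regularity, hence zero because it already vanishes off $z_0$. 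Your alternative route for the Reeb component (maximum principle plus asymptotic decay for $\Delta g=0$) presupposes that $g$ is harmonic across $z_0$, which is precisely the point in question. Until these two steps are repaired along such lines, the argument is incomplete.
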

\begin{proof} Its linearization $D\aleph_0(J,(j,w),z)$ is given by the map
\be\label{eq:DUpsilon} (L,(b,Y),v) \mapsto
\left(D_{J,(j,w)}\Upsilon(L,(b,Y)), Y(w(z)) + dw(z)(v)\right) \ee
for
$$
L  \in T_J\CJ_\lambda, \, b \in T_j\CM_{\dot \Sigma}, \, v \in
T_z \dot \Sigma , \, Y \in T_w\FF(\Sigma, M;\beta).
$$
This defines a linear map
$$
T_J\CJ_\lambda \times T_j\CM_{\dot \Sigma} \times T_w\FF(\Sigma, M;\beta)
\times T_z \dot \Sigma  \to \CC\CD_{(J,(j,w))} \times T_{w(z)}M
$$
on $W^{1,p}$. But for the map $\aleph_0$ to be differentiable, we need to choose the
completion of $\FF(\dot \Sigma,M)$.

We take the Sobolev completion in the $W^{k,p}$-norm for at least $k \geq 2$. We take $k = 2$.
We would like to prove that this linear map is a submersion
at every element $(J, j, w, z_0) \in \widetilde\CM_1(\dot \Sigma,M)$ i.e.,
at the pair $(w,z_0)$ satisfying
$$
\Upsilon(J,(j,w)) = 0, \quad w(z_0) = p.
$$
For this purpose, we need to study solvability of the system of
equations
 \be D_{J,(j,w)}\Upsilon(L,(b,Y),v) = (\gamma, \omega), \quad
Y(w(z_0)) + dw(v) = X_0
\ee
for any given $(\gamma, \omega) \in \CC\CD_w$ and $X_0$, i.e.,
$$
\gamma \in \Omega_{(j,J)}^{\pi(0,1)}(w^*TM), \,
\omega \in \Omega^2(\dot \Sigma), \quad X_0 \in T_{w(z_0)}M.
$$
For the current study of evaluation transversality,
the domain complex structure $j$ does not play much
role in our study. Especially it does not play any role throughout
our calculations except that it appears as a parameter. Therefore we will fix $j$
throughout the proof. Then it
will be enough to consider the case $b = 0 $. Then the above equation
is reduced to
\be\label{eq:b=0}
D_{J,w}\Upsilon (L,Y) = (\gamma,\omega), \quad Y(w(z_0)) + dw(v) = X_0.
\ee

Firstly,  we study \eqref{eq:b=0} for $Y \in W^{2,p}$. We regard
$$
\CC\CD_{(J,(j,w))} \times T_{w(z_0)}M
$$
as a Banach space with the norm $\|\cdot \|_{1,p} + \|\cdot \|_p +  |\cdot|$,
where $|\cdot|$ is any norm induced by an inner product on $T_xM$.

We will show that the image of the map
\eqref{eq:DUpsilon} restricted to the elements of the form
$$
(L,(0,Y),v)
$$
is onto as a map
$$
T_J \CJ_\lambda \times \Omega^0_{2,p}(w^*TM)
\to \CC\CD_{J,w}^{1,p} \times T_{w(z_0)}M
$$
where $(w,j,z_0,J)$ lies in $\Upsilon_1^{-1}(o_{\CH^{(0,1)}}\times \Xi)$, and we set
\be\label{eq:CD1p}
\CC\CD_{J,w}^{1,p} := \Omega^{(0,1)}_{1,p}(w^*\Xi) \times \Omega^2_p (\dot \Sigma).
\ee
For the clarification of notations, we denote the natural pairing
$$
\CB \times \CB^* \to \R
$$
by $\langle \cdot, \cdot \rangle$ and the inner product on $T_xM$ by $(\cdot,
\cdot)_{x}$.

We now prove the following which will then finish the proof by the ellipticity
of the linearization map. The remaining part of proof will be occupied by the proof
of this statement.

\begin{prop}\label{prop:1jet-dense}
The subspace
$$
\Image \aleph_0 \subset \CB_0 \oplus T_{w(z_0)}M
$$
is dense.
\end{prop}
\begin{proof}
Let $((\eta, f)X_p) \in \CB^* \times T_pM$ satisfy
\be\label{eq:0jet=0}
\left\langle D_w\Upsilon(J,(j,w))Y + \left(\frac{1}{2}L \cdot d^\pi w \circ j,0\right),(\eta,f) \right\rangle
+ \langle Y, \delta_{z_0} X_p \rangle = 0
\ee
for all $Y \in \Omega^0_{2,p}(w^*TM)$ and $L$ where $\delta_{z_0}$ is the
Dirac-delta function supported at $z_0$.
By the Hahn-Banach lemma, it will be
enough to prove
\be\label{eq:0jet-vanishing}
(\eta,f) = 0 = X_p.
\ee
In the derivation of \eqref{eq:0jet=0},  we have used the formula
$$
\Upsilon(J,(j,w)) = \left(\delbar^\pi_J w, d(w^*\circ \lambda \circ j)\right)
$$
to compute the linearization of $\aleph_0$. In particular, we have
$$
D_1\aleph_0(L)) = \left(\frac{1}{2}L \cdot d^\pi w \circ j,0\right)
$$
in the direction of $J$ where the second factor of the value of $\Upsilon$
does not depend on $J$.  Obviously, we have
$D_2\aleph_0(Y)(Y) = D\Upsilon(Y)$.

Under this assumption, we would like to show \eqref{eq:0jet-vanishing}.
Without loss of any generality, we
may assume that $Y$ is smooth as before since $C^\infty(w^*TM)
\hookrightarrow \Omega^0_{2,p}(w^*TM)$ is dense.
Taking $L=0$ in \eqref{eq:0jet=0}, we obtain
\be \label{eq:coker} %
\langle D_w\Upsilon(J,(j,w)) Y, (\eta,f) \rangle + \langle  Y,
\delta_{z_0} X_p \rangle = 0 \quad \mbox{ for all $Y$
of $C^{\infty}$ }.
\ee %
Therefore by definition of the distribution derivatives, $\eta$ satisfies
$$
(D_w\Upsilon(J,(j,w)))^\dagger (\eta,f) - \delta_{z_0} X_p = 0
$$
as a distribution, i.e.,
$$
(D_w\Upsilon(J,(j,w)))^\dagger (\eta,f) = \delta_{z_0} X_p
$$
where $(D_w \Upsilon(J,(j,w)))^\dagger$ is the formal adjoint of $D_w
\Upsilon(J,(j,w))$ whose symbol is of the same type as $D_w \Upsilon_{(j,J)}$ and so is
an elliptic first order differential operator. (See \eqref{eq:matrixDUpsilon} for the linearization formula and
recall that $(\delbar_J^\pi)^\dagger = - \del_J^\pi$ modulo zero order operators.)
By the elliptic regularity, $(\eta,f)$ is a classical solution on $\Sigma \setminus \{z_0\}$.

On the other hand, by setting $Y = 0$ in \eqref{eq:0jet=0},  we get
\be
\label{eq:1/2L} \langle L\cdot dw\circ j, (\eta,f) \rangle=0
\ee
for all $L\in T_{J} \CJ_\lambda$. From this identity, the
argument used in the transversality proven in the previous section
shows that $\eta=0$ in a small neighborhood of any somewhere injective point in $\Sigma
\setminus \{\chi^+\}$. 

We now show that in the case of contact instantons we can bypass this somewhere
injectivity hypothesis as follows.
Recall that we always assume that $\dot \Sigma$ carries at least one puncture for the
moduli space to contain an element whose image is completely contained in a Reeb trajectory.
 By the finite energy condition and by the removable singularity
theorem \cite[Theroem 8.7]{oh:contacton}, the asymptotic limit of $w$ in the
strip-like coordinates is an iso-speed Reeb chord $(\gamma,T)$ 
for $\gamma;[0,1] \to M$ satisfying $\dot \gamma = T R_\lambda(\gamma(t))$ with $T \neq 0$.
By the genericity hypothesis stated in Theorem \ref{thm:Reeb-chords} (3),
the chord cannot be a closed Reeb chord. Therefore we can decompose
$$
[0,1] = I_+ \cup I_-
$$
so that $\text{\rm mult}_\gamma(p)$ is even for $p \in I_+$ and odd for $p \in I_-$.
This shows that  when $w$ is not somewhere injective near puncture,
the multiplicity of $w$ is not constant near the relevant puncture. Then the 
standard argument of the transversality still concludes $\eta = 0$
by the unique continuation theorem, on $ \Sigma \backslash\{z_0\}$ and 
so the support of $\eta$ as a distribution
on $\Sigma$ is contained at the one-point subset $\{z_0\}$ of $\Sigma$.

The following lemma will then conclude the proof of Proposition \ref{prop:0jet-dense}.
We omit its proof since we will give the proof of more nontrivial statement given in
Lemma \ref{lem:eta=0} for the 1-jet evaluation case.

\begin{lem} \label{lem:eta=0}  $\eta$ is a distributional solution of
$(D_w \Upsilon(J,(j,w)))^\dagger (\eta,\omega) = 0$ on $\Sigma$
and so continuous.
In particular, we have $(\eta,\omega) = 0$ in $(\CC\CD)^*$.
\end{lem}

Once we know $(\eta,\omega) = 0$, the equation \eqref{eq:0jet=0} is reduced to
the finite dimensional equation
\be\label{eq:simple0}
(Y(z_0),X_p)_{z_0} = 0
\ee
It remains to show that $X_p = 0$. For this, we have only to
show that the image of the evaluation map
$$
Y \mapsto  Y(z_0)
$$
is surjective onto $T_{p}M$, which is now obvious.
\end{proof}

This in turn concludes the proof of Theorem \ref{thm:0-jet}.
\end{proof}

\section{Generic 1-jet evaluation transversality: Statement}

In this section, we study the 1-jet transversality of the derivative of the evaluation map
$\ev:\widetilde \CM_{(0,1)} (\dot \Sigma,M;J) \to M$.
We will be particularly interested in the transversality result against the submanifold
$$
\Xi \subset TM
$$
for the contact distributions. For the study, the following reformulation of this transversality
when a contact form $\lambda$ is given.

\begin{lem}\label{lem:lambda-transversality}
Let $\lambda$ be a contact form of a contact manifold $(M,\Xi)$.
Consider $\varphi: N \to M$ be any smooth map. Then the differential
$$
d\varphi: TN \to TM
$$
is transversal to $\Xi$ if and only if the function $TN \to \R$
defined by $(x,v) \mapsto \lambda_x(d\varphi(v))$ has 0 as a regular value.
\end{lem}

The following moduli space with 1-jet constraint will play a crucial role in our
proof of Weinstein' conjecture \cite{oh:weinstein-conjecture}.

\begin{defn}[Moduli space with $\Xi$-tangency condition]
We call  the 1-marked moduli space
$$
\CM_{(1,0)}^\Xi(\dot \Sigma, M;J)
=\{(w,z^+) \mid w \in \CM_{(0,1);[0,K_0]}(\dot \Sigma, M;J), \lambda(dw(z^+)) = 0, \, z^+ \in \dot \Sigma \}
$$
\emph{the moduli space with $\Xi$-tangency condition}.
\end{defn}

The union of  moduli spaces $\widetilde \CM_{(0,1)}^\Xi(\dot \Sigma, M)$ over $J \in=
\CJ_\lambda$ is nothing but
\be
\aleph_1^{-1}(o_{\CC\CD} \times \Xi)/\operatorname{Aut}(\Sigma)
\ee
where $o_{\CC\CD}$ is the zero
section of the bundle $\CC\CD$ defined above, and
$\operatorname{Aut}(\Sigma)$ acts on $((\Sigma,j),w)$ by conformal
reparameterization for any $j$.

The following is the main theorem of this section.

\begin{thm}[$1$-jet evaluation transversality]\label{thm:1-jet}
The derivative
$$
D(\ev^+):T \widetilde \CM_{(0,1)}(\dot \Sigma,M) \to \Lambda^1((\cdot)^*\Xi)
$$
is transverse to $\Lambda^1(w^*\Xi)$, where  $D_{{(J,(j,w),z)} }(\ev^+)$  is given by the linear map
$$
D_{{(J,(j,w),z)} }(\ev^+): T_{(J,(j,w),z)} \widetilde \CM_{(0,1)}(\dot \Sigma,M) \to \Lambda_z^1(w^*TM)
$$
at each $(J,(j,w),z) \in \widetilde \CM_{(0,1)}(\dot \Sigma,M)$.
\end{thm}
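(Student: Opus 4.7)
The plan is to enhance the canonical scheme behind Proposition \ref{prop:0-jet} by promoting $\aleph_0$ to a universal $1$-jet map
$$
\aleph_1 : \CJ_\lambda \times \widetilde \CF_{(0,1)}(\dot \Sigma,M) \times \CM_{\dot \Sigma} \longrightarrow \CC\CD \times TM, \quad (J,(j,w),z^+) \longmapsto \bigl(\Upsilon(J,(j,w)),\, dw(z^+)\bigr),
$$
with the $TM$-factor understood as the bundle over the varying foot-point $w(z^+)\in M$. By Lemma \ref{lem:lambda-transversality}, the required $1$-jet transversality against $\Xi$ is equivalent to transversality of $\aleph_1$ to $o_{\CC\CD}\times \Xi \subset \CC\CD\times TM$, and $\operatorname{Aut}(\Sigma)$-invariance descends this to the moduli description. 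The linearization of $\aleph_1$ contains the $\Upsilon$-linearization of Theorem \ref{thm:linearization} in its first coordinate and the expression \eqref{eq:Dev+}, $\nabla_{dw(z^+)(v)}X + \nabla_v dw$, in its second coordinate; these are the pieces on which every subsequent step acts.

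The second step is the Hahn--Banach/Fredholm reduction. Suppose $((\eta,f),\alpha_{z^+})\in \CC\CD^*\oplus T^*_{w(z^+)}M$ annihilates the image of $D\aleph_1$. Setting $L=0,\,v=0$ and varying $Y$ yields the distributional equation
$$
\bigl(D_w\Upsilon(J,(j,w))\bigr)^\dagger (\eta,f) \;=\; \text{(distribution supported at $z^+$)}
$$
arising from the pairing of $\alpha_{z^+}$ against the $1$-jet of $Y$. By the mixed-order principal symbol \eqref{eq:symbol} the adjoint is again elliptic, so $(\eta,f)$ is smooth on $\dot\Sigma\setminus\{z^+\}$. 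Setting $Y=0,\,v=0$ and varying $L\in T_J\CJ_\lambda$ reproduces $\langle L\cdot dw\circ j,\eta\rangle=0$, and the multiplicity-parity argument of Proposition \ref{prop:0-jet}, combined with Theorem \ref{thm:Reeb-chords} and the unique continuation Proposition \ref{prop:unique-conti}, forces $\eta\equiv 0$ on $\dot\Sigma\setminus\{z^+\}$; the coupled adjoint system of Proposition \ref{prop:L2-cokernel} then kills $f$ there as well, so $\supp(\eta,f)\subset \{z^+\}$. Lemma \ref{lem:gelfand} now gives that $(\eta,f)$ has finite order $N$ and expands as $\sum_{|\alpha|\le N} c_\alpha\, D^\alpha \delta_{z^+}$. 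The defining identity collapses to a finite-dimensional system coupling the $c_\alpha$, the jet $\nabla^k Y(z^+)$ for $k\le N$, and the pairing of $\nabla_{dw(z^+)(v)}X + \nabla_v dw$ with $\alpha_{z^+}$; standard cut-off constructions produce $Y\in \Omega^0_{2,p}(w^*TM)$ realising any prescribed jet at $z^+$, which eliminates the $c_\alpha$ in sequence, after which the freedom in $X$ and $v$ forces $\alpha_{z^+}=0$.

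The main obstacle, as the author himself emphasizes, is that $D\Upsilon$ is a mixed-order operator: the Reeb component $\lambda(Y)$ is governed by the second-order block $-\Delta(\lambda(Y))\,dA$ and is coupled to $Y^\pi$ through the off-diagonal $d((Y^\pi\rfloor d\lambda)\circ j)$ (see \eqref{eq:matrixDUpsilon}), so the Gelfand order $N$ in the decomposition above is no longer $0$ as in the $0$-jet setting but carries genuine first-order derivatives $D^\alpha\delta_{z^+}$. I would handle this by splitting $(\eta,f)$ according to $TM = \Xi\oplus \R\langle R_\lambda\rangle$ and exploiting the reformulation of Lemma \ref{lem:lambda-transversality}, which recasts the $\Xi$-tangency as the single scalar equation $\lambda\circ dw = 0$: only the Reeb component of $\alpha_{z^+}$ produces a nontrivial test, so the new first-order derivative terms survive only in the scalar Laplace block where prescribing the $1$-jet of $\lambda(Y)$ at $z^+$ by cut-off functions is manifestly surjective, and this suffices to close the Hahn--Banach argument.
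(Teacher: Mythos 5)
Your outline reproduces the paper's canonical scheme (a universal $1$-jet section, Hahn--Banach duality, localization of the dual pair $(\eta,f)$ to the marked point via the $L$-variation, unique continuation and the parity argument near punctures, then Lemma \ref{lem:gelfand}), but it diverges at exactly the point where the real work is done, and there the argument as written has a gap. First, in the paper's functional framework the dual element $\eta$ lies in $\left(\Omega^{(0,1)}_{1,p}(w^*\Xi)\right)^*\cong W^{-1,q}$, and since $W^{1,p}$ does not embed into $C^1$ no derivative of $\delta_{z_0}$ can occur: Lemma \ref{lem:gelfand} immediately forces $\eta=\beta_{z_0}\,\delta_{z_0}$ as in \eqref{eq:eta=adelta}, so no induction over the Gelfand order is needed and your premise that the order is ``genuinely first order'' in the $1$-jet setting is not how the argument runs (with the paper's choice of norms it is not even true). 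More seriously, your plan to ``eliminate the $c_\alpha$ in sequence, after which the freedom in $X$ and $v$ forces $\alpha_{z^+}=0$'' does not close at the lowest order: the order-zero coefficient $\beta_{z_0}$ and the $1$-jet covector $\alpha_{z^+}$ both pair against the same $1$-jet of the variation $Y$ at $z^+$ (through $\delbar^\pi Y^\pi(z^+)$, $d(\lambda(Y))(z^+)$ and the evaluation term), so pointwise linear algebra cannot separate them and a mutual cancellation is not excluded by anything you say. The paper resolves precisely this coupling by the cut-off/globalization step of Lemma \ref{lem:dJbar=dbar} and the proof of Lemma \ref{lem:eta=01jet}: subtracting $\chi(z)$ times the jet of $Y$ at $z_0$ removes the point contribution, shows $(\eta,f)$ is a \emph{global} weak solution of the adjoint system, hence smooth by ellipticity, hence zero because its support is a point; only afterwards does surjectivity of the jet evaluation kill the covector. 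You need this step, or an equivalent substitute, for your Hahn--Banach argument to conclude.

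Second, you never check that the $1$-jet component of your $\aleph_1$ is a well-defined and continuous (indeed differentiable) section on the Sobolev completion you implicitly work with; this is the content of Remark \ref{rem:3pversus2p} and is the genuinely new $1$-jet ingredient compared with Proposition \ref{prop:0-jet}. On $W^{2,p}$ the full linearization $D_{J,(j,w)}\del^\pi(L,(b,\xi))$ is only $L^p$, so its evaluation at $z_0$ is not defined; the paper restricts to $b=0$, $v=0$ so that by the explicit formula \eqref{eq:formula} only one derivative of $\xi$ enters and the evaluation becomes continuous on $W^{2,p}$. Relatedly, your universal map should take values in $\Lambda^1(w^*TM)|_{z^+}$, or better in the split data $(\del^\pi_{(j,J)}w(z), w^*\lambda(z))$ as in \eqref{eq:aleph1}, rather than in $TM$; the reduction via Lemma \ref{lem:lambda-transversality} to the scalar $\lambda(dw(\cdot))$ is legitimate and is indeed how the $\Xi$-tangency moduli space is used, but it does not remove the need for this regularity bookkeeping, which is what makes the whole duality argument meaningful on the chosen Banach spaces.
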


We define bundles
\beastar
J^1(\Sigma, M) & = & \bigcup_{(z,x) \in \dot \Sigma \times M}  \Hom(T_z \dot \Sigma , T_xM) \\
J^{\pi 1} (\Sigma, M) & = & \bigcup_{(z,x) \in \dot \Sigma \times M}  \Hom(T_z \dot \Sigma , \Xi_x)  \\
\Omega^i(\Sigma) & = & \bigcup_{z \in \Sigma} \Lambda^i(T_z \dot \Sigma ), \, i = 0, \, 1, \, 2.
\eeastar
Then we have the decomposition
\be\label{eq:dw-decomposition}
J^1(\Sigma, M) = J^{\pi 1}(\Sigma, M) \oplus \CL_\lambda
\ee
where $\CL_\lambda$ is the trivial line bundle given by
\be\label{eq:CLlambda}
\CL_\lambda = \R \langle R_\lambda \rangle.
\ee

When we have a global coordinate as in $\dot \Sigma = \R \times [0,1]$, 
we can also consider a more strict constraint asking the transversality
$$
dU(z) \left(\frac{\del}{\del t}\right)  \in \Xi
$$
and consider the associated moduli space 
\be\label{eq:MM-Xi-t}
\widetilde \CM_{(0,1)}^{\Xi,\del_t} (\R \times [0,1],  M) : = 
\left\{(U,z^+)\, \Big|\, z_+ = \left(\tau^+, \frac12\right), \,  dU(z^+)\left(\frac{\del}{\del t}\right) \in \Xi \right\}.
\ee
In fact this kind of moduli space for the \emph{perturbed contact instanton moduli space}
is used in our proof of Weinstein's conjecture in \cite{oh:weinstein-conjecture}.

\begin{rem}
Note that the virtual dimension of this moduli space has the same dimension as that of 
$\widetilde \CM (\R \times [0,1],  M)$ which is precisely the reason why such a moduli
space is introduced therein.  This may be compared with the real version of the \emph{divisor axiom}
in the Gromov-Witten theory \cite{konts-manin}, but in the one-jet level as in \cite{kim-kresch-oh}.
\end{rem}

\section{Generic 1-jet evaluation transversality: Proof}
\label{sec:1jet-proof}

In this subsection, we give the proof of Theorem \ref{thm:1-jet}.

\subsection{Geometric description of the off-shell framework}

We first provide some informal discussion to motivate the
necessary Fredholm set-up for the study of 1-jet property by adapting the discussion
from \cite{oh:higher-jet} for the case of pseudoholomorphic curves in 
symplectic geometry. The only difference therefrom is the replacement of $TM$ to
$\Xi$ and the addition of the Reeb component 
\be\label{eq:Reeb-component}
d(w^*\lambda \circ j) = 0.
\ee

 We will
provide the precise analytical framework in the end of this discussion.

We consider a triple $(J,(j,w),z)$ of compatible $J$
and $w : (\Sigma,j) \to (M,J)$ a $(j,J)$-holomorphic map and $z \in \Sigma$.
Define a map $\Upsilon$ by
\be
\aleph_1(J,(j,w),z) =\left (D\Upsilon(J,(j,w)),d(\w^*\lambda \circ j); w^*\lambda(z)\right).
\ee
We now identify the domain and the target of the map $\alpha_1$.
For any given $((j,J),z)$, consider the bundles over $\dot \Sigma \times M$
\beastar
\Hom^{\pi(0,1)}_{(j,J)}(T\dot \Sigma,\Xi)&: = & \bigcup_{(z,x)} \Hom^{\pi(0,1)}_{(j_z,J_x)}(T_z \dot \Sigma ,\Xi_x) \\
 \Hom^{\pi(1,0)}(T\dot \Sigma,\Xi_{(j,J)}) &: = & \bigcup_{(z,x)}
\Hom^{\pi(1,0)}_{(j_z,J_x)}(T_z \dot \Sigma ,\Xi_x),
\eeastar
where the above unions are
taken for all $(z,x)$ of $(\S\times M)$. At any $(z,x)$, the
fibers are the $(j,J)$-anti-linear and $(j,J)$-linear parts of
$\Hom(T_z \dot \Sigma ,\Xi_x)$, denoted by $\Hom^{\pi(0,1)}(T_z \S,\Xi_x)$ and
$\Hom^{\pi(1,0)}(T_z \S,\Xi_x)$ respectively. We recall
the decomposition
$$
\Lambda^1(T_x M) = \Lambda^1 (\Xi_x) \oplus \Lambda^1(\CL_{\lambda,x})
\cong \Lambda^1 (\Xi_x) \oplus (\Lambda^1(T\dot \Sigma) \otimes R_\lambda).
$$

For each given $(J,(j,w),z)$, we associate a $2n$-dimensional vector space
$$
 \Lambda_{(j,J)}^{(1,0)}(w^*\Xi)|_{z} =
\Lambda_{(j_z,J_{w(z)})}^{(1,0)}(\Xi_{w(z)})
$$
and define the vector bundle of rank $2n$
$$
 \Lambda^1((\cdot)^*\Xi) \to \CF_1^{\text{\rm univ}}(\dot \Sigma,M)
 $$
 whose fiber at 
 $$
 ((j,w),J,z) \in \CF_1^{\text{\rm univ}}(\dot \Sigma,M)
 $$
 is given by 
 \be\label{eq:Lambda1-Xi}
 \Lambda^1((\cdot)^*\Xi)= \bigcup_{(J,(j,w),z)}\Lambda_{(j,J)}^{(1,0)}(w^*\Xi)|_{z}.
\ee
We denote the corresponding universal moduli space of marked $J$ contact instantons
 $((\dot \Sigma,M),j),w,z)$ by 
 \be\label{eq:universal-CM1}
 \CM_1(\dot \Sigma,M) = \left\{ (J,(j,w),z) \in \CF_1^{\text{\rm univ}}(\dot \Sigma,M)\,
 \Big|\, \delbar^\pi_{(j,J)} w= 0, \, 
 d(w^*\lambda \circ j) = 0 \right\}.
 \ee
We introduce the  bundle over $\CF_1^{\text{\rm univ}}(\dot \Sigma,M)$
$$
\CH^{\pi(0,1)} = \bigcup_{(J,(j,w))} \CH^{\pi(0,1)}_{((j,w),J)}, \quad \CH^{\pi(0,1)}_{(J,(j,w))} = \Omega_{(j,J)}^{(0,1)}(w^*\Xi).
$$
Then we have the expression of 
$$
\aleph_1:  \CF_1^{\text{\rm univ}}(\dot \Sigma,M) \to \CH^{\pi(0,1)}\times
 \Lambda^1((\cdot)^*\Xi)
$$
given by 
\be\label{eq:aleph1}
\aleph_1(J,(j,w),z) = \left(\Upsilon(J,(j,w)), \;(\del_{(j,J)}^\pi w)(z), w^*\lambda(z) \right)
\ee
where $\CH^{\pi(0,1)}\times  \Lambda^1((\cdot)^*\Xi)$ is the fiber product
of the two bundles
$$
\CH^{\pi(0,1)} \to \CF_1^{\text{\rm univ}}(\dot \Sigma,M)
$$
and
$$
 \Lambda^1((\cdot)^*\Xi) 
\to \CF_1^{\text{\rm univ}}(\dot \Sigma,M).
$$
More explicitly we can express the fiber product as
\beastar
&{}& \CH^{\pi(0,1)}\times \Lambda^1((\cdot)^*\Xi)\\
& := & \left\{(\eta, \zeta_0; J,(j,w),z) \, \Big|\,
\eta \in \CH^{\pi(0,1)}_{(J,(j,w))}, \, \zeta_0 \in  \Lambda^1(w^*TM|_z) \right\}
\eeastar
We regard this fiber product as a vector bundle over $\CF_1^{\text{\rm univ}}(\dot \Sigma,M)$,
$$
(\eta, \zeta_0; J,(j,w),z) \mapsto (J,(j,w),z)
$$
whose fiber at $(J,(j,w),z)$ is given by
$$
\CH^{\pi(0,1)}_{(J,(j,w))} \times   \Lambda^1(w^*TM|_z).
$$
Then the above map $\aleph_1$ will become a smooth
\emph{section} of this  vector bundle.

The union of standard moduli spaces $\CM_1(\dot \Sigma, M)$ over $J \in \CJ_\lambda$
is nothing but
\be
\aleph_1^{-1}\left(o_{\CH^{\pi(0,1)}} \times  \Lambda^1((\cdot)^*\Xi)\right)/\operatorname{Aut}(\Sigma)
\ee
where $o_{\CH^{\pi(0,1)}}$ is the
zero section of the bundle $\CH^{\pi(0,1)}$ defined above, and
$\operatorname{Aut}(\Sigma)$ acts on $((\S,j),w)$ by conformal
equivalence for any $j$.  We also denote
 \beastar
\widetilde \CM_{(0,1)}^\Xi(\dot \Sigma, M) & = & \aleph_1^{-1}\left(o_{\CH^{\pi(0,1)}}\right)
\times \Lambda^1((\cdot)^*\Xi))\\
\widetilde \CM_{(0,1)}^{\Xi}(\dot \Sigma, M;J) & = & \widetilde \CM_{(0,1)}^\Xi(\dot \Sigma, M) \cap
\pi_2^{-1}(J). \eeastar

\subsection{Evaluation transversality and its off-shell function spaces}

 At each fixed $(J, (w,j),z_0)$ where we do linearization of $\aleph_1$,
we will write
\beastar
\Omega^0_{k,p}(w^*TM) & := & W^{k,p}(w^*TM) = T_w \CF^{k,p}(\dot \Sigma,M)\\
\Omega^{(0,1)}_{k-1,p}(w^*\Xi) & := & W^{k-1,p}\left(\Lambda_{(j,J)}^{(0,1)}(w^*TM)\right)
\eeastar
for the simplicity of notations. Let $o_{ \Lambda^1((\cdot)^*\Xi)}$ be
the zero section of $ \Lambda^1((\cdot)^*\Xi)$.
We now prove the following proposition by linearizing the section
$\aleph_1$.

\begin{prop}\label{prop:trans-aleph1} The section $\aleph_1$ is transverse to the subset

\be o_{\CH^{\pi(0,1)}}\times \Lambda^1((\cdot)^*\Xi)  \subset \CH^{\pi(0,1)} \times
\Lambda^1((\cdot)^*\Xi)).
\ee
In particular the set
$$
\aleph_1^{-1}\left(o_{\CH^{\pi(0,1)}} \times  \Lambda^1((\cdot)^*\Xi)\right)
$$
is a submanifold of $\widetilde \CM_1(\dot \Sigma, M)$ of codimension $2n$.
\end{prop}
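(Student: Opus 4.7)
The plan is to adapt the dualization scheme used in the proof of Proposition \ref{prop:0-jet} for $\aleph_0$, replacing $0$-jet data by $1$-jet data and accounting for the richer distribution-with-point-support analysis that this forces. First I would linearize $\aleph_1$ at an arbitrary $(J,(j,w),z_0)$ lying in the preimage. The contribution from $\Upsilon$ is already supplied by Theorem \ref{thm:linearization} together with Lemma \ref{lem:DY-univ}, giving the term $D\Upsilon(Y) + \frac{1}{2}L \circ d^\pi w \circ j$ as in \eqref{eq:formula}. The new contribution comes from the jet part $(\del^\pi_{(j,J)} w(z), w^*\lambda(z))$: its linearization at $(J,(j,w),z_0)$ in a direction $(L,Y,v)$ involves both $\nabla Y(z_0)$ (through the contact triad connection acting on $w$ at $z_0$) and the tangent vector $v \in T_{z_0}\dot\Sigma$ (through $\nabla_v dw$ and through $v \cdot w^*\lambda$), exactly in the form dictated by \eqref{eq:Dev+}. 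I would fix $j$, as in the $0$-jet proof, since domain complex-structure variations play no role in the transversality count.

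The main step is then a density/Hahn-Banach argument: assume a functional $(\eta,\omega,\alpha,\mu)$ annihilates the image of $D\aleph_1$, where $(\alpha,\mu)$ is dual to the finite-dimensional jet target $\Lambda^{(1,0)}_{(j,J)}(w^*\Xi)|_{z_0} \oplus T^*_{z_0}\dot\Sigma$. Plugging in $(L,Y,v) = (0,Y,0)$ with $Y \in C^\infty_c(\dot\Sigma \setminus \{z_0\})$ and using the explicit $L^2$-adjoint formula from Proposition \ref{prop:L2-cokernel} shows $(D\Upsilon)^\dagger(\eta,\omega) = 0$ off $z_0$, so by ellipticity $(\eta,\omega)$ is a classical solution on $\dot\Sigma\setminus\{z_0\}$. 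Varying $L \in T_J\CJ_\lambda$ and using the surjectivity of $L \mapsto L\circ d^\pi w \circ j$ at any point where $d^\pi w \neq 0$ forces $\eta$ to vanish on a dense open subset of $\dot\Sigma\setminus\{z_0\}$; the unique-continuation Proposition \ref{prop:unique-conti}, together with the non-looping asymptotic behavior guaranteed by Theorem \ref{thm:Reeb-chords}(3) and the strip-end decomposition used in the proof of Proposition \ref{prop:0jet-dense}, then propagates this to $\eta \equiv 0$ on all of $\dot\Sigma\setminus\{z_0\}$; by \eqref{eq:adjoint-g} the same holds for $\omega$.

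At this point $(\eta,\omega)$ has point support at $\{z_0\}$. Because the linearization $D\aleph_1$ pairs $Y$ with the jet data through at most first derivatives of $Y$ at $z_0$, the adjoint source at $z_0$ is a distribution of order $\leq 1$, and Lemma \ref{lem:gelfand} writes $\eta = \sum_{|\beta|\leq N} c_\beta\, \partial^\beta \delta_{z_0}$ for some finite $N$ and similarly for $\omega$. Testing against $Y$'s with $Y(z_0) = 0$ and arbitrary $\nabla Y(z_0)$, and exploiting the explicit form \eqref{eq:DUpsilon1}--\eqref{eq:DUpsilon2} together with the principal-symbol description in Proposition \ref{prop:closed-fredholm}(3), one reads off that every higher-order $\partial^\beta \delta_{z_0}$ coefficient must vanish; then testing with $Y$'s having prescribed $Y(z_0)$ kills the remaining $\delta_{z_0}$ coefficient. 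The annihilation condition thus collapses to a purely finite-dimensional equation on $(\alpha,\mu)$, and $(\alpha,\mu) = 0$ follows from the obvious surjectivity of the linear map $(Y,v) \mapsto (\nabla Y(z_0) + \nabla_{dw(z_0)(v)} Y, dw(z_0) v)$ once $Y(z_0)$, $\nabla Y(z_0)$, and $v$ are allowed to vary freely.

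The hard part will be the coefficient extraction in the last paragraph. Because the contact instanton system is of mixed order (first-order $D\Upsilon_1$ and second-order $D\Upsilon_2$, cf. Proposition \ref{prop:closed-fredholm}(3)), the regularity bookkeeping for a distribution supported at a point is slightly more delicate than in the pseudoholomorphic-curve case of \cite{oh-zhu:ajm,oh:higher-jet}; one needs to decompose $Y = Y^\pi + f R_\lambda$ and track the order of the delta distributions produced in each summand separately, because $D\Upsilon_1^2$ and $D\Upsilon_2^1$ contribute off-diagonally (these are precisely the terms treated as compact perturbations in Proposition \ref{prop:closed-fredholm}(1)). Once this is handled, the codimension count matches $\dim_\R \Lambda^{(1,0)}_{(j,J)}(w^*\Xi)|_{z_0} = 2n$ as claimed, and the submanifold structure follows from the implicit function theorem and the Fredholm property of $D\Upsilon$ from Proposition \ref{prop:fredholm}.
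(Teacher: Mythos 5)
Your setup and the middle of the argument track the paper's scheme faithfully: linearize $\aleph_1$ with $j$ frozen, dualize via Hahn--Banach, kill $(\eta,\omega)$ on $\dot\Sigma\setminus\{z_0\}$ by varying $L$, unique continuation, and the Reeb-chord genericity of Theorem \ref{thm:Reeb-chords}(3) to bypass somewhere injectivity. The genuine gap is in the point-support step, which is where all the new content of the $1$-jet case lives. First, your bound on the order of the annihilating functional is unjustified: the order of $\eta$ is controlled by which dual Banach space it lies in, not by the fact that the jet term of $D\aleph_1$ involves only first derivatives of $Y$. The paper's mechanism is precisely to arrange this functional-analytic control: restricting to $b=0$, $v=0$ so that the $1$-jet evaluation \eqref{eq:W2ptoTM} involves only \emph{one} derivative of $\xi$ (Remark \ref{rem:3pversus2p}, via the explicit formula \eqref{eq:formula}), one may take the codomain with its $W^{1,p}$-norm, so that $\eta\in (W^{1,p})^*\cong W^{-1,q}$ and Lemma \ref{lem:gelfand} forces the order to be \emph{zero}, i.e.\ \eqref{eq:eta=adelta}, with no derivative-of-delta terms at all. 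Without this reduction your induction has no a priori starting order, and your stated tests cannot run it: a coefficient on $\partial^\beta\delta_{z_0}$ with $|\beta|\ge 1$ pairs with $2$-jets of $Y^\pi$ (and, through $\Delta$, with $3$-jets of $\lambda(Y)$), so test sections with only $Y(z_0)=0$ and arbitrary $\nabla Y(z_0)$ simply do not see those coefficients.

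Second, your last extraction step is wrong as stated: the order-zero coefficient of $\eta$ pairs with $\delbar Y^\pi(z_0)$, i.e.\ with the \emph{first} derivatives of $Y$ at $z_0$, which are exactly the jets through which the dual of the $1$-jet evaluation also acts; prescribing $Y(z_0)$ therefore does not touch it, and at the first-jet level the unknowns $(\eta,\alpha)$ mix, so they cannot be killed one after the other by jet order alone (one would need, e.g., the independence of the $(0,1)$- and $(1,0)$-parts of $\nabla Y(z_0)$, which you never invoke). The paper avoids this mixing entirely: after the order-zero reduction it uses Lemma \ref{lem:dJbar=dbar} (the correction terms satisfy $E(z_0)=F(z_0)=0$, so the pairing reduces to the flat $\delbar$ and $\Delta$) and then the cutoff decomposition $Y = (Y-\chi Y(z_0)) + \chi Y(z_0)$ to show that $(\eta,f)$ is a distributional solution of the adjoint equation on \emph{all} of $\Sigma$; elliptic regularity then makes it continuous, and continuity together with its vanishing off $z_0$ gives $(\eta,f)=0$ (Lemma \ref{lem:eta=01jet}), after which the finite-dimensional part is immediate. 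Your proposal is missing both of these ingredients, and the "hard part" you defer is precisely the place where the sketched tests fail.
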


Recall that the subset
\be\label{eq:oCH''}%
o_{\CH^{\pi(0,1)}} \times o_{ \Lambda^1(\cdot)^*\Xi}\subset o_{\CH^{\pi(0,1)}}\times
 \Lambda^1((\cdot)^*\Xi)\ee%
is a submanifold of codimension $2n = \rank \Xi$.

So it is easy to check the statement on the codimension once we
prove $\aleph_1$ is transverse to the submanifold 
$$
 o_{\CH^{\pi(0,1)}}
\times o_{(\cdot)^*\lambda}\subset \CH^{\pi(0,1)} \times  \Lambda^1((\cdot)^*\Xi).
$$
Let
$$
(J,(j,w),z) \in \aleph_1^{-1}(o_{\CH^{\pi(0,1)}} \times o_{H^{(1,0}}).
$$
The linearization of $\aleph_1$ at $(J,(j,w),z)$
$$
D_{(J,(j,w),z)}\aleph_1 : T_J\CJ_\lambda \times T_{((j,w),z)}\CF_1^{k,p}(\dot \Sigma,M) \to
\CH^{\pi(0,1)}_{(J,(j,w))} \times  \Lambda^1(w^*TM|_z)
$$
is given as follows. We decompose $Y = Y^\pi + \lambda(Y) R_\lambda$.
Then 
\be\label{eq:DUpsilon}%
(L, (b,Y),v) \mapsto
\Big(D_{J,(j,w)}\Upsilon(L, (b,Y)),\;D_{J,(j,w)}\del^\pi(L, (b,Y))(z)
+ \nabla_v (\del^\pi_{(j,J)}w)(z) \Big)
\ee%
for 
$$
B\in T_J\CJ_{\o}, \, b\in T_j\CM(\S), \, v\in T_z\S, \, Y \in \Omega^0_{k,p}(w^*TM).
$$
Recall that $w$ is in $W^{k,p}$ with $k \geq 3$. in fact, $w$ is smooth by
elliptic regularity since  $\Upsilon((j,J),w) = 0$ which is equivalent to
$$
\delbar^\pi w = 0, \quad d(w^*\lambda \circ j) = 0.
$$
Furthermore
$D_{J,(j,w)}\del^\pi((L, (b,Y))$ and $\nabla_v (\del^\pi_{(j,J)}w)$ are in
$W^{k-2,p}$ where $k-2 \geq1$. Therefore their evaluations at $z$
are well-defined.

\subsection{Proof of the evaluation transversality against $\Xi$}

Now we are ready to give the proof of Theorem \ref{thm:0jet-evaluation}.
The present subsection will be occupied by the proof thereof.

Let $((\eta, f),X_p) \in \CB^* \times T_pM$ satisfy
\be\label{eq:0}
\left\langle D_w\Upsilon(J,(j,w))Y + \left(\frac{1}{2}L \cdot d^\pi w \circ j,0\right),(\eta,f) \right\rangle
+ \langle Y, \delta_{z_0} X_p \rangle = 0
\ee
for all $Y \in \Omega^0_{2,p}(w^*TM)$ and $L$ where $\delta_{z_0}$ is the
Dirac-delta function supported at $z_0$.
By the Hahn-Banach lemma, it will be
enough to prove
\be\label{eq:vanishing}
(\eta,f) = 0 = X_p.
\ee
In the derivation of \eqref{eq:0},  we have used the formula
$$
\Upsilon(J,(j,w)) = \left(\delbar^\pi_J w, d(w^*\circ \lambda \circ j)\right)
$$
to compute the linearization of $\aleph_1$. In particular, we have
$$
D_1\aleph_1(L)) = \left(\frac{1}{2}L \cdot d^\pi w \circ j,0\right)
$$
in the direction of $J$ where the second factor of the value of $\aleph_1$
does not depend on $J$.  Obviously, we have
$$
D_2\aleph_1(Y) = D\Upsilon(Y).
$$

Without loss of any generality, we
may assume that $Y$ is smooth since the embedding
$$
C^\infty(w^*TM) \hookrightarrow \Omega^0_{2,p}(w^*TM)
$$
has a dense image.
Taking $L=0$ in \eqref{eq:0}, we obtain
\be \label{eq:coker} %
\langle D_w\Upsilon(J,(j,w)) Y, (\eta,f) \rangle + \langle  Y,
\delta_{z_0} X_p \rangle = 0 \quad \mbox{ for all $Y$
of $C^{\infty}$ }.
\ee %
Therefore by definition of the distribution derivatives, $\eta$ satisfies
$$
(D_w\Upsilon(J,(j,w)))^\dagger (\eta,f) - \delta_{z_0} X_p = 0
$$
as a distribution, i.e.,
$$
(D_w\Upsilon(J,(j,w)))^\dagger (\eta,f) = \delta_{z_0} X_p
$$
where $(D_w \Upsilon(J,(j,w)))^\dagger$ is the formal adjoint of $D_w
\Upsilon(J,(j,w))$ whose symbol is of the same type as $D_w \Upsilon_{(j,J)}$ and so is
an elliptic first order differential operator. (See \eqref{eq:matrixDUpsilon} for the linearization formula and
recall that $(\delbar_J^\pi)^\dagger = - \del_J^\pi$ modulo zero order operators.)
By the elliptic regularity, $(\eta,f)$ is a classical solution on $\Sigma \setminus \{z_0\}$.

On the other hand, by setting $Y = 0$ in \eqref{eq:0},  we get
\be
\label{eq:1/2L} \langle L\cdot dw\circ j, (\eta,f) \rangle=0
\ee
for all $L\in T_{J} \CJ_\lambda$. From this identity, the
argument used in the transversality proven in the previous section
shows that $\eta=0$ in a small neighborhood of any somewhere injective point in $\Sigma
\setminus \{\chi^+
\}$. Such a somewhere injective point exists by the
hypothesis of $w$ being somewhere injective and the fact that the
set of somewhere injective points is open and dense in the domain
under the given hypothesis. Then by the unique
continuation theorem, we conclude that $\eta = 0$ on $ \Sigma
\backslash\{z_0\}$ and so the support of $\eta$ as a distribution
on $\Sigma$ is contained at the one-point subset $\{z_0\}$ of $\Sigma$.

The following lemma will conclude the proof of Proposition \ref{prop:trans-aleph1}.
We postpone the proof of the lemma till the last section of the paper.

\begin{lem} \label{lem:eta=01jet}  $\eta$ is a distributional solution of
$(D_w \Upsilon(J,(j,w)))^\dagger (\eta,\omega) = 0$ on $\Sigma$
and so continuous.
In particular, we have $(\eta,\omega) = 0$ in $(\CC\CD)^*$.
\end{lem}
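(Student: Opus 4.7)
The setup entering the lemma is as follows: from the earlier part of the proof of Proposition \ref{prop:trans-aleph1}, the pair $(\eta,\omega) \in (\CC\CD)^*$ has support contained in the single point $\{z_0\}$ (unique continuation has produced the pointwise vanishing on $\Sigma\setminus\{z_0\}$), and it satisfies on $\Sigma$ a distributional identity of the form $(D_w\Upsilon(J,(j,w)))^\dagger(\eta,\omega) = S$, where $S$ is a finite-order distribution also supported at $\{z_0\}$ encoding the $1$-jet evaluation pairings with the dual parameters $X_p$ and $\zeta_0$. The heart of the lemma is to upgrade this to $S=0$, i.e.\ to the statement that $(\eta,\omega)$ satisfies the \emph{homogeneous} adjoint equation on all of $\Sigma$; elliptic regularity then makes $(\eta,\omega)$ smooth (in particular continuous), and continuity combined with the pointwise vanishing on the dense open $\Sigma\setminus\{z_0\}$ forces $(\eta,\omega)\equiv 0$ in $(\CC\CD)^*$.

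The plan is to invoke Lemma \ref{lem:gelfand} and expand, in local isothermal coordinates $(x,y)$ centered at $z_0$,
$$
\eta = \sum_{|\alpha|\le N} c^\eta_\alpha\, D^\alpha\delta_{z_0}, \qquad \omega = \sum_{|\beta|\le N} c^\omega_\beta\, D^\beta\delta_{z_0},
$$
with coefficients in the appropriate dual fibers at $w(z_0)$, and to expand $S$ analogously as a finite linear combination of derivatives of $\delta_{z_0}$ whose coefficients are explicit linear functionals of $X_p$ and $\zeta_0$. Substituting these expansions into the distributional equation, and using the principal-symbol description of $D_w\Upsilon$ (hence of its adjoint) supplied by Proposition \ref{prop:closed-fredholm} and formula \eqref{eq:symbol}, reduces the whole problem to a finite-dimensional linear system in the unknowns $c^\eta_\alpha, c^\omega_\beta, X_p, \zeta_0$, graded by the order of point-supported distribution.

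The main step is an order-by-order triangular descent. Because the $\Xi$-block of $(D_w\Upsilon)^\dagger$ is first-order of Cauchy--Riemann type while its $R_\lambda$-block is a second-order Laplacian, applying the operator to a term $c^\eta_\alpha D^\alpha\delta_{z_0}$ produces a leading contribution of order $|\alpha|+1$ whose top coefficient is the image of $c^\eta_\alpha$ under the invertible principal symbol of the $\Xi$-block, plus strictly lower-order tails; similarly a term $c^\omega_\beta D^\beta\delta_{z_0}$ contributes a leading piece of order $|\beta|+2$. The off-diagonal entries of \eqref{eq:matrixDUpsilon}, being zero-order, feed only into the sub-leading tails. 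Matching coefficients at the top order present on the two sides and invoking invertibility of the principal symbol forces the leading $c^\eta_\alpha$ and $c^\omega_\beta$ to vanish, strictly lowering the order of $(\eta,\omega)$; iterating downward, every coefficient vanishes, and the residual bottom-order equation then forces the free parameters $X_p$ and $\zeta_0$ entering $S$ to vanish as well. Thus $S=0$, the homogeneous equation $(D_w\Upsilon)^\dagger(\eta,\omega)=0$ holds on all of $\Sigma$, and continuity plus the prior vanishing give $(\eta,\omega)=0$.

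The main obstacle will be the bookkeeping of this triangular descent. Because the two components of the target have different orders (first-order on $\Xi$, second-order on $R_\lambda$), contributions such as $c^\eta_\alpha$ with $|\alpha|=N$ and $c^\omega_\beta$ with $|\beta|=N-1$ land at the same total order and must be disentangled, and one has to track how the zero-order coupling blocks $B^{(0,1)}$, $T^{\pi,(0,1)}_{dw}$, $\tfrac12\lambda(\cdot)(\CL_{R_\lambda}J)J\,\del^\pi w$ and $d((\cdot)\rfloor d\lambda\circ j)$ of \eqref{eq:matrixDUpsilon} mix the two blocks in the sub-leading equations. These issues are combinatorial rather than analytic, and are handled as in the pseudoholomorphic-curve analogue carried out in \cite{oh-zhu:ajm} and \cite{oh:higher-jet}.
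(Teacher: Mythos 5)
Your overall strategy (Lemma \ref{lem:gelfand} expansion at $z_0$ plus order-by-order matching against the frozen symbol) is a genuinely different route from the paper's, and for the part of the source of order zero it can be made to work; but as written there is a real gap at the bottom of your descent, which is exactly where the content of the lemma sits. The inductive step ``top order on the left exceeds anything on the right, and the frozen principal symbol is injective on vector-valued polynomials, so the leading coefficients die'' is fine as long as the order under consideration is strictly larger than the order of $S$. It breaks down at the orders where $S$ itself lives. You explicitly put the $1$-jet dual parameter $\zeta_0$ into $S$, so $S$ contains first derivatives of $\delta_{z_0}$; at that order the matching does not read ``(invertible symbol)$\,\cdot\,$(coefficient)$\,=0$'' but rather ``$\sigma_{\delbar^\dagger}(z_0,i\xi)\,c^{\eta}_0 = L_{\zeta_0}(\xi)$'', a linear identity between the lowest surviving coefficient of $\eta$ and the source datum. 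Invertibility of the principal symbol for each fixed $\xi\neq 0$ gives you nothing here: you must show that the image of the constant vectors under multiplication by the adjoint $\partial$-type symbol meets the image of the $\zeta_0$-term (which comes from the adjoint of the $\del^\pi$-evaluation, hence carries the conjugate, $\delbar$-type symbol) only in $0$ --- e.g.\ by a coprimality argument for $\xi_1-i\xi_2$ and $\xi_1+i\xi_2$ --- and the analogous statement for the mixed terms fed in by the off-diagonal blocks of \eqref{eq:matrixDUpsilon}. None of this is supplied, and it is not ``combinatorial bookkeeping handled in the references'': if the bottom-order matching were automatic from ellipticity, $1$-jet evaluation transversality would follow with no geometric input at that stage, which is not the case. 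A secondary, smaller imprecision: what you need at each stage is injectivity of multiplication by the frozen homogeneous symbol on polynomial vectors (i.e.\ $\det\sigma\not\equiv 0$), not pointwise invertibility; for the $\delbar$- and $\Delta$-blocks this is true but should be said.

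For comparison, the paper never runs a descent and never confronts the bottom-order interaction. It first uses the functional-analytic constraint you do not invoke --- $\eta\in(W^{1,p})^*\cong W^{-1,q}$ --- to conclude from Lemma \ref{lem:gelfand} that the point-supported distribution has order zero, $\eta=\beta_{z_0}\delta_{z_0}$ as in \eqref{eq:eta=adelta}; then Lemma \ref{lem:dJbar=dbar} kills the variable-coefficient lower-order terms because their coefficients vanish at $z_0$; then the cutoff trick $\widetilde Y^\pi = Y^\pi - \chi\,Y^\pi(z_0)$ (which vanishes at $z_0$ and has the same $\delbar$ near $z_0$) makes the $\delta_{z_0}X_p$-term drop out of \eqref{eq:coker-append}, yielding $\langle\delbar^\pi Y^\pi,\eta\rangle=0$ and $\int \lambda(Y)\,\Delta f\,dA=0$ for \emph{all} test data, i.e.\ the homogeneous adjoint equation on all of $\Sigma$; elliptic regularity then gives continuity, and continuity plus the already-established vanishing on $\Sigma\setminus\{z_0\}$ gives $(\eta,\omega)=0$. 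If you want to keep your descent, you must either restrict $S$ to the order-zero part (as the paper's proof of this lemma effectively does, treating the jet datum separately) or carry out the bottom-order coprimality argument explicitly; otherwise the step ``the residual bottom-order equation forces $X_p$ and $\zeta_0$ to vanish'' is unproved.
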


Once we know $(\eta,\omega) = 0$, the equation \eqref{eq:0} is reduced to
the finite dimensional equation
\be\label{eq:simple0}
(Y(z_0),X_p)_{z_0} = 0
\ee
It remains to show that $X_p = 0$. For this, we have only to
show that the image of the evaluation map
$$
Y \mapsto  Y(z_0)
$$
is surjective onto $T_{p}M$, which is now obvious.

We need to prove that at each 
$$
(J,(j,w),z_0) \in
\aleph_1^{-1}\left(o_{\CH^{\pi(0,1)}} \times o_{ \Lambda^1((\cdot)^*\Xi}\right),
$$
the system of
equations, for $Y = \xi +f R_\lambda$ with $Y^\pi =: \xi$ and $\lambda(Y) = f$,
\bea%
D_{J,(j,w)}\delbar^\pi((L, (b,\xi)) &=& \gamma \label{eq:Dbar}\\
D_{J,(j,w)}\del^\pi((L, (b,\xi))(z_0)
+ \nabla_v (\del^\pi_{(j,J)}w)) & = & \zeta_0
\label{eq:DJueta-}
\eea%
has a solution $((L, (b,\xi),v)$ for each given data
$$
\gamma \in \Omega^{(0,1)}_{k-1,p}(w^*\Xi), \quad \zeta_0 \in  \Lambda^1(w^*\Xi)_{z_0}.
$$
It will be enough to consider the triple with $b = 0$ and $v=0$ which we will assume
from now on.

Now we study solvability of \eqref{eq:Dbar}-\eqref{eq:DJueta-} by applying the
Fredholm alternative. For this purpose, we make the following crucial remark

\begin{rem} \label{rem:3pversus2p}
We emphasize  that for the map
$$
(z_0,v) \mapsto D_{J,(j,w)}\del^\pi \left((L, (b,\xi))(z_0)
+ \nabla_v \left(\del^\pi_{(j,J)}w\right)\right)
$$
to be defined as a continuous map to 
$ \Lambda^1(w^*\Xi)_{z_0}$, the map $w$ must
be at least $W^{2 +\epsilon, p}$ for $\e > 0$ : On $W^{2,p}$, the map
$D_{J,(j,w)}\del^\pi(L, (b,\xi))$ will be only in $L^p$ for which
the evaluation at a point is not defined in general, let alone being continuous.
However, the evaluation map
\be\label{eq:W2ptoTM}
z_0 \mapsto D_{J,(j,w)}\del^\pi(L,(0,\xi))(z_0)
\ee
is well-defined and continuous on $W^{2,p}$ as shown by the explicit formula
\eqref{eq:formula}, which involves only \emph{one} derivative of the section $\xi$.
This reduction from $W^{k,p}$ to $W^{2,p}$ of the regularity requirement
in the study of the map \eqref{eq:W2ptoTM}, which can be achieved after restricting to $b=0$, $v = 0$,
will play a crucial role in our proof. See the proof of Lemma \ref{lem:eta=0}.
\end{rem}

Utilizing this remark, we will first show that the image of the map
\eqref{eq:DUpsilon} restricted to the elements of the form
$(L,(0,\xi),0)$ is onto as a map
$$
T_J \CJ_\lambda \times \Omega^0_{2,p}(w^*TM) \to (\CD^{1,p})_{(J,(j,w))}  \times
  \Lambda^1(w^*\Xi)_{z_0}
$$
where $(w,j,z_0,J)$ lies in $o_{\CH^{(0,1)}}\times o_{ \Lambda^1((\cdot)^*\Xi)}$. In the end of
the proof, we will establish solvability of \eqref{eq:Dbar}-\eqref{eq:DJueta-}
on $W^{k,p}$ for $\gamma \in W^{k-1,p}$ by applying an elliptic regularity result
of the map \eqref{eq:Dbar}.

We regard
$$
\Omega^{(0,1)}_{1,p}(w^*\Xi) \times   \Lambda^1(w^*\Xi)_{z_0}: = \CB
$$
as a Banach space with the norm
$$
\|\cdot\|_{1,p} + |\cdot|
$$
where $|\cdot|$ any norm induced by an inner product on
$$
H^{(1,0
)}_{(J,(j,w),z_0)}=\Lambda^{(1,0)}_{(j,J)}(w^*\Xi)_{z_0} \cong \C^n.
$$
For the clarification of notations, we denote the natural pairing
$$
\Omega^{(0,1)}_{1,p}(w^*\Xi)\times \left(\Omega^{(0,1)}_{1,p}(w^*\Xi)\right)^*\to \R
$$
by $\langle \cdot, \cdot
\rangle$ and the inner product on $ \Lambda^1((\cdot)^*\Xi_{(J,(j,w),z_0)}$ by $(\cdot,
\cdot)_{z_0}$.

Finally we have the natural projection
$$
\Pi:\widetilde \CM_1(\dot \Sigma, M) : = \bigcup_{J \in \CJ_\lambda}
\widetilde \CM_1(\dot \Sigma, M) \to \CJ_\lambda.
$$
The projection has index $2(c_1(\beta) + n(1-g)) + 2$, so for any
regular value $J$. 

$\Aut(\Sigma)$ acts on marked Riemann surfaces $((\S,j),z)$ by
conformal equivalence then on the maps from them.
We define the moduli space
$$
\CM_1^{\text{\rm crit}}(M,J;\b):=\widetilde\CM_1^{\text{\rm crit}}(M,J;\b)/\Aut(\Sigma),
$$
where 
$\CM_1^{\text{\rm crit}}(M,J;\b)$ consists of  $J$ contact instantons in class
$\b$ with at least one critical point. As a smooth orbifold, we have
$$
\text{dim }\CM_1^{\text{\rm crit}}(M,J;\b)=2 \big(c_1(\b)+(3-n)(g-1)+1-n\big)
$$
for all $g$ as stated in the introduction.

Finally we just set
$$
\CJ_\lambda^{\Xi} := \mbox{the set of regular values of $\Pi$}
$$
which finishes the proof of the following theorem.

\begin{thm} \label{thm:immersion} There exists a subset $\CJ_\lambda^{\Xi} \subset
\CJ_\lambda$ of second category such that for $J \in
\CJ_\lambda^{\Xi}$ all $J$ contact instanton
$w:\Sigma \to M$ are immersed for any $j \in \CM(\Sigma)$, provided
\be\label{eq:immersed} c_1(\beta) + (3-n)(g-1)< n -1 \ee
\end{thm}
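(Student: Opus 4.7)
The plan is to realize the non-immersion locus of $J$-contact instantons as the zero set of a natural finite-rank bundle section over the universal moduli space, and then close the proof by a purely numerical argument: if the expected dimension is negative, the locus must be empty, which is exactly what the hypothesis $c_1(\beta) + (3-n)(g-1) < n-1$ arranges.

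First I will reduce ``$w$ fails to be immersed at $z$'' to the vanishing of the jet components of $\aleph_1$. Because $\delbar^\pi w = 0$, the map $d^\pi w(z)$ is $(j,J)$-linear, so if $dw(z)(v) = 0$ for some nonzero $v \in T_z\dot \Sigma$ then necessarily $\del^\pi w(z) = 0$, and the remaining Reeb component forces $w^*\lambda(z) = 0$ as well, once one accounts for the splitting $TM = \Xi \oplus \R\langle R_\lambda\rangle$. Thus the critical locus $\{(w,z) : dw(z) = 0\}$ is cut out exactly by the two evaluation components already packaged into
$\aleph_1(J,(j,w),z) = (\Upsilon(J,(j,w)),\ \del^\pi_{(j,J)} w(z),\ w^*\lambda(z))$.

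Next I will invoke Proposition \ref{prop:trans-aleph1}: $\aleph_1$ is transverse to the product zero section in $\CH^{\pi(0,1)} \oplus \Lambda^1((\cdot)^*\Xi)$, so the universal moduli space of triples $(J,(j,w),z)$ solving $\aleph_1 = 0$ is a Banach manifold on which the projection $\Pi:\widetilde \CM_1^{\text{\rm crit}}(M;\beta) \to \CJ_\lambda^\ell$ is Fredholm. Then I will run the standard Sard--Smale argument, together with the Taubes diagonal trick to pass from $C^\ell$ back to $C^\infty$, exactly as in Theorem \ref{thm:trans}, to produce a residual subset $\CJ_\lambda^\Xi \subset \CJ_\lambda$ over which the fiber is a smooth manifold. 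An index computation, subtracting the finite codimension of the combined $1$-jet cut from the usual moduli index and further subtracting $\dim \Aut(\Sigma)$, yields the dimension $2\bigl(c_1(\beta) + (3-n)(g-1) + 1 - n\bigr)$ asserted in the introduction for $\CM_1^{\text{\rm crit}}(M,J;\beta)$.

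Finally, under $c_1(\beta)+(3-n)(g-1) < n-1$ this virtual dimension is strictly negative, so $\CM_1^{\text{\rm crit}}(M,J;\beta) = \emptyset$ for each $J \in \CJ_\lambda^\Xi$, which is the desired conclusion. The step I expect to be the main obstacle is the index/codimension bookkeeping: Theorem \ref{thm:1-jet} as stated only controls the scalar tangency $\lambda(dw(z)) = 0$ (codimension $2$), whereas the non-immersion criterion requires the stronger vanishing $\del^\pi w(z) = 0$ (codimension $2n$), and one must carefully stack these constraints. Reconciling this with the genus-dependent dimension of $\Aut(\Sigma)$ to recover exactly $1-n$ in the formula, while keeping the surjectivity argument of Proposition \ref{prop:trans-aleph1} valid (which in turn rests on Lemma \ref{lem:eta=01jet} and the sharp regularity observation of Remark \ref{rem:3pversus2p}), is where the real care will be needed.
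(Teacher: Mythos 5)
Your overall skeleton --- realize the bad locus as the zero set of the one-jet section $\aleph_1$, apply Proposition \ref{prop:trans-aleph1} and Sard--Smale to the projection $\Pi$ to $\CJ_\lambda$, and conclude emptiness from negativity of the expected dimension $2\big(c_1(\beta)+(3-n)(g-1)+1-n\big)$ --- is exactly the paper's argument. However, your opening reduction contains a genuine error, and it propagates into the codimension bookkeeping that you yourself flag as the delicate point. Non-immersion of $w$ at $z$ only says $\ker dw(z)\neq 0$, i.e.\ there is one nonzero $v$ with $d^\pi w(z)v=0$ and $(w^*\lambda)_z(v)=0$. Complex linearity of $d^\pi w(z)$ (from $\delbar^\pi w=0$) does give $\del^\pi w(z)=0$, but it does \emph{not} force $(w^*\lambda)_z=0$: the form $w^*\lambda$ is only seen to vanish on the single kernel direction. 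A cylinder over a closed Reeb orbit, $w(\tau,t)=\gamma(Tt)$, is a contact instanton with $d^\pi w\equiv 0$, nowhere an immersion, and $w^*\lambda=T\,dt$ nowhere zero, so the implication you assert fails. The correct pointwise statement is the equivalence: $z$ is a non-immersion point if and only if $\del^\pi w(z)=0$, because if $d^\pi w(z)\neq 0$ it is injective by complex linearity and hence so is $dw(z)$, while if $d^\pi w(z)=0$ then $dw(z)=(w^*\lambda)_z\otimes R_\lambda$ has rank at most one.

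Consequently there is no ``stacking'' of a codimension-two scalar constraint on top of the codimension-$2n$ one: the non-immersion locus is cut out by the single $\Lambda^{(1,0)}(w^*\Xi)$-valued condition $\del^\pi w(z)=0$, which is precisely the codimension $2n$ of Proposition \ref{prop:trans-aleph1}, and together with the $+2$ from the marked point this is what yields $2\big(c_1(\beta)+(3-n)(g-1)+1-n\big)$. If you instead cut by both $\del^\pi w(z)=0$ and $w^*\lambda(z)=0$, two things go wrong: the expected dimension drops by a further $2$, so it no longer matches the stated formula; and, more seriously, emptiness of that smaller locus only excludes points with $dw(z)=0$ entirely, leaving open non-immersion points with $\del^\pi w(z)=0$ but $w^*\lambda(z)\neq 0$, so the theorem would not follow. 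The scalar tangency $\lambda(dw(z))=0$ of Theorem \ref{thm:1-jet} belongs to the separate $\Xi$-tangency moduli space and plays no role in the immersion statement. With the reduction corrected in this way, the remainder of your argument (transversality of $\aleph_1$, Sard--Smale with the $C^\ell$-to-$C^\infty$ passage, negative dimension implies empty) coincides with the paper's proof.
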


Now it remains to prove Lemma \ref{lem:eta=0} which is postponed.

\section{Proof of Lemma \ref{lem:eta=01jet}.}

Our primary goal is to prove
\be\label{eq:tildexi} \langle D_w\Upsilon(J,(j,w)) Y, (\eta,\omega) \rangle = 0
\ee
for all smooth $Y \in \Omega^0(w^*TM)$, i.e., $\eta$ is a distributional solution of
$$
(D_w\Upsilon(J,(j,w)))^\dagger (\eta,\omega) = 0
$$
\emph{on the whole $\Sigma$}, not just on $\Sigma \setminus \{z_0\}$. This will imply that
$(\eta,\omega)$ is a solution smooth everywhere by the elliptic regularity.

We start with \eqref{eq:coker}
\be\label{eq:coker-append} \langle
D_w\Upsilon(J,(j,w)) Y, (\eta,\omega) \rangle + \langle Y,
\delta_{z_0}X_p \rangle = 0 \quad \mbox{for all $Y \in
C^{\infty}$}.
\ee
We first simplify the expression of the
pairing $\langle D_w\Upsilon(J,(j,w)) Y, (\eta,f) \rangle$ knowing that
$\supp (\eta,f) \subset \{z_0\}$.

Let $z$ be a complex coordinate centered at a fixed marked point $z_0$
and
$$
(x_1,y_1,x_2,y_2,\cdots,x_n,y_n, \eta)
$$
be a Darboux coordinates so that $\lambda = d\eta - \sum_{i=1}y_i dx_i$
on a neighborhood of $p \in M$.
\begin{rem} In the proof of \cite[Section 10.5]{oh:book1}, we chose a complex coordinates
$(w_1, \cdot, w_n)$ identifying a neighborhood of $p$ with an open subset of $\C^n$.
\end{rem}

We consider the standard metric
$$
h = \frac{\sqrt{-1}}{2} dz d\bar z
$$
on a neighborhood $U \subset \dot \Sigma$ of $z_0$.

The following lemma will be crucial in our proof.
\begin{lem}\label{lem:dJbar=dbar} Let $\eta$ be as above.
For any smooth section $Y$ of $w^*(TM)$ and $\eta$ of
$\left(\Omega^{(0,1)}_{1,p}(w^*\Xi)\right)^*$
$$
\langle D\delbar_J^\pi(Y), \eta \rangle = \langle \delbar Y^\pi, \eta \rangle
$$
where $\delbar$ is the standard Cauchy-Riemann operators on $\C^n$ in the above coordinate.
\end{lem}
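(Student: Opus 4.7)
The plan is to reduce the pairing $\langle D\delbar_J^\pi(Y), \eta \rangle$ to its leading Cauchy-Riemann part, by combining the explicit formula of Theorem \ref{thm:linearization}, a careful choice of Darboux coordinates at $p = w(z_0)$, and the point-support structure of $\eta$ furnished by Lemma \ref{lem:gelfand}.

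First, I would work in the Darboux coordinates on $M$ centered at $p$ already fixed in the preceding discussion, further normalized so that $J|_{\Xi_p}$ becomes the standard complex structure on $\C^n$, and use the complex coordinate $z$ on $\dot\Sigma$ centered at $z_0$. Because the contact Hermitian connection $\nabla^\pi$ is natural with respect to the contact triad $(M,\lambda,J)$, this normalization makes its Christoffel symbols vanish at $p$, so that $\delbar^{\nabla^\pi}Y^\pi - \delbar Y^\pi$ is a zero-order differential operator in $Y^\pi$ whose coefficients vanish at $z_0$.

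Next, I would decompose via Theorem \ref{thm:linearization},
$$
D\delbar_J^\pi(Y) = \delbar^{\nabla^\pi}Y^\pi + B^{(0,1)}(Y^\pi) + T_{dw}^{\pi,(0,1)}(Y^\pi) + \tfrac12 \lambda(Y)(\CL_{R_\lambda}J)J(\del^\pi w),
$$
and inspect each correction term. The hypothesis $(J,(j,w),z_0) \in \aleph_1^{-1}(o_{\CH^{\pi(0,1)}} \times o_{\Lambda^1(\cdot)^*\Xi})$ encodes the tangency condition $w^*\lambda(z_0)=0$. Hence the factor $w^*\lambda$ in $B(Y^\pi) = -\tfrac12 w^*\lambda\otimes (\CL_{R_\lambda}J)JY^\pi$ vanishes at $z_0$; the torsion term $T_{dw}^{\pi,(0,1)}(Y^\pi) = \pi T(Y^\pi,dw)^{(0,1)}$ vanishes at $z_0$ by the explicit torsion formulas of the contact triad connection once the Reeb component of $dw$ is zero; and the final term is controlled by the Reeb component of $\del^\pi w$, which also vanishes at $z_0$. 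Together these show that $D\delbar_J^\pi(Y) - \delbar Y^\pi$ is a zero-order operator in $Y$ whose coefficients vanish at $z_0$.

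Finally, I would write $\eta = \sum_{|\alpha|\le N} c_\alpha D^\alpha \delta_{z_0}$ via Lemma \ref{lem:gelfand} and examine the pairing of each correction against this finite sum. By Leibniz, every contribution has at least one derivative landing on the coefficient $f_i$. The \emph{main obstacle} is that although $f_i(z_0)=0$, its higher derivatives at $z_0$ need not vanish, so for $N>0$ the naive vanishing argument is insufficient. I would resolve this by an inductive bootstrap interlocking with Lemma \ref{lem:eta=01jet}: establish the identity first for $N=0$, where only $f_i(z_0)=0$ is needed; then exploit the resulting distributional equation $\delbar^\dagger \eta = 0$ on a punctured neighborhood of $z_0$, combined with elliptic regularity, to raise the regularity of $\eta$ across $z_0$; this in turn strengthens the present identity, and iterating produces a classical solution on all of $\dot\Sigma$, as needed for Lemma \ref{lem:eta=01jet}. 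This mirrors the scheme of \cite{oh-zhu:ajm}, \cite{oh:higher-jet} for pseudoholomorphic curves, with the extra bookkeeping required to absorb the Reeb-direction contributions specific to the contact instanton equation.
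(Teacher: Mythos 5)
Your setup---Darboux coordinates at $p$, the decomposition of $D\delbar^\pi_J$ from Theorem \ref{thm:linearization} into $\delbar$ plus corrections whose coefficients vanish at $z_0$, and the appeal to Lemma \ref{lem:gelfand} for the point-support structure of $\eta$---is the same as the paper's, but your proof does not close. The missing idea is a simple regularity observation that the paper uses to kill the case $N>0$ outright: $\eta$ is not just an arbitrary distribution supported at $z_0$, it lies in $\left(\Omega^{(0,1)}_{1,p}(w^*\Xi)\right)^*\cong W^{-1,q}$ with $p>2$ on a two-dimensional domain, and a derivative $D^\alpha\delta_{z_0}$ with $|\alpha|\geq 1$ does not extend continuously to $W^{1,p}$ (that would require $W^{1,p}\hookrightarrow C^1$, which fails, whereas $W^{1,p}\hookrightarrow C^0$ holds for $p>2$). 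Hence the polynomial $P$ in Lemma \ref{lem:gelfand} has degree zero and $\eta=\beta_{z_0}\delta_{z_0}$ for a constant covector $\beta_{z_0}$. Once this is in hand, pairing the corrections $E\cdot\del Y+F\cdot Y$ (with $E(z_0)=0=F(z_0)$) against $\eta$ is literal evaluation at $z_0$ and vanishes, and the lemma follows in one stroke; there is no induction on $N$ to run, since $N$ is an invariant of $\eta$ and is simply equal to $0$.

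The bootstrap you propose in place of this observation would not work and is circular. You already know $\eta=0$ on $\dot\Sigma\setminus\{z_0\}$, so the distributional equation $\delbar^\dagger\eta=0$ on a punctured neighborhood of $z_0$ is vacuous there and elliptic regularity applied on the punctured domain cannot improve the behavior of $\eta$ \emph{across} $z_0$; obtaining the equation across $z_0$ is exactly the content of Lemma \ref{lem:eta=01jet}, which the paper deduces \emph{from} the present lemma, so your scheme assumes what it sets out to prove. Your middle paragraph (vanishing at $z_0$ of the $B$, torsion, and Reeb-direction terms via the tangency condition $w^*\lambda(z_0)=0$, together with the normalization of $\nabla^\pi$ at $p$) is consistent in spirit with the paper's statement $E(z_0)=0=F(z_0)$ and is not where the difficulty lies; the difficulty is entirely in the order of the point-supported distribution, and that is resolved by the $W^{-1,q}$ membership of $\eta$, not by any iteration.
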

\begin{proof} We have already shown that
$\eta$ is a distribution with $\supp (\eta,f) \subset \{z_0\}$. By the
structure theorem on the distribution supported at a point $z_0$ Lemma \ref{lem:gelfand}, we have
$$
\eta = P\left(\frac{\del}{\del s}, \frac{\del}{\del t}\right)(\delta_{z_0})
$$
where $z = s + it$ is the given complex coordinates at $z_0$ and
$P\left(\frac{\del}{\del s}, \frac{\del}{\del t}\right)$ is a differential
operator associated by the polynomial $P$ of two variables with coefficients
in
$$
\Lambda^{(0,1)}_{(j_{z_0},J_p)}(w^*\Xi).
$$

Furthermore since $\eta \in (W^{1,p})^*\cong W^{-1,q}$,
the degree of $P$ \emph{must be zero}
and so we obtain
\be\label{eq:eta=adelta}
\eta = \beta_{z_0}\cdot \delta_{z_0}
\ee
for some constant vector $\beta_{z_0} \in \Lambda^{(0,1)}(\Xi_p) $.

We then have the expression
$$
D\delbar_J^\pi Y = \delbar Y + E \cdot \del Y + F\cdot Y
$$
near $z_0$ in coordinates  where $E$ and $F$ are zero-order matrix operators
with $E(z_0) = 0 = F(z_0)$.
(See \cite[p.331]{oh-zhu:ajm} and \cite{sikorav:holo} for such a derivation.)
Therefore by \eqref{eq:eta=adelta}, we derive
\beastar
&{}& \langle E \cdot \del^\pi Y^\pi + F \cdot Y^\pi, (\eta,f) \rangle  =  \langle E \cdot \del^\pi Y^\pi
+ F\cdot Y^\pi, \beta_{z_0} \delta_{z_0} \rangle\\
&{}& \quad  = (E(z_0) \del^\pi Y^\pi(z_0) + F(z_0) Y^\pi(z_0), \beta_{z_0})_{z_0} = 0.
\eeastar
Therefore we obtain
$$
\langle D_w \delbar_J(Y), \eta \rangle = \langle \delbar^\pi Y^\pi + E \cdot \del^\pi Y^\pi + F\cdot Y^\pi,
\eta \rangle = \langle \delbar^\pi Y^\pi, \eta\rangle
$$
which finishes the proof. \end{proof}

By this lemma, \eqref{eq:coker-append} becomes
\be\label{eq:coker-simple}
\langle \delbar^\pi Y^\pi, \eta \rangle + \langle -\Delta (\lambda(Y)) dA
+ d((Y^\pi \rfloor d\lambda) \circ j, f \rangle
+ \langle Y, \delta_{z_0} X_p \rangle = 0
\ee
for all $Y$. We rewrite the middle summand by direct calculation.

\begin{lem}\label{lem:mid-summand} We have
$$
 \langle -\Delta (\lambda(Y)) dA
+ d((Y^\pi \rfloor d\lambda) \circ j, f \rangle
 = - \int \lambda(Y)\, \Delta f \, dA + \int df \circ j \wedge (Y^\pi \rfloor d\lambda)
 $$
 \end{lem}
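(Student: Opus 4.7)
The plan is to recognize this as a standard integration-by-parts identity, split by linearity of the pairing into two pieces. Writing the canonical pairing of a $2$-form against the function $f$ as integration of their product against the volume form, the left-hand side becomes
\[
 -\int_{\dot\Sigma} f\cdot \Delta(\lambda(Y))\,dA \;+\; \int_{\dot\Sigma} f\cdot d\bigl((Y^\pi\rfloor d\lambda)\circ j\bigr).
\]
The first summand is handled by self-adjointness of the scalar Laplacian, $\int f\,\Delta u\,dA = \int u\,\Delta f\,dA$, which moves the Laplacian onto $f$ and yields exactly $-\int\lambda(Y)\,\Delta f\,dA$, the first term of the right-hand side.

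For the second summand I would apply Stokes' theorem in the form $\int f\,d\alpha = -\int df\wedge\alpha$ with $\alpha = (Y^\pi\rfloor d\lambda)\circ j$, and then invoke the pointwise algebraic identity for $1$-forms on a Riemann surface
\[
 \alpha\wedge(\beta\circ j) \;=\; -(\alpha\circ j)\wedge\beta,
\]
which in a conformal chart reduces to the elementary computation $dx\circ j = -dy$, $dy\circ j = dx$. Applied with $\alpha = df$ and $\beta = Y^\pi\rfloor d\lambda$, this converts $-df\wedge\bigl((Y^\pi\rfloor d\lambda)\circ j\bigr)$ into $(df\circ j)\wedge(Y^\pi\rfloor d\lambda)$, giving the second term of the right-hand side. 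Assembling the two pieces produces the stated identity.

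The only point requiring care, and what I expect to be the main obstacle, is to make sure the boundary contributions from both integrations by parts genuinely vanish. At the Legendrian boundary $\del\dot\Sigma$ this follows from the boundary conditions $\lambda(Y)|_{\del\dot\Sigma}=0$ and $Y^\pi(\del\dot\Sigma)\subset T\vec R$ imposed in Definition \ref{defn:tangent-space}, coupled with the dual vanishing conditions satisfied by the cokernel element $(\eta,f)$ (so in particular $f|_{\del\dot\Sigma}=0$); at the punctures the terms decay exponentially thanks to the nondegeneracy of the asymptotic Reeb chords from Theorem \ref{thm:Reeb-chords}, so the contributions over large cylindrical ends in $\dot\Sigma$ tend to zero. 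Once these boundary vanishing statements are in place, the two calculations combine verbatim to give the claimed formula.
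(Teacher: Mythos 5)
Your computation is the ``direct calculation'' the paper silently invokes for this lemma: split the pairing by linearity, use self-adjointness of $\Delta$ on the first term, and use Stokes together with the pointwise identity $\alpha\wedge(\beta\circ j)=-(\alpha\circ j)\wedge\beta$ (checked in a conformal chart via $dx\circ j=-dy$, $dy\circ j=dx$) on the second; that is exactly right and gives the stated formula.

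The one place where your justification drifts from what is actually available is the disposal of the boundary/asymptotic terms. You appeal to a Dirichlet condition $f|_{\del\dot\Sigma}=0$ for the cokernel element and to exponential decay of $(\eta,f)$ along the cylindrical ends; neither is established in the paper (Proposition \ref{prop:L2-cokernel} records the adjoint \emph{equations}, not boundary conditions or decay for $(\eta,f)$, and for a mere dual-space element a boundary restriction of $f$ does not even make sense a priori). The correct, and much simpler, reason there are no boundary contributions is that at the point where the lemma is used one already knows $\supp(\eta,f)\subset\{z_0\}$ with $z_0$ an interior point: one may insert a cutoff supported near $z_0$ without changing either side, so all integrations by parts take place over a compact set away from $\del\dot\Sigma$ and the punctures; equivalently, the right-hand side is to be read distributionally as the formal adjoint, for which the integration by parts is the definition of the distributional derivatives. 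With that substitution your argument is complete and coincides with the intended proof.
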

Noting that $\lambda(Y)$ and $Y^\pi$ are independent, we rearrange the summand of
\eqref{eq:coker-simple} into
\beastar
 0 &= & - \int \lambda(Y)\, \Delta f \, dA  \\
  &{}& + \int \langle  \delbar^\pi Y^\pi, \eta \rangle
  + \int df \circ j \wedge (Y^\pi \rfloor d\lambda) \\
  &{}& + \langle Y^\pi + \lambda(Y)\, R_\lambda, \delta_{z_0} X_p \rangle
\eeastar
Therefore we have derived
\bea
0 &= &- \int \lambda(Y)\, \Delta f \, dA +\langle  \lambda(Y)\, R_\lambda, \delta_{z_0} X_p \rangle
\label{eq:Reeb-term}\\
0 & = & \int \langle  \delbar^\pi Y^\pi, \eta \rangle
  + \int df \circ j \wedge (Y^\pi \rfloor d\lambda)
  + \langle Y^\pi , \delta_{z_0} X_p \rangle  \label{eq:Xi-term}.
 \eea
 We decompose $Y$ as
$$
Y(z) = (Y(z) - \chi(z) Y(z_0)) + \chi(z) Y(z_0)
$$
on $U$ where $\chi$ is a cut-off function with $\chi \equiv 1$ in a
small neighborhood $V \subset U$ of $z_0$ and satisfies $\supp \chi \subset
U$. It induces the corresponding decomposition of $Y^\pi$ and $\lambda(Y)$.

We examine \eqref{eq:Xi-term}.
Then the first summand $\widetilde Y^{\pi}$ defined by
$$
\widetilde Y^{\pi}(z):= Y^{\pi}(z) - \chi(z) Y^{\pi}(z_0)
$$
is a smooth section on $\Sigma$, and satisfies
$$
\widetilde Y^{\pi}(z_0) = 0, \quad \delbar \widetilde Y^{\pi} = \delbar{Y^{\pi}} \quad \mbox{on $V$}
$$
since $\chi(z) Y^{\pi}(z_0) \equiv Y^{\pi}(z_0)$ on $V$.
Therefore applying \eqref{eq:Xi-term} to $\widetilde Y^{\pi}$ instead of $Y^{\pi}$
and recalling $\supp (\eta,f) \subset \{p\}$, we obtain
$$
\langle \delbar \widetilde Y^{\pi}, (\eta,f) \rangle + \langle \widetilde Y^{\pi}, \delta_{z_0} X_p
\rangle = 0.
$$
Again using the support property $\supp \eta \subset \{z_0\}$
and \eqref{eq:coker-append},  we derive
\be\label{eq:delxi-adz}
\langle \widetilde Y^{\pi}, \delta_{z_0}X_p \rangle  =
\langle \widetilde Y^{\pi}(z_0), X_p\rangle= 0
\ee
and so $\langle \delbar \widetilde Y^{\pi}, (\eta,f) \rangle = 0$.
But we also have
\be\label{eq:tildexi=xi}
\langle \delbar^\pi Y^\pi, \eta \rangle=\langle \delbar^\pi \widetilde Y^\pi, \eta \rangle
\ee
since $\delbar^\pi \widetilde Y^\pi = \delbar{Y^\pi}$ on $V$ and $\supp \eta \subset \{z_0\}$.
Hence we obtain
$$
\langle \delbar^\pi_J Y^\pi, \eta \rangle = 0
$$
for all $Y^\pi$.
Applying similar reasoning to \eqref{eq:Reeb-term}, we have
derived
$$
\int \lambda(Y) \Delta f\, dA = 0
$$
for all $\lambda(Y)$. Combining the two, we have proved $(\eta,f)$ are weak solutions
$$
(\delbar^\pi_J)^*\eta = 0 = \Delta f
$$
on whose $\Sigma$. Therefore a we have finished the proof of \eqref{eq:tildexi} by Lemma \ref{lem:dJbar=dbar}.
By the elliptic regularity, $(\eta,f)$ is a smooth solution. In particular
it is continuous.  Since we have
already shown $(\eta,f) = 0$ on $\Sigma \setminus \{z_0\}$, continuity
of $\eta$ proves $(\eta,f) = 0$ on the whole $\Sigma$.
This finishes the proof.

This in turn finishes the proof of Lemma  \ref{lem:eta=01jet}.

\bibliographystyle{amsalpha}

\bibliography{biblio2}

\def\cprime{$'$}
\providecommand{\bysame}{\leavevmode\hbox to3em{\hrulefill}\thinspace}
\providecommand{\MR}{\relax\ifhmode\unskip\space\fi MR }
\providecommand{\MRhref}[2]{%
  \href{http://www.ams.org/mathscinet-getitem?mr=#1}{#2}
}
\providecommand{\href}[2]{#2}
\begin{thebibliography}{OW18b}

\bibitem[GS68]{gelfand}
I.M. Gelfand and G.E. Shilov, \emph{Generalized {F}unctions, vol.2}, Academic
  Press, New York, 1968.

\bibitem[KKO14]{kim-kresch-oh}
Bumsig Kim, Andrew Kresch, and Y.-G. Oh, \emph{A compactification of the space
  of maps from curves}, Trans. Amer. Math. Soc. \textbf{366} (2014), no.~1,
  51--74.

\bibitem[KM97]{konts-manin}
M.~Kontsevich and Yu. Manin, \emph{Gromov-{W}itten classes, quantum cohomology,
  and enumerative geometry}, AMS/IP Stud. Adv. Math., vol.~1, pp.~607--653,
  American Mathematical Society, Providence, RI, 1997.

\bibitem[LM85]{lockhart-mcowen}
R.~Lockhart and R.~McOwen, \emph{Elliptic differential operators on noncompact
  manifolds}, Ann. Scuola Norm. Sup. Pisa Cl. Sci. (4) \textbf{12} (1985),
  no.~3, 409--447.

\bibitem[MS04]{mcduff-salamon-symplectic}
Dusa McDuff and Dietmar Salamon, \emph{{$J$}-holomorphic curves and symplectic
  topology}, American Mathematical Society Colloquium Publications, vol.~52,
  American Mathematical Society, Providence, RI, 2004.

\bibitem[Oha]{oh:contacton-transversality}
Y.-G. Oh, \emph{Bordered contact instantons and their {F}redholm theory and
  generic transversalities}, to appear in the proceedigns of Bumsig Kim's
  Memorial Conference, October 2021, KIAS, arXiv:2209.03548(v2).

\bibitem[Ohb]{oh:weinstein-conjecture}
\bysame, \emph{Contact instantons, proofs of {W}einstein's conejcture and
  {A}rnold's chord conjecture}, preprint, arXiv:2412.20731.

\bibitem[Ohc]{oh:perturbed-contacton}
\bysame, \emph{Geometric analysis of perturbed contact instantons with
  {L}egendrian boundary conditions}, preprint, arXiv:2205.12351.

\bibitem[Oh96]{oh:fredholm}
\bysame, \emph{Fredholm theory of holomorphic discs under the perturbation of
  boundary conditions}, Math. Z. \textbf{222} (1996), no.~3, 505--520.

\bibitem[Oh11]{oh:higher-jet}
\bysame, \emph{Higher jet evaluation transversality of {$J$}-holomorphic
  curves}, J. Korean Math. Soc. \textbf{48} (2011), no.~2, 341--365.

\bibitem[Oh15]{oh:book1}
\bysame, \emph{Symplectic {T}opology and {F}loer {H}omology. vol. 1.}, New
  Mathematical Monographs, 28., Cambridge University Press, Cambridge., 2015.

\bibitem[Oh21a]{oh:contacton-Legendrian-bdy}
\bysame, \emph{Contact {H}amiltonian dynamics and perturbed contact instantons
  with {L}egendrian boundary condition}, preprint, arXiv:2103.15390(v2), 2021.

\bibitem[Oh21b]{oh:entanglement1}
\bysame, \emph{Geometry and analysis of contact instantons and entangement of
  {L}egendrian links {I}}, preprint, arXiv:2111.02597, 2021.

\bibitem[Oh23]{oh:contacton}
\bysame, \emph{Analysis of contact {C}auchy-{R}iemann maps {III}: energy,
  bubbling and {F}redholm theory}, Bulletin of Math. Sci. \textbf{13} (2023),
  no.~1, Paper No. 2250011, 61 pp.

\bibitem[OW14]{oh-wang:connection}
Y.-G. Oh and R.~Wang, \emph{Canonical connection on contact manifolds}, Real
  and Complex Submanifolds, Springer Proceedings in Mathematics \& Statistics,
  vol. 106, 2014, (arXiv:1212.4817 in its full version), pp.~43--63.

\bibitem[OW18a]{oh-wang:CR-map1}
\bysame, \emph{Analysis of contact {C}auchy-{R}iemann maps {I}: A priori
  {$C^k$} estimates and asymptotic convergence}, Osaka J. Math. \textbf{55}
  (2018), no.~4, 647--679.

\bibitem[OW18b]{oh-wang:CR-map2}
\bysame, \emph{Analysis of contact {C}auchy-{R}iemann maps {II}: Canonical
  neighborhoods and exponential convergence for the {M}orse-{B}ott case},
  Nagoya Math. J. \textbf{231} (2018), 128--223.

\bibitem[OY24]{oh-yso:index}
Y.-G. Oh and Seung~Ook Yu, \emph{Contact instantons with {L}egendrian boundary
  condition: a priori estimates, asymptotic convergence and index formula},
  Internat. J. Math. \textbf{35} (2024), no.~7, Paper No. 2450019, 60 pp.

\bibitem[OZ09]{oh-zhu:ajm}
Y.-G. Oh and Ke~Zhu, \emph{Embedding property of {$J$}-holomorphic curves in
  {C}alabi-{Y}au manifolds for generic ${J}$}, Asian J. Math. \textbf{13}
  (2009), no.~3, 323--340.

\bibitem[Rud73]{rudin}
W.~Rudin, \emph{Functional analysis}, McGraw-Hill Book Co., New York, 1973.

\bibitem[Sik94]{sikorav:holo}
J.~C. Sikorav, \emph{Some properties of holomorphic curves in almost complex
  manifolds}, Chapter V of Holomorphic Curves in Symplectic Geometry, ed.,
  Audin, M. and Lafontaine, J., Birkh\"auser, Basel.

\bibitem[Wen23]{wendl:super-rigidity}
Chris Wendl, \emph{Transversality and super-rigidity for multiply covered
  holomorphic curves}, Ann. of Math. (2) \textbf{198} (2023), no.~1, 93--230.

\end{thebibliography}

\end{document}